\numberwithin{equation}{section} \numberwithin{figure}{section}
\numberwithin{table}{section}
\newtheorem{theorem}{Theorem}[section]
\newtheorem{thm}[theorem]{Theorem}
\newtheorem{proposition}[theorem]{Proposition}
\newtheorem{lemma}[theorem]{Lemma}
\newtheorem{corollary}[theorem]{Corollary}
\newtheorem{prop}[theorem]{Proposition}
\newtheorem{cor}[theorem]{Corollary}
\theoremstyle{definition}
\newtheorem{definition}[theorem]{Definition}
\newtheorem{remark}[theorem]{Remark}
\DeclareMathOperator{\sys}{{\rm sys}}
\newcommand\M {\operatorname{M}}
\newcommand\Nr {\textrm {Nr}}
\newcommand\Norm {\textrm{N}}
\newcommand\Tr {\textrm {Tr}}
\newcommand\bb {\mathbb}
\newcommand\mcal{\mathcal}
\newcommand\area{\textrm{area}}
\newcommand\qb{{\mathcal Q}_B}
\newcommand\QQ{{\mathcal Q}}
\newcommand\Q{{\mathbb{Q}}}
\newcommand\R{{\mathbb{R}}}
\newcommand\Z{{\mathbb{Z}}}
\newcommand\F{{\mathbb{F}}}
\newcommand\HH{{\mathcal{H}}}
\renewcommand\O{{\mcal{O}}}
\long\def\forget#1\forgotten{{}}
\newcommand\sub{{\,\subseteq\,}}
\newcommand\ideal[1]{{\left<#1\right>}}
\newcommand\Tref[1]{{Theorem~\ref{#1}}}
\newcommand\Lref[1]{{Lemma~\ref{#1}}}
\newcommand\Rref[1]{{Remark~\ref{#1}}}
\newcommand\Pref[1]{{Proposition~\ref{#1}}}
\newcommand\Cref[1]{{Corollary~\ref{#1}}}
\newcommand\Sref[1]{{Section~\ref{#1}}}
\newcommand\eq[1]{{(\ref{#1})}}
\newcommand\GL[1][2]{{\operatorname{GL}_{#1}}}
\newcommand\SL[1][2]{{\operatorname{SL}_{#1}}}
\newcommand\PGL[1][2]{{\operatorname{PGL}_{#1}}}
\newcommand\PSL[1][2]{{\operatorname{PSL}_{#1}}}
\newcommand\co{{\,{:}\,}}
\def\normali{{\lhd}} % triangle without the - sign: for ideals.
\newcommand\disc{{\operatorname{disc}}}
\newcommand\tr{{\operatorname{tr}}} % the reduced trace
\newcommand\dimcol[2]{{[{#1}\!:\!{#2}]}} % Produces nicely spaced [K:F].
\newcommand\divides{{\,|\,}}
\newcommand\lam{{\lambda}}
\newcommand\mul[1]{{#1^{\times}}}
\newcommand\sg[1]{{\left<{#1}\right>}}
\newcommand\set[1]{{\left\{{#1}\right\}}}
\newcommand\ra{{\rightarrow}}
\def\s{{\sigma}}
\newcommand\sqb[1][]{{\mathbb{P}\qb^{1}\if!#1!{}\else{(#1)}\fi}}
\newcommand\smat[4]{{\left(\begin{array}{cc}#1 & #2 \\ #3 & #4\end{array}\right)}}
\newcommand\abs[1]{{\left|#1\right|}}
\newcommand\magma{{\texttt{magma}}}
\begin{document}

\author{Karin Katz}\author{Mikhail Katz}\author{Michael
M.~Schein}\author{Uzi Vishne}

\address{Department of Mathematics, Bar Ilan University}

\title[Bolza quaternion order and systoles] {Bolza quaternion order
and asymptotics of systoles along congruence subgroups}

\keywords{arithmetic Fuchsian group, hyperbolic surface, invariant
trace field, order, principal congruence subgroup, quaternion algebra,
systole, totally real field, triangle group, Bolza curve}

\maketitle

%\footnotesize{}

\begin{abstract}
We give a detailed description of the arithmetic Fuchsian group of the
Bolza surface and the associated quaternion order.  This description
enables us to show that the corresponding principal congruence covers
satisfy the bound $\text{sys}(X) > \frac{4}{3}\log g(X)$ on the
systole, where $g$ is the genus.  We also exhibit the Bolza group as a
congruence subgroup, and calculate out a few examples of ``Bolza
twins'' (using \magma{}).  Like the Hurwitz triplets, these correspond
to the factoring of certain rational primes in the ring of integers of
the invariant trace field of the surface.  We exploit random sampling
combined with the Reidemeister-Schreier algorithm as implemented in
\magma{} to generate these surfaces.
\end{abstract}

\tableofcontents

\section{Introduction}

This article pursues several related goals.  First, we seek to clarify
the algebraic underpinnings of the celebrated Bolza curve which turn
out to be more involved than those of the celebrated Klein quartic.
Furthermore, we seek to provide explicit algebraic foundations, in
terms of a quaternion algebra, for calculating out examples of Riemann
surfaces with particularly high systole corresponding to principal
congruence subgroups in the Bolza order.  In an effort to make the
text intelligible to both algebraists and differential geometers, we
sometimes give detailed proofs that could have been shortened if
addressed to a specific expert audience.

In 2007, Katz, Schaps and Vishne \cite{KSV06a} proved a lower bound
for the systole of certain arithmetic Riemann surfaces, improving
earlier results by Buser and Sarnak (1994 \cite[p.~44]{BS94}).
Particularly sharp results were obtained in \cite{KSV06a} and
\cite{KSV2} for Hurwitz surfaces, namely Riemann surfaces with an
automorphism group of the highest possible order in terms of the
genus~$g$, yielding a lower bound
\begin{equation}
\label{43}
\text{sys}(X_g) > \frac{4}{3}\log g
\end{equation}
for principal congruence subgroups corresponding to a suitable Hurwitz
quaternion order defined over $\Q(\cos\frac{2\pi}{7})$.

Makisumi (2013 \cite{Ma}) proved that the multiplicative
constant~$\frac{4}{3}$ in the bound~\eqref{43} is the best possible
asymptotic value for congruence subgroups of arithmetic Fuchsian
groups.  Schmutz Schaller (1998 \cite[Conjecture~1(i), p.~198]{Sc98})
conjectured that a $4/3$ bound is the best possible among all
hyperbolic surfaces.  Additional examples of surfaces whose systoles
are close to the bound
%\footnote{Is this right?}
were recently constructed by Akrout \& Muetzel (2013 \cite{AM13a},
\cite{AM13b}).  The foundations of the subject were established by
Vinberg (1967 \cite{Vin}).

We seek to extend the bound~\eqref{43} to the case of the family of
Riemann surfaces defined by principal congruence subgroups of the
$(3,3,4)$ triangle group corresponding to a quaternion order defined
over~$\Q(\sqrt{2})$, which is closely related to the Bolza surface.

The Fuchsian group of the Bolza surface, which we henceforth
denote~$B$, is arithmetic, being a subgroup of the group of units,
modulo~$\set{\pm 1}$, in an order of the quaternion algebra%
\begin{equation}
\label{12}
D_B = (-3,\sqrt{2}) = K[i,j \,|\, i^2 = -3,\, j^2 = \sqrt{2},\,ji=-ij]
\end{equation}
over the base field $K = \Q(\sqrt{2})$.  The splitting pattern of this
algebra is determined in \Sref{sec:4}.  Let $O_K = \Z[\sqrt{2}]$ be
the ring of integers of~$K$.  This is a principal ideal domain, so
irreducible elements of $O_K$ are prime.

\begin{lemma}
\label{intro1}
The standard order
\[
\operatorname{span}_{O_K}\set{1,i,j,ij}
\]
in the algebra $D_B$ is contained in precisely two maximal orders
$\QQ$ and~$\QQ'$, which are conjugate to each other.
\end{lemma}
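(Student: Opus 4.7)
The proof would proceed by local-global analysis, reducing the global count of maximal orders containing $\Lambda := \operatorname{span}_{O_K}\set{1,i,j,ij}$ to a product over primes of local counts. Indeed, a maximal order $\QQ$ of $D_B$ containing $\Lambda$ is determined by its completions $\QQ_\mathfrak{p}$ at each prime $\mathfrak{p}$ of $O_K$; and wherever $\Lambda_\mathfrak{p}$ is already maximal, the completion is forced to equal $\Lambda_\mathfrak{p}$. Thus only the finitely many \emph{bad} primes at which $\Lambda_\mathfrak{p}$ is non-maximal contribute to the count.

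First, I would compute the reduced discriminant of $\Lambda$. On the standard basis the Gram matrix of the reduced trace pairing is diagonal with entries $2,\,2i^2,\,2j^2,\,-2i^2j^2$, so $\disc(\Lambda) = (12\sqrt 2)^2$ as an $O_K$-ideal, and the reduced discriminant equals $(12\sqrt 2) = \mathfrak{p}_2^{5}\mathfrak{p}_3$, with $\mathfrak{p}_2 = (\sqrt 2)$ the unique prime above $2$ and $\mathfrak{p}_3 = (3)$, which is prime because $2$ is not a square modulo $3$. So the bad primes lie in $\set{\mathfrak{p}_2,\mathfrak{p}_3}$. Combined with the splitting pattern of $D_B$ determined in \Sref{sec:4}, which I anticipate shows $D_B$ ramified exactly at $\mathfrak{p}_2$ and at one of the two real places of $K$, one concludes that $(D_B)_{\mathfrak{p}_2}$ is a division algebra with a unique local maximal order; this contains $\Lambda_{\mathfrak{p}_2}$ automatically and contributes a local factor of $1$.

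The substantive step is the analysis at $\mathfrak{p}_3$: here $(D_B)_{\mathfrak{p}_3} \cong \M_2(K_{\mathfrak{p}_3})$ is split and $\operatorname{rd}(\Lambda_{\mathfrak{p}_3}) = \mathfrak{p}_3$, exactly the reduced discriminant of an Eichler order of level $\mathfrak{p}_3$. I would make the splitting explicit by Hensel-lifting a square root of $-3$ from the residue field $O_K/\mathfrak{p}_3 \cong \F_9$ (possible because the residue characteristic is odd), write down the matrix images of $i$ and $j$, and identify $\Lambda_{\mathfrak{p}_3}$ with the standard Eichler order of level $\mathfrak{p}_3$ in $\M_2(K_{\mathfrak{p}_3})$. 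Any Eichler order of squarefree level is contained in precisely two maximal orders, namely the two adjacent vertices of the Bruhat--Tits tree whose intersection it is. Thus the local count at $\mathfrak{p}_3$ is $2$, the global count is $1 \cdot 2 = 2$, and we obtain the two maximal orders $\QQ$ and $\QQ'$.

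For the conjugacy assertion, the natural candidate is $\alpha = i \in \mul{D_B}$. Its reduced norm is $-i^2 = 3$, with $\mathfrak{p}_3$-valuation $1$ and valuation $0$ at every other prime. Hence $i$ lies in the local normalizer of $\Lambda_{\mathfrak{p}_3}$ but outside its unit group, and therefore implements the Atkin--Lehner involution at $\mathfrak{p}_3$ that interchanges the two local vertices; at every other prime $i$ is a local unit and fixes the unique completion. Consequently $i\,\QQ\,i^{-1} = \QQ'$ globally. The main obstacle, and the computational heart of the argument, is the explicit identification in the third paragraph: producing a closed-form splitting $(D_B)_{\mathfrak{p}_3} \cong \M_2(K_{\mathfrak{p}_3})$ adapted to the basis $\set{1,i,j,ij}$ and verifying directly that the image of $\Lambda_{\mathfrak{p}_3}$ is the prescribed Eichler order. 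While the matched reduced discriminants already constrain the type very tightly, turning this into an explicit embedding is where the "precisely two" part of the statement must be earned.
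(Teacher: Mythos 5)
Your strategy is sound and genuinely different from the paper's. The paper works entirely globally and by hand: it adjoins $\alpha=\frac12(1+i)$ to get an intermediate order $\O_1=O_K[\alpha,j]$ of discriminant $3\sqrt2$, writes down an explicit element $\gamma$ with $\QQ=O_K+O_K\alpha+O_K\gamma+O_K\alpha\gamma$, checks closure under multiplication and maximality by computing the discriminant $\sqrt2$, sets $\QQ'=i\QQ i^{-1}$, and then proves exhaustiveness by a bare-hands congruence analysis: any $y$ such that $\O[y]$ is an order is written with denominators $2,3,\sqrt2$, integrality of traces and norms forces congruences on the coordinates, and a sign $\theta=\pm1$ appearing in the mod-$3$ condition places $y$ in $\QQ$ or in $\QQ'$. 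Your local--global count (unique maximal order at the ramified prime $(\sqrt2)$, an Eichler order of level $(3)$ at the split bad prime, hence exactly two vertices of the Bruhat--Tits tree, with $i$ acting as the Atkin--Lehner swap because $v_{(3)}(\Nr(i))=1$ is odd while $i$ normalizes the standard order) explains structurally \emph{why} the answer is two, and your conjugating element agrees with the paper's. What it does not deliver, and what the paper's computation does, is the explicit global model of $\QQ$ inside $D_B$, which the rest of the paper uses heavily (Sections~\ref{sec:6}--\ref{sec:13}); for the lemma as stated, however, that is not required.

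There is one concrete error, and it sits exactly in the step you yourself identify as the crux. You propose to split $(D_B)_{(3)}$ by ``Hensel-lifting a square root of $-3$ from the residue field $\F_9$.'' This cannot work: $-3$ is a uniformizer at $(3)$ (its image in $\F_9$ is $0$), so Hensel's lemma does not apply, and $-3$ is not a square in $K_{(3)}$ at all since its valuation is odd. The element you should diagonalize is $j$, not $i$: $\sqrt2$ is a unit at $(3)$ and a square in $\F_9$ --- indeed $(1-\sqrt2)^2\equiv\sqrt2\pmod 3$, the same computation as in Proposition~\ref{42a} --- so Hensel gives $t\in O_{(3)}$ with $t^2=\sqrt2$, and the map $j\mapsto\smat{t}{0}{0}{-t}$, $i\mapsto\smat{0}{1}{-3}{0}$ identifies $\Lambda_{(3)}$ with a conjugate of the standard Eichler order $\smat{O_{(3)}}{O_{(3)}}{3O_{(3)}}{O_{(3)}}$ of level $(3)$. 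Alternatively you can avoid any explicit splitting by citing the standard fact that a local order with squarefree reduced discriminant is hereditary, hence in the split case an Eichler order of level $\mathfrak p$, which is the intersection of exactly two maximal orders; either repair completes your argument.
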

We will prove \Lref{intro1} in \Sref{sec:5}.  This lemma is a
workhorse result used in the analysis of maximal orders below.  We let
$\qb = \QQ$.

\begin{theorem}
\label{intro2}
Almost all principal congruence subgroups of the maximal order $\qb$
satisfy the systolic bound~\eqref{43}.
\end{theorem}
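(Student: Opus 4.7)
The plan is to adapt the approach of \cite{KSV06a} to the present quaternion algebra. For each ideal $I \sub O_K$ coprime to the primes ramifying in $D_B$, set
\[
\qb^1(I) = \set{\gamma \in \qb^1 \co \gamma \equiv 1 \pmod{I \cdot \qb}}
\]
and let $X_I = \HH/\overline{\qb^1(I)}$, where the overline denotes the image modulo $\set{\pm 1}$. I would estimate $\sys(X_I)$ from below and $g(X_I)$ from above, both in terms of $\Norm(I)$, and then combine.

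For the systole, I would use $\ell(\gamma) = 2\operatorname{arccosh}(|\tr(\gamma)|/2)$ for any hyperbolic $\gamma$. Writing $\gamma = 1 + \alpha$ with $\alpha \in I \cdot \qb$, the relation $\gamma \bar\gamma = 1$ yields $\tr(\alpha) + \Nr(\alpha) = 0$; since $\alpha\bar\alpha \in I^2 \cdot \qb \cap O_K = I^2$, this gives $\tr(\gamma) - 2 \in I^2$. The non-identity embedding $\s \co \sqrt{2} \mapsto -\sqrt{2}$ sends $D_B$ to $(-3,-\sqrt{2})_\R$, which is the Hamilton quaternions, so $\s(\qb^1) \sub \mathrm{SU}(2)$ and $|\s(\tr \gamma)| \le 2$. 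For hyperbolic $\gamma$, $\tr(\gamma) - 2$ is a nonzero element of $O_K$ lying in $I^2$, so $|\tr\gamma-2| \cdot |\s(\tr\gamma-2)| \ge \Norm(I)^2$; combined with $|\s(\tr\gamma - 2)| \le 4$ this yields $|\tr\gamma - 2| \ge \Norm(I)^2/4$ and hence
\[
\sys(X_I) \ge 4\log \Norm(I) - 4\log 2 + o(1).
\]

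For the genus, Gauss--Bonnet gives $4\pi(g(X_I)-1) = \dimcol{\overline{\qb^1}}{\overline{\qb^1(I)}} \cdot \area(\HH/\overline{\qb^1})$. Strong approximation, applied away from the ramification locus of $D_B$, identifies $\qb^1/\qb^1(I)$ with a subgroup of $\SL(O_K/I)$, so the index is bounded by $\card{\SL(O_K/I)} \le \Norm(I)^3$. This yields $\log g(X_I) \le 3\log \Norm(I) + \log(\area(\HH/\overline{\qb^1})/(4\pi)) + o(1)$, and combining with the systole bound,
\[
\sys(X_I) - \tfrac{4}{3}\log g(X_I) \ge -4\log 2 - \tfrac{4}{3}\log\bigl(\area(\HH/\overline{\qb^1})/(4\pi)\bigr) + o(1).
\]

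The main obstacle is verifying that the right-hand side is strictly positive, equivalently that $\area(\HH/\overline{\qb^1}) < \pi/2$. I would compute the covolume of $\qb^1$ explicitly via Borel's formula, using \Lref{intro1} and the preceding description of $\qb$; since the quotient is closely related to the $(3,3,4)$-triangle group of area $\pi/12$, the inequality should hold with room to spare. Once it does, $\sys(X_I) > \tfrac{4}{3}\log g(X_I)$ holds for every $I$ whose norm exceeds an effective threshold, and the ``almost all'' in the theorem refers to the finite set of exceptional ideals (those below the threshold or meeting the ramification locus of $D_B$).
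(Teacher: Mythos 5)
Your route is essentially the paper's: both run the machinery of \cite{KSV06a}, namely (i) the trace bound for $1\neq\gamma\in\qb^1(I)$ obtained from the two real places (the algebra is Hamiltonian under $\s$, so $\abs{\s(\tr\gamma)}\le 2$ and the norm of $\tr\gamma-2\in I^2$ gives $\abs{\tr\gamma}\ge\tfrac14\Norm(I)^2-2$, which is exactly the paper's inequality \eqref{81c} with $d=2$); (ii) the index bound $\dimcol{\qb^1}{\qb^1(I)}\lesssim\Norm(I)^3$ plus Gauss--Bonnet; and (iii) a comparison of constants that in your normalization reads $\area(X_1)<\pi/2$ and in the paper's reads $2^{3(d-1)}\Lambda_{D_B,\qb}=12<24=4\pi/\area(X_1)$. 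The area input you defer ($\area(X_1)=\pi/6$, coming from the identification of $\qb^1/\set{\pm1}$ with $\Delta_{(3,3,4)}$) is precisely what the paper supplies in Section~\ref{sec:7} and uses in Section~\ref{sec:8}; your plan to get it from Borel's volume formula or the triangle-group relation is sound, though not carried out.

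The genuine gap is your treatment of the ramified prime. You define the congruence subgroups only for ideals $I$ coprime to the ramification locus of $D_B$, and in your last sentence you assert that the excluded ideals form a finite set. They do not: every ideal divisible by $(\sqrt{2})$ is excluded, and there are infinitely many of these (for instance $(\sqrt{2})^k$ for all $k$, or $(\sqrt{2})J$ for every ideal $J$), hence infinitely many principal congruence subgroups of $\qb$ about which your argument says nothing. Since ``almost all'' here means all but finitely many, what you prove is strictly weaker than the theorem. Note that your trace bound needs no coprimality assumption; the only step that breaks is the index bound, because when $(\sqrt{2})\divides I$ the local quotient at that place is not $\SL$ of a residue ring but the norm-one group of the maximal order in the ramified local quaternion algebra. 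One needs $\dimcol{\qb^1}{\qb^1(I)}\le\Lambda\,\Norm(I)^3$ with a controlled constant valid for all $I$; this is exactly the role of the constant $\Lambda_{D,Q}$ of \cite{KSV06a} invoked in the paper, which equals $1+\tfrac12=\tfrac32$ here since $\qb$ is maximal and $(\sqrt{2})$ is the unique finite ramified place. The repair costs you a factor $\tfrac32$ in the genus estimate, i.e.\ your criterion $\area(X_1)<\pi/2$ tightens to $\area(X_1)<\pi/3$, which the actual value $\pi/6$ still satisfies; so the gap is fixable within your framework, but as written the ``almost all'' claim is not established.
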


This is proved in \Sref{sec:8}, where a more detailed version of the
result is given.  In fact, Theorem~\ref{intro2} is a consequence of the
following more general result. For an order $Q$ in a quaternion algebra $D$, let $Q^1$ be the group of units of $Q$ and let $d$ be the dimension over $\Q$ of the center of $Q$.  We define a constant $\Lambda_{D,Q} \geq 1$ depending on the local ramification pattern (see Section~\ref{sec:8}). Let $X_1$ be the quotient of the hyperbolic plane $\mcal H^2$ modulo the action of $Q^1$.

\begin{proposition}
%\label{81b}
Suppose $2^{3(d{{-1}})}\Lambda_{D,Q} < \frac{4\pi}{\area(X_1)}$.
Then almost all the principal congruence covers of $X_1$
satisfy the bound $\sys > \frac{4}{3}\log g.$%
\end{proposition}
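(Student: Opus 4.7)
The plan is to bound the systole of each principal congruence cover $X_I = \mcal{H}^2/Q^1(I)$ from below by an arithmetic argument, and then compare with $\log g(X_I)$ via Gauss--Bonnet. The hypothesis $2^{3(d-1)}\Lambda_{D,Q} < 4\pi/\area(X_1)$ is calibrated exactly so that the arithmetic bound eventually beats the $(4/3)\log g$ target, yielding the conclusion for all but finitely many $I$.

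For the systole lower bound, fix a two-sided ideal $I \lhd Q$, set $J = I \cap O_K$ so that $I = JQ$, and let $\gamma \in Q^1(I) \setminus \{\pm 1\}$ realize the systole. Write $\gamma = 1 + x$ with $x \in I$; from $\Nr(\gamma) = 1$ one obtains $\tr(\gamma) - 2 = -\Nr(x)$. Expanding $\Nr(x)$ via the bilinear form associated to the reduced norm shows $\Nr(x) \in J^2$, so $\tr(\gamma) - 2$ is a nonzero element of $J^2$, and therefore $|N_{K/\Q}(\tr(\gamma) - 2)| \geq N(J)^2$. To estimate the left-hand side place by place, note that at the identity archimedean place $|\tr(\gamma) - 2| = 2\cosh(\sys(X_I)/2) - 2$; at each of the remaining $d-1$ archimedean places the algebra $D$ is ramified, so $(Q^1)_\sigma$ sits inside the compact group of norm-one Hamilton quaternions and $|\tr_\sigma(\gamma)| \leq 2$, giving $|\tr_\sigma(\gamma) - 2| \leq 4$. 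Combining,
\[
\bigl(2\cosh(\sys(X_I)/2) - 2\bigr)\cdot 4^{d-1} \;\geq\; N(J)^2,
\]
which rearranges to $\sys(X_I) \geq 4\log N(J) - 4(d-1)\log 2 + o(1)$ as $N(J) \to \infty$.

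For the comparison with genus, Gauss--Bonnet gives $4\pi(g(X_I) - 1) = \area(X_I) = [Q^1 : Q^1(I)] \cdot \area(X_1)$. Strong approximation allows the index to be computed locally: the generic contribution at an unramified prime with residue field of size $q$ is $|\PSL(\F_q)| \sim q^3$, while the finitely many primes at which $D$ ramifies or $Q$ fails to be locally maximal produce bounded corrections absorbed into the constant $\Lambda_{D,Q}$. Thus $[Q^1 : Q^1(I)] \sim \Lambda_{D,Q} \cdot N(J)^3$, whence
\[
\log g(X_I) = 3\log N(J) + \log\!\bigl(\Lambda_{D,Q}\,\area(X_1)/(4\pi)\bigr) + o(1).
\]
Substituting the genus asymptotic into the systole lower bound, the terms $4\log N(J)$ and $(4/3)\cdot 3\log N(J)$ cancel, and the inequality $\sys(X_I) > (4/3)\log g(X_I)$ reduces to $-4(d-1)\log 2 > (4/3)\log(\Lambda_{D,Q}\area(X_1)/(4\pi))$, which is equivalent to the hypothesis $2^{3(d-1)}\Lambda_{D,Q} < 4\pi/\area(X_1)$. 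Therefore the inequality holds for all $I$ with $N(J)$ sufficiently large, i.e.\ for almost all principal congruence covers.

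The main obstacle is the precise determination of $\Lambda_{D,Q}$, which requires careful local analysis of the index $[Q^1_{\mathfrak{p}} : Q^1(I)_{\mathfrak{p}}]$ at each prime $\mathfrak{p}$ of $O_K$, and especially at primes dividing the discriminant of $D$ or where $Q$ is not locally maximal; it is here that the numerical content of the hypothesis is actually pinned down. A secondary technical matter is verifying rigorously that at each non-identity archimedean place the units $(Q^1)_\sigma$ really are constrained to a compact group, which uses that $K$ is totally real and that $D$ is ramified at all non-identity real places, as determined in \Sref{sec:4} for the Bolza case.
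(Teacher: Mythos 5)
Your proposal is correct and follows essentially the same route as the paper's proof: a trace/norm lower bound for nontrivial elements of $Q^1(I)$ (which the paper simply imports from \cite{KSV06a}, Equation~(2.5), rather than re-deriving as you sketch) combined with the index bound $[Q^1:Q^1(I)]\leq\Lambda_{D,Q}\Norm(I)^3$ and Gauss--Bonnet, the hypothesis $2^{3(d-1)}\Lambda_{D,Q}<\frac{4\pi}{\area(X_1)}$ being exactly the condition under which the leading terms compare favorably for large $\Norm(I)$. Two cosmetic caveats: $\Lambda_{D,Q}$ is defined so that $\Lambda_{D,Q}\Norm(I)^3$ is an \emph{upper} bound on the index for every $I$ (not an asymptotic equality, as your ``$\sim$'' and the ensuing equality for $\log g$ suggest), and at the identity place one should allow $\tr(\gamma)<0$, so $|\tr(\gamma)-2|\leq 2\cosh(\sys(X_I)/2)+2$; neither affects the conclusion, since only those directions of the inequalities are actually used.
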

Note that this is stronger than the Buser--Sarnak bound $\sys
X(\Gamma)>\frac{4}{3} \log g(X(\Gamma)) - c(\Gamma_0)$ where the
constant $c(\Gamma_0)$ could be arbitrarily large.  Returning to the
Bolza order, we have the following result.

\begin{theorem}
\label{intro3}
There are elements $\alpha$ and $\beta$ of norm $1$ in the algebra $D_B$
of~\eqref{12} such that $\qb = O_K[\alpha,\beta]$ as an order.  Let $\qb^1 = \sg{\alpha,\beta}$ be the group generated by $\alpha$ and $\beta$.  Then $\qb^1 / \set{\pm 1}$
is isomorphic to the triangle group~$\Delta_{(3,3,4)}$.
\end{theorem}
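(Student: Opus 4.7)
\emph{Plan.} My approach is to exhibit $\alpha$ and $\beta$ explicitly, verify the triangle relations by direct calculation, show that $O_K[\alpha, \beta] = \qb$ by a basis-and-discriminant argument, and identify $\sg{\alpha, \beta}/\set{\pm 1}$ with $\Delta_{(3,3,4)}$ via a covolume comparison.

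\emph{Choice of generators.} I take $\alpha = (1+i)/2$; since $i^2 = -3$, one has $\tr(\alpha) = 1$ and $\Nr(\alpha) = 1$, whence $\alpha^2 = \alpha - 1$ and $\alpha^3 = -1$. For $\beta$ I seek a norm-one, trace-one element of $D_B$ for which $\tr(\alpha\beta) = \pm\sqrt{2}$, ensuring $(\alpha\beta)^4 = -1$. Writing $\beta = (u + vi + wj + xij)/6$ with $u, v, w, x \in O_K$, the conditions $\tr(\beta) = 1$ and $\tr(\alpha\beta) = -\sqrt{2}$ force $u = 3$ and $v = 1 + 2\sqrt{2}$; the norm condition then reduces to $w^2 + 3x^2 = 12$, solved by $(w, x) = (3, 1)$. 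This yields
\[
\beta \;=\; \frac{3 + (1 + 2\sqrt{2})\,i + 3\,j + ij}{6},
\]
and direct verification gives $\tr(\beta) = \Nr(\beta) = 1$ (so $\beta^3 = -1$) together with $\tr(\alpha\beta) = -\sqrt{2}$, $\Nr(\alpha\beta) = 1$ (so $(\alpha\beta)^4 = -1$). Modulo $\set{\pm 1}$ the defining relations of $\Delta_{(3,3,4)} = \sg{X, Y \mid X^3 = Y^3 = (XY)^4 = 1}$ hold, producing a surjection $\Delta_{(3,3,4)} \twoheadrightarrow \sg{\alpha, \beta}/\set{\pm 1}$.

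\emph{Recovering $\qb$.} From $i = 2\alpha - 1$ we have $i \in O_K[\alpha]$. A direct product calculation yields
\[
\alpha\beta - \beta\alpha \;=\; (\alpha - 1)\,j;
\]
since $\Nr(\alpha - 1) = 1$ gives $(\alpha - 1)^{-1} = -\alpha$, this recovers $j = \beta - \alpha\beta + \alpha\beta\alpha \in O_K[\alpha, \beta]$, and consequently $ij = (2\alpha - 1)j$ also lies in $O_K[\alpha, \beta]$. Thus $O_K[\alpha, \beta]$ contains the standard order. Changing to the $O_K$-basis $\set{1, \alpha, j, \alpha j}$, the element $\beta$ acquires denominator $3$; one verifies that the $O_K$-lattice $O_K\sg{1, \alpha, j, \alpha j} + O_K\,\beta$ is closed under multiplication, and computes its reduced discriminant to be $(\sqrt{2})$, matching the discriminant of the maximal order as determined by the local ramification analysis of $D_B$ in \Sref{sec:4}. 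By \Lref{intro1}, this lattice equals $\qb$.

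\emph{Identification with $\Delta_{(3,3,4)}$.} The real embedding of $D_B$ at the place of $K$ where $\sqrt{2} > 0$ realizes $\qb^1/\set{\pm 1}$ as a discrete cocompact subgroup of $\PSL(\R)$. Borel's volume formula, using $\zeta_{\Q(\sqrt{2})}(-1) = 1/12$ and finite-ramification contribution $N(\sqrt{2}) - 1 = 1$, yields covolume $\pi/6$, which matches the area of the $(3,3,4)$-orbifold. Since the images of $\alpha, \beta, \alpha\beta$ in $\PSL(\R)$ are non-scalar of respective orders $3, 3, 4$, the surjection $\Delta_{(3,3,4)} \twoheadrightarrow \sg{\alpha,\beta}/\set{\pm 1}$ is forced to be an isomorphism: by the uniqueness of the $(3,3,4)$-triangle in $\mcal H^2$ up to isometry, no proper quotient of $\Delta_{(3,3,4)}$ preserves the orders of both generators and their product.

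\emph{Main obstacle.} The principal technical challenge is the middle step: identifying $O_K[\alpha, \beta]$ as exactly the maximal order $\qb$ rather than a proper suborder or the conjugate $\qb'$. Since $\beta$ has denominator $3$ in the basis $\set{1, \alpha, j, \alpha j}$, the closure-under-multiplication verification and the discriminant computation require careful bookkeeping, and distinguishing $\qb$ from $\qb'$ depends on tracking the local structure at the prime $(\sqrt{2})$.
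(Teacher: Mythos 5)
Your middle steps follow the paper's route (norm-one, trace-one generators with $\tr(\alpha\beta)=-\sqrt{2}$, then identification of $O_K[\alpha,\beta]$ with the maximal order, then an area comparison), with a different choice of $\beta$. That choice is legitimate: your $\beta=\frac{1}{6}(3+(1+2\sqrt{2})i+3j+ij)$ satisfies $\beta=\sqrt{2}\,\alpha+(1-\sqrt{2})\gamma$ in the notation of \Sref{sec:5}, and since $1-\sqrt{2}$ is a unit of $O_K$ this gives $\gamma\in O_K\alpha+O_K\beta$ and hence $O_K[\alpha,\beta]=\QQ=\qb$ immediately --- a shortcut that would spare you the closure-under-multiplication and discriminant computations you assert but do not carry out. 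Two caveats on that part: the trace and norm conditions alone do not pin down the order (the solution $(w,x)=(0,2)$ of $w^2+3x^2=12$ yields an element of the conjugate order $\QQ'$, not of $\QQ$), and \Lref{intro1} by itself only tells you your lattice is $\QQ$ or $\QQ'$; so some explicit membership check is genuinely needed, even if, up to relabelling by the conjugation $\QQ'=i\QQ i^{-1}$, the statement is insensitive to which of the two you hit.

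The genuine gap is in your final step. The claim that ``no proper quotient of $\Delta_{(3,3,4)}$ preserves the orders of both generators and their product'' is false, and the paper itself supplies counterexamples: the images of $\alpha,\beta,\alpha\beta$ retain orders $3,3,4$ in the quotient $\SL[2](\F_3)$ of \Pref{someiso}, and likewise in the quotients $\PSL(\F_p)$ of \Cref{cor:primefactorcor} (indeed \Lref{lem:normalsubgroups} proves the orders must be $3,3,4$ for any such surjection). So order preservation does not force injectivity of $\Delta_{(3,3,4)}\twoheadrightarrow\sg{\alpha,\beta}/\set{\pm 1}$, and your covolume computation, though in the spirit of the paper's ``comparison of the areas of the fundamental domains,'' is never actually wired into the argument: knowing that the norm-one unit group has coarea $\pi/6$ says nothing by itself about the kernel of an abstract surjection from $\Delta_{(3,3,4)}$ onto the subgroup generated by $\alpha$ and $\beta$, nor about that subgroup equalling the full unit group (which your Borel computation concerns, and which you implicitly conflate with $\sg{\alpha,\beta}$). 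To close the argument you need either the geometric input --- the fixed points of $\bar\alpha,\bar\beta$ (rotations by $2\pi/3$) and $\bar\alpha\bar\beta$ (rotation by $\pi/2$) span a hyperbolic triangle with angles $\pi/3,\pi/3,\pi/4$, so that by Poincar\'e's theorem $\sg{\bar\alpha,\bar\beta}$ is a faithfully and discretely realized $(3,3,4)$ von Dyck group of coarea $\pi/6$ --- or an appeal to the classification of discrete groups generated by two elliptic elements; only then does comparison with the covolume $\pi/6$ of the unit group give both the injectivity and the equality $\sg{\alpha,\beta}=\qb^1$. The paper is terse at exactly this point, but its appeal is to that fundamental-domain comparison, not to the incorrect group-theoretic claim you substitute for it.
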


In \Cref{114} we find that the Bolza group $B$ has index $24$ in $\qb^1/\set{\pm 1}$ and is generated, as a
normal subgroup of $\qb^1/\set{\pm 1}$, by the element
$(\alpha\beta)^2(\alpha^2\beta^2)^2$. The choice of $\alpha$ and $\beta$ implies that~$B$
is contained in the principal congruence subgroup
$\qb^1(\sqrt{2})/\set{\pm 1}$. However, this congruence subgroup has
torsion: it contains an involution closely related to the
hyperelliptic involution of the Bolza surface (see
Section~\ref{nine}).  Working out the ring structure of $\qb/2\qb$, we
are then able to compute the quotient $B\qb^1(2)/\qb^1(2)$ and
obtain the following.
\begin{theorem}\label{T1.5}
The fundamental group $B$ of the Bolza surface is contained strictly
between two principal congruence subgroups as follows:
\[
\qb^1(2)/\set{\pm 1} \,\subset\, B \,\subset\, \qb^1(\sqrt{2})/\set{\pm 1}.
\]
\end{theorem}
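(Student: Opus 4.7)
My approach is to establish the two inclusions separately and then argue that each is strict.

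For the upper inclusion, recall from \Cref{114} that $B$ is the normal closure in $\qb^1/\set{\pm 1}$ of $w := (\alpha\beta)^2(\alpha^2\beta^2)^2$. Since $\qb^1(\sqrt{2})/\set{\pm 1}$ is normal in $\qb^1/\set{\pm 1}$---being the kernel of the reduction homomorphism into the finite unit group of $\qb/\sqrt{2}\qb$---it suffices to verify that $w$ reduces to the identity modulo $\sqrt{2}$. This is a direct computation in the four-dimensional $\F_2$-algebra $\qb/\sqrt{2}\qb$, using the explicit forms of $\alpha$ and $\beta$ supplied by \Tref{intro3} together with the local behaviour of $D_B$ at $\sqrt{2}$ determined in \Sref{sec:4}. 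Strictness of this inclusion is then automatic: $\qb^1(\sqrt{2})/\set{\pm 1}$ contains the torsion involution of \Sref{nine} related to the hyperelliptic involution of the Bolza surface, whereas $B$ is torsion-free, being the fundamental group of a closed orientable surface.

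For the lower inclusion, consider the reduction-modulo-$2$ map $\pi \co \qb^1/\set{\pm 1} \lra U := (\qb/2\qb)^\times/\set{\pm 1}$, whose kernel is exactly $\qb^1(2)/\set{\pm 1}$. Writing $\bar B := \pi(B)$ and $\bar G := \pi(\qb^1/\set{\pm 1})$, the desired containment $\qb^1(2)/\set{\pm 1} \sub B$ is equivalent to $B = \pi^{-1}(\bar B)$, which, using $[\qb^1/\set{\pm 1} : B] = 24$ from \Cref{114}, is in turn equivalent to the finite-index equality $[\bar G : \bar B] = 24$. I reduce the problem to an explicit computation inside the unit group of the finite ring $\qb/2\qb$: present $\qb$ as a free $O_K$-module of rank $4$ via \Lref{intro1}, describe $\qb/2\qb$ as an eight-dimensional $\F_2$-algebra filtered by the nilpotent ideal $\sqrt{2}\qb/2\qb \cong \qb/\sqrt{2}\qb$, list the reductions of $\alpha$, $\beta$, $w$ in $\bar G$, and compute the normal closure of the class of $w$ together with its index.

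The main obstacle is this last, purely finite but somewhat intricate, calculation: one must identify the unit group of an eight-dimensional $\F_2$-algebra with prescribed nilpotent radical, and then track down the normal subgroup generated by a single element. Once the index $24$ is confirmed, the lower inclusion is proved, and its strictness amounts to the nontriviality of $\bar B$, equivalently to $w \not\in \qb^1(2)$; this is visible from the very same reduction-mod-$2$ computation, since the image of $w$ in $\qb/2\qb$ will be nontrivial even though it is trivial modulo $\sqrt{2}$. In this way both inclusions and their strictness reduce to explicit computations in the reductions of $\qb$ modulo $\sqrt{2}$ and $2$, and the crux of the argument is the description of the local Bolza order at the ramified prime above $2$.
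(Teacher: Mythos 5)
Your proposal is correct and follows essentially the same route as the paper: the upper inclusion and its strictness are exactly the computation of \Lref{112} together with the torsion element $\varpi$ of Section~\ref{nine}, and the lower inclusion is reduced, via the index $\dimcol{\qb^1/\set{\pm 1}}{B}=24$ from \Cref{114}, to an explicit finite computation in the $256$-element ring $\qb/2\qb$, which is precisely what Sections~\ref{sec:12}--\ref{sec:13} carry out. The only minor difference is that the paper does not compute the normal closure of the image of $w$ directly, but instead observes that this image lands in the index-$24$ subgroup of norm-one units of $\qb/2\qb$ congruent to $1$ modulo the cube of the maximal ideal and concludes by comparing the two indices.
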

This explicit identification of the Bolza group as a (non-principal) congruence subgroup in the maximal order requires a detailed analysis of quotients, and occupies Sections~\ref{ten}--\ref{sec:13}. % \ref{ten}, \ref{nine}, \ref{sec:12}, \ref{sec:13}.
In contrast, the Fuchsian group of the Klein quartic (which is the Hurwitz surface of least genus) does happen to be a principal congruence subgroup in the group of units of the corresponding maximal order; see \cite[Section~4]{KSV2}.

It follows from \Tref{T1.5} that $B$ is a congruence subgroup.
Moreover, we show that~ $\qb^1/\sg{{{-1}},B} \cong \SL[2](\F_3)$,
explaining some of the symmetries of the Bolza surface. The full
symmetry group, $\GL[2](\F_3)$, comes from the embedding of the
triangle group $\Delta_{(3,3,4)}$ in $\Delta_{(2,3,8)}$; see
\Cref{133}.

In the concluding Sections~\ref{twins} through \ref{s16} we present
``twin Bolza'' surfaces corresponding to factorisations of rational
primes~$7$, $17$, $23$, $31$, $41$, $47$, and $71$ as a product of a
pair of algebraic primes in $\Q(\sqrt{2})$.

Recent publications on systoles include Babenko \& Balacheff
\cite{BB15}; Balacheff, Makover \& Parlier \cite{BMP14}; Bulteau
\cite{Bu15}: Katz \& Sabourau \cite{KS15}; Kowalick, Lafont \&
Minemyer \cite{KLM15}; Linowitz \& Meyer \cite{LM15}.

\section{Fuchsian groups and quaternion algebras}

A cocompact Fuchsian group~$\Gamma\subset\PSL(\mathbb R)$ defines a
hyperbolic Riemann surface~$\HH^2/\Gamma$,
denoted~$X_{\Gamma}$, where~$\HH^2$ is the hyperbolic plane.
If~$\Gamma$ is torsion free, the systole~$\sys (X_{\Gamma})$ satisfies
\[
2\cosh\left(\tfrac12\sys(X_\Gamma)\right)=\min_{M}|\textrm{trace}(M)|,
\]%%
or
\begin{equation}
\label{cosh}
\sys(X_{\Gamma})=
\min_{M}2\,\text{arccosh}\left(\tfrac12|\textrm{trace}(M)|\right),
\end{equation}
where~$M$ runs over all the nonidentity elements of~$\Gamma$.  We will
construct families of Fuchsian groups in terms of suitable orders in
quaternion algebras.  Since the traces in the matrix algebra coincide
with reduced traces (see below) in the quaternion algebra, the
information about lengths of closed geodesics, and therefore about
systoles, can be read off directly from the quaternion algebra,
bypassing the traditional presentation in matrices.

Let~$k$ be a finite dimensional field extension of~$\mathbb{Q}$, let~$a,b\in k^*$, and
consider the following associative algebra over~$k$:
\begin{equation}
\label {quaternions} A=k[i,j\,|\, i^{2}=a,j^{2}=b,ji=-ij].
\end{equation}
The algebra~$A$ admits the following decomposition as a~$k$-vector
space:
\[
A=k1\oplus ki\oplus kj\oplus kij\;.
\]
Such an algebra~$A$, which is always simple, is called a quaternion algebra. The
center of~$A$ is precisely~$k$.

\begin{definition}\label{Tr+Nrd}
Let~$x=x_{0}+x_{1}i+x_{2}j+x_{3}ij\in A$. The \emph{conjugate} of~$x$
(under the unique symplectic involution) is~$x^*=x_{0}-x_{1}i-x_{2}j-x_{3}ij$.
The \emph{reduced
trace} of~$x$ is
\[
\Tr_A(x):=x+x^*=2x_{0},
\]
and the \emph{reduced norm} of~$x$ is
\[
\Nr_A(x):=xx^*=x_{0}^{2}-ax_{1}^{2}-bx_{2}^{2}+abx_{3}^{2}.
\]
\end{definition}

\begin{definition}[cf.~Reiner 1975 \cite{Re75}]
\label{2.3}
An \emph{order} of a quaternion algebra~$A$ (over~$k$) is a subring
with unit, which is a finitely generated module over the ring of
integers~$O_{k}\subset k$, and such that its ring of fractions is
equal to~$A$.

\end{definition}

If~$a$ and~$b$ in \eqref{quaternions} are algebraic integers in
$k^{*}$, then the subring ~$\mathcal O\subset A$ defined by
\begin{equation}
\label{order} \mcal O=O_k1+ O_ki+ O_kj+ O_kij
\end{equation}
is an order of~$A$ (see Katok 1992 \cite[p.~119]{Ka92}), although
not every order has this form; a famous example of an order not
having the form \eqref{order} is the Hurwitz order in Hamilton's
quaternion algebra over the rational numbers. Note that in the order
the scalars are taken from the ring of integers~$O_k$; the scalars
are taken from the field~$k$ when passing to the ring of fractions.

\section{The (2,3,8) and~$(3,3,4)$ triangle groups}\label{sec:238}

The Bolza surface can be defined by a subgroup of either the (2,3,8)
or the~$(3,3,4)$ triangle group.  We will study specific Fuchsian groups
arising as congruence subgroups of the arithmetic triangle group of
type~$(3,3,4)$.  First we clarify the relation between the~$(3,3,4)$ and
the (2,3,8) groups.  Let~$\Delta_{(2,3,8)}$ denote the (2,3,8)
triangle group, i.e.
\begin{equation}
\label{delta1'} \Delta_{(2,3,8)}=\langle x,y\mid
x^2=y^3=(xy)^8=1\rangle.
\end{equation}
Let~$h\co \Delta_{(2,3,8)}\rightarrow \Z/2\Z$ be the homomorphism
sending~$x$ to the nontrivial element of $\Z / 2 \Z$ and~$y$ to the identity element.

\begin{lemma}
\label{kerh}
As a subgroup of~$\Delta_{(2,3,8)}$, the kernel of~$h$ is given by
\[
\ker(h)=\langle \alpha,\beta\mid
\alpha^3=\beta^3=(\alpha\beta)^4=1\rangle
\]
where~$\alpha=y$ and~$\beta=xyx$.
\end{lemma}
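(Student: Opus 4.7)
The plan is to apply the Reidemeister--Schreier algorithm to the presentation $\Delta_{(2,3,8)} = \langle x, y \mid x^2 = y^3 = (xy)^8 = 1\rangle$ relative to the index-$2$ subgroup $\ker(h)$. First I would check that $h$ is a well-defined surjection by verifying $h(x^2) = h(y^3) = h((xy)^8) = 0$ in $\Z/2\Z$, so $\ker(h)$ has index $2$, and I would choose the Schreier transversal $T = \{1, x\}$.

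The Reidemeister--Schreier generators are the nontrivial elements of the form $t \cdot s \cdot \overline{ts}^{-1}$ with $t \in T$ and $s \in \{x, y\}$, where $\overline{g}$ denotes the $T$-representative of the coset $g\,\ker(h)$. Since $\overline{x} = x$ and $\overline{xy} = x$, the two generators coming from $s = x$ collapse to the identity, while those coming from $s = y$ produce $\alpha := y$ and $\beta := x \cdot y \cdot x^{-1} = xyx$ (using $x^{-1} = x$). I would then rewrite each conjugate $t \, r \, t^{-1}$ of each defining relator $r$ in the new generators. The conjugates of $x^2$ rewrite trivially; $y^3$ and $x y^3 x^{-1} = (xyx)^3$ yield $\alpha^3 = 1$ and $\beta^3 = 1$ respectively; and the identity $(xy)^2 = (xyx)\,y = \beta\alpha$ gives $(xy)^8 = (\beta\alpha)^4$, which is equivalent to $(\alpha\beta)^4 = 1$ after cyclic conjugation. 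The conjugate by $x$, namely $x(xy)^8 x^{-1} = (yx)^8$, rewrites directly to $(\alpha\beta)^4$, the same relation. Since Reidemeister--Schreier yields a complete presentation of the subgroup, we conclude $\ker(h) = \langle \alpha, \beta \mid \alpha^3 = \beta^3 = (\alpha\beta)^4 = 1\rangle$.

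The main obstacle is the letter-by-letter rewriting of the eight-letter word $(xy)^8$: one must track how the coset state alternates between $1$ and $x$ after each occurrence of $x$, and it is exactly this alternation that forces every $x$-contribution to be trivial while leaving the clean product $\beta\alpha\beta\alpha\beta\alpha\beta\alpha = (\beta\alpha)^4$. Once this bookkeeping is carried out carefully, the Reidemeister--Schreier theorem guarantees completeness of the resulting presentation, so no extra relations are introduced. As an optional sanity check one could verify in $\Delta_{(2,3,8)}$ directly that $(\alpha\beta)^4 = (yxyx)^4 = (yx)^8 = x^{-1}(xy)^8 x = 1$, and compare fundamental-domain areas to see that the surjection from the abstract $(3,3,4)$-triangle group onto $\ker(h)$ is an isomorphism.
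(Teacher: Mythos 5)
Your proof is correct, but it takes a different route from the paper's. The paper explicitly declines to run Reidemeister--Schreier (``the presentation can be obtained by means of the Reidemeister--Schreier method, but here is a direct proof'') and instead argues by hand: it shows every element of $\ker(h)$ is a word in $\alpha=y$ and $\beta=xyx$ via a four-case induction on word length, and then verifies that $\alpha^3=\beta^3=(\alpha\beta)^4=1$ hold; the fact that these relations suffice is ultimately backed up by the geometric identification with $\Delta_{(3,3,4)}$ via the doubled $(2,3,8)$ triangle in the next lemma. You carry out the Reidemeister--Schreier computation itself, which has the advantage of delivering completeness of the presentation in one stroke, with no appeal to geometry; the paper's route buys a self-contained, elementary generation argument that a reader can check without knowing the rewriting process. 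One small bookkeeping point in your write-up: the Schreier generator $\gamma(x,x)=x\cdot x\cdot\overline{x^2}^{\,-1}=x^2$ is \emph{not} freely trivial, so it does not ``collapse to the identity'' on its own; rather, the rewritten conjugates of the relator $x^2$ are exactly what kill it (equivalently, you may discard it only because that relation is imposed). Since that relator is part of your data, this is an imprecision of phrasing rather than a gap, and the rest of your rewriting of $y^3$, $xy^3x^{-1}$, and $(xy)^8$ (with the coset state alternating as you describe, giving $(\beta\alpha)^4$ up to conjugacy) is sound.
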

\begin{proof}
The presentation can be obtained by means of the Reidemeister-Schreier
method, but here is a direct proof.  Note that~$xy^n x=(xyx)^n
=\beta^n$.  Each element~$t\in\ker(h)$ is of one of 4 types:
\begin{enumerate}
\item
\label{case1}
$t=xy^{n_1}xy^{n_2}\cdots xy^{n_k}x$;
\item
$t=y^{n_1}xy^{n_2}\cdots xy^{n_k}x$;
\item
$t=xy^{n_1}xy^{n_2}\cdots xy^{n_k}$;
\item
$t=y^{n_1}xy^{n_2}\cdots xy^{n_k}$,
\end{enumerate}
with an even number of~$x$'s, where all the exponents~$n_i$ are
either~1 or~2.  To show that each element can be expressed in terms of
$\alpha$ and~$\beta$, we argue by induction on the length of the
presentation in terms of~$x$'s and~$y$'s.  Type~(1) is reduced to (a
shorter) type~(2) by noting that~$xy^{n_1}xy^{n_2}\cdots
xy^{n_k}x=\beta^{n_1} y^{n_2}\cdots xy^{n_k}x$.  Type (2) is reduced
to (a shorter) type (1) by noting that~$y^{n_1}xy^{n_2}\cdots
xy^{n_k}x=\alpha^{n_1}xy^{n_2}\cdots xy^{n_k}x$.  Type~(3) is reduced
to type~(4) by noting that~$xy^{n_1}xy^{n_2}\cdots xy^{n_k}=
\beta^{n_1}y^{n_2}\cdots xy^{n_k}$.  Type~(4) is reduced to (a
shorter) type~(3) by noting that~$y^{n_1}xy^{n_2}\cdots xy^{n_k}=
\alpha^{n_1}xy^{n_2}\cdots xy^{n_k}$.

To check the relations on~$\ker(h)$, note that
\begin{itemize}
\item
$\alpha^3=y^3=1$;
\item
$\beta^3=(xyx)^3=xy^3x=xx=1$;
\item
$(\alpha\beta)^4=(yxyx)^4=y(xy)^8y^{{{-1}}}=1$,
\end{itemize}
completing the proof.
\end{proof}

For a finitely generated non-elementary
subgroup~$\Gamma\subset\PSL(\bb R)$, we define~$\Gamma^{(2)}=\langle
t^2:t\in\Gamma\rangle$.

\begin{lemma}
\label{(2,3,8),kerh}
\label{cory}
For~$\Gamma=\Delta_{(2,3,8)}$ we have~$\Gamma^{(2)}=\ker(h)$, and
therefore the group~$\Delta_{(2,3,8)}^{(2)}$ is isomorphic to the
triangle group~$\Delta_{(3,3,4)}$.
\end{lemma}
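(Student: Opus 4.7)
The plan is to prove the set-theoretic equality $\Gamma^{(2)}=\ker(h)$ by establishing both inclusions, after which the second assertion follows immediately from \Lref{kerh} by comparing presentations.

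The inclusion $\Gamma^{(2)}\subseteq\ker(h)$ is formal: since the target $\Z/2\Z$ of $h$ has exponent two, $h(t^2)=2h(t)=0$ for every $t\in\Gamma$, so each generator $t^2$ of $\Gamma^{(2)}$ lies in $\ker(h)$. In particular $h$ factors through the quotient $\Gamma/\Gamma^{(2)}$, and since $h$ is surjective we have $|\Gamma/\Gamma^{(2)}|\geq 2$.

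For the reverse inclusion my strategy is to show that $|\Gamma/\Gamma^{(2)}|\leq 2$; combined with the previous paragraph this will force $\Gamma^{(2)}=\ker(h)$. To obtain the upper bound I would work in the presentation $\Gamma=\langle x,y\mid x^2=y^3=(xy)^8=1\rangle$ of~\eqref{delta1'}. The quotient $\Gamma/\Gamma^{(2)}$ is a group of exponent two, and hence abelian (from $(ab)^2=a^2b^2=1$ one reads off $ab=ba$). The images $\bar x,\bar y$ of the generators both satisfy $\bar x^2=\bar y^2=1$; together with the relation $\bar y^3=1$ this forces $\bar y=1$, while the relation $(\bar x\bar y)^8=1$ becomes automatic. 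Thus $\Gamma/\Gamma^{(2)}$ is cyclic, generated by $\bar x$, of order at most $2$, as required.

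Once $\Gamma^{(2)}=\ker(h)$ is established, the final assertion is immediate: \Lref{kerh} identifies this common subgroup with $\langle\alpha,\beta\mid\alpha^3=\beta^3=(\alpha\beta)^4=1\rangle$, which is by definition the triangle group~$\Delta_{(3,3,4)}$. I do not anticipate a substantive obstacle; the argument is purely formal once one notices that the relation $y^3=1$ collapses $\bar y$ to the identity in any exponent-two quotient, and the only small verification is that the octic relation $(xy)^8=1$ imposes no additional constraint beyond those already used.
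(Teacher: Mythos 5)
Your argument is correct, but it takes a different route from the paper's. For the equality $\Gamma^{(2)}=\ker(h)$, the paper only argues the nontrivial-looking inclusion $\ker(h)\subseteq\Gamma^{(2)}$ directly, by observing that the generators $\alpha,\beta$ of $\ker(h)$ from \Lref{kerh} have order $3$ and hence are squares, $\alpha=(\alpha^2)^2$, $\beta=(\beta^2)^2$ (the reverse inclusion being the formal one you also note); you instead bound the exponent-two quotient, showing $\Gamma/\Gamma^{(2)}$ is generated by $\bar x$ because $\bar y$ has order dividing both $2$ and $3$, so that $[\Gamma:\Gamma^{(2)}]\leq 2$, and then conclude by an index comparison inside $\ker(h)$. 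Both are sound; the paper's square trick is shorter, while your computation additionally identifies $\Gamma/\Gamma^{(2)}\cong\Z/2\Z$ explicitly and does not need to know generators of $\ker(h)$ for this step. For the second assertion the divergence is more substantive: you identify $\ker(h)$ with $\Delta_{(3,3,4)}$ purely by matching the presentation of \Lref{kerh} with the standard presentation of the $(3,3,4)$ von Dyck group, whereas the paper argues geometrically, doubling the $(2,3,8)$ right triangle across the side opposite the $\pi/3$ angle to obtain a $(3,3,4)$ triangle, thereby realizing the index-two subgroup as the Fuchsian $(3,3,4)$ triangle group with the doubled triangle as fundamental domain. The geometric argument carries the extra benefit of certifying that the abstractly presented group of \Lref{kerh} really is the geometric triangle group (equivalently, that no further relations hold), a point your route delegates entirely to \Lref{kerh} and to the standard fact that a hyperbolic triangle group is defined by its triangle-group presentation; given that \Lref{kerh} is available to cite, this is legitimate, but it is worth being aware that this is where the content of the paper's triangle-doubling step lives.
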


\begin{proof}
We have~$\alpha=\alpha^4=(\alpha^2)^2$ and similarly for~$\beta$.
Thus~$\ker(h)\subset\Delta_{(2,3,8)}^{(2)}$.  Choosing~$T$ to be the
right-angle hyperbolic triangle with acute angles~$\frac{\pi}{3}$
and~$\frac{\pi}{8}$, we note that the ``double'' of~$T$, namely the
union of~$T$ and its reflection in its (longer) side opposite the
angle~$\frac{\pi}3$, is an isosceles triangle with
angles~$\frac{\pi}3$,~$\frac{\pi}3$, and~$\frac{\pi}4$, proving the
lemma.
\end{proof}

\begin{definition}[{\cite{MR03}}]
\label{itf}
Let~$\Gamma$ be a finitely generated non-elementary subgroup of
$\PSL(\bb R)$. The \emph{invariant trace field of} of~$\Gamma$,
denoted by~$k\Gamma$, is the field~$\bb Q(\textrm{tr}\Gamma^{(2)})$.
\end{definition}

\begin{definition}
For an~$(\ell,m,n)$ triangle group, let
\[
\lambda(\ell,m,n):=4\cos^2\frac\pi\ell+4\cos^2\frac\pi
m+4\cos^2\frac\pi n+8\cos\frac\pi \ell\cos\frac\pi m\cos\frac\pi n-4.
\]
\end{definition}
In particular,~$\lambda(3,3,4)=\sqrt{2}.$ Therefore by
\cite[p.~265]{MR03}, the invariant trace field of~$\Delta_{(3,3,4)}$
(see Definition~\ref{itf}) is
\begin{equation}
\label{k delta1} k\Delta_{(3,3,4)}=\bb Q(\sqrt{2}).
\end{equation}
%% Remark: $\lambda(2,3,8) = \sqrt{2}{{-1}}$, so the invariant trace field of $\Delta_{(2,3,8)}$ is $\Q(\sqrt{2})$, same as for $\Delta_{(3,3,4)}$.
%
By Takeuchi's theorem (\cite{Take}; see \cite[Theorem 8.3.11]{MR03}),
the~$(\ell,m,n)$ triangle group is arithmetic if and only if for every
non-trivial embedding ~$\sigma$ of its invariant trace field in~$\bb
R$, we have~$\sigma\left(\lambda(\ell,m,n)\right)<0$.  The field~$\bb
Q(\sqrt{2})$ has two imbeddings in~$\bb R$.  The non-trivial imbedding
sends~$\sqrt{2}$ to~$-\sqrt{2}<0$.  Therefore by Takeuchi's theorem,
the group~$\Delta_{(3,3,4)}$ is arithmetic.

\section{Partition of Bolza surface}
\label{eight}

The
% principal congruence subgroup $\qb^1(\sqrt{2})$ contains the Fuchsian group of the
Bolza surface~$M$ %(see Section~\ref{ten}), namely
is a Riemann surface of genus 2 with a holomorphic automorphism group
of order 48, the highest for this genus.  The surface~$M$ can be
viewed as the smooth completion of its affine form
\begin{equation}
\label{81}
y^2=x^5-x
\end{equation}
where~$(x,y)\in\mathbb{C}^2$.  Here~$M$ is as a double cover of the
Riemann sphere ramified over the vertices of the regular inscribed
octahedron; this is immediate from the presentation~\eqref{81} where
the branch points are~$0, \pm 1, \pm i, \infty$.  These six vertices
lift to the Weierstrass points of~$M$.  The hyperelliptic involution
of~$M$ fixes the six Weierstass points.  It also switches the two
sheets of the cover and is a lift of the identity map on the Riemann
sphere.  The hyperelliptic involution can be thought of in affine
coordinates~\eqref{81} as the map~$(x,y)\mapsto (x,-y)$.  The
projection of~$M$ to the Riemann sphere is induced by the projection
to the~$x$-coordinate.

The surface~$M$ admits a partition into (2,3,8) triangles, which is obtained as follows.  We start with the (octahedral)
partition of the sphere into 8 equilateral hyperbolic triangles with
angle~$\pi/4$.  We then consider the barycentric subdivision, so that
each equilateral triangle is subdivided into 6 triangles of type
(2,3,8).

Here the Weierstrass points correspond to the vertices of the (2,3,8)
triangle with angle~$\pi/8$.  The partition of the Riemann sphere into
copies of the (2,3,8) triangle induces a partition of~$M$ into such
triangles.  On the sphere, we have 8 triangles meeting at each branch
point (corresponding to a Weierstrass point on the surface), for a
total angle of~$\pi$ around the branch point.  This conical
singularity is ``smoothed out'' when we pass to the double cover to
obtain the hyperbolic metric on~$M$.

To form the~$(3,3,4)$ partition, we pair up the~$\pi/8$ angles, by
combining the (2,3,8) triangles into pairs whose common side lies on
an edge of the octahedron.  This creates a partition of the sphere
into copies of the~$(3,3,4)$ triangle and induces a partition of~$M$
into copies of the~$(3,3,4)$ triangle.  Therefore the vertex of the
$(3,3,4)$ triangle where the angle is~$\pi/4$ lifts to a Weierstrass
point on~$M$.

\section{The quaternion algebra}\label{sec:4}

For the benefit of geometers who may not be familiar with quaternian
algebras, we will give a presentation following Maclachlan and Reid
2003 \cite[p.~265]{MR03} but in more detail.  To study the~$(3,3,4)$
case, we will exploit the quaternion algebra
\begin{equation}
\label{D_MR}
D_B =K\left[i,j\mid i^2=-3,\, j^2=\sqrt{2},\, ij=-ji\right]
\end{equation}
where $K=\Q(\sqrt2)$.  Denote by~$\sigma_0$ the natural embedding
of~$K$ in~$\bb R$ and by~$\sigma$ the other embedding,
sending~$\sqrt{2}$ to~$-\sqrt{2}$.

\begin{definition}
A quaternion algebra~$D$ is said to \emph{split} under a completion
(archimedean or nonarchimedean) if it becomes a matrix algebra.  It is
said to be \emph{ramified} if it remains a division algebra.
\end{definition}

\begin{remark}
\label{42}
In general there is a finite even number of places where a quaternion algebra ramifies, including the archimedean ramified places.%
\footnote{Recall that in the Hurwitz case there are two archimedean
ramified places and no nonarchimedean ones (see \cite{KSV06a}).}
Our algebra~$D_B$ ramifies at two places: the archimedean
place~$\sigma$ and the
nonarchimedean place~$(\sqrt{2})$ (see below).%
%
%
%%% The general claim follows, I think, from a discriminant argument.
%%% But it's not as easy to implement, since one must consider the prime~$\sqrt{2}$ (in the trace field, not~$\Q$),
%%% and decompose it in some maximal order.
%\footnote{Also in a situation with a quadratic field and a maximal order, apparently ramified primes correspond precisely to the rational primes which are squares of two-sided ideals in a maximal order, according the the math overflow comment at http://mathoverflow.net/questions/125043/ramified-quaternion-algebras which it would be nice to find a source for.  This does clarify a bit what is going on because in our situation indeed~$2=(\sqrt{2})^2$.}
%
\end{remark}

\begin{prop}\label{archram}
The algebra~$D_B$ splits under the natural embedding of the center in $\R$ and remains a
division algebra under the other embedding.
\end{prop}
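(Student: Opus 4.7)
The plan is to reduce the statement to the well-known classification of quaternion algebras over~$\R$: for $a,b\in\mul{\R}$, the algebra $(a,b)_\R$ is split (isomorphic to $M_2(\R)$) whenever at least one of $a,b$ is positive, and is a division algebra (isomorphic to Hamilton's quaternions) whenever both are negative.  The standard route to this classification is to examine the reduced norm form $x_0^2 - a x_1^2 - b x_2^2 + a b x_3^2$: the algebra is split iff this form is isotropic over~$\R$, which fails precisely when it is (anisotropic and) definite, i.e.\ when both $a, b < 0$.

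Given the classification, I would then just plug in the two embeddings.  Under $\sigma_0$ we have $D_B\otimes_{K,\sigma_0}\R\cong (-3,\sqrt 2)_\R$, and since $\sqrt 2>0$ this is split.  For concreteness I would exhibit an explicit isomorphism with $M_2(\R)$, for instance
\[
i\;\longmapsto\;\smat{0}{-3}{1}{0},\qquad j\;\longmapsto\;\smat{2^{1/4}}{0}{0}{-2^{1/4}},
\]
and verify mechanically that the images satisfy $i^2=-3$, $j^2=\sqrt 2$ and $ji=-ij$ in $M_2(\R)$, so that the assignment extends to a $K$-algebra homomorphism which must then be an isomorphism for dimension reasons.

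Under $\sigma$, by contrast, the nontrivial embedding sends $\sqrt 2\mapsto -\sqrt 2<0$, so $D_B\otimes_{K,\sigma}\R\cong(-3,-\sqrt 2)_\R$, and both generators now have negative squares.  The reduced norm form (Definition~\ref{Tr+Nrd}) becomes
\[
\Nr(x_0+x_1i+x_2j+x_3ij)\;=\;x_0^2+3x_1^2+\sqrt 2\,x_2^2+3\sqrt 2\,x_3^2,
\]
which is positive definite on the real 4-dimensional algebra and therefore vanishes only at~$0$.  Hence every nonzero element $x$ has nonzero norm and so is invertible via $x^{-1}=x^*/\Nr(x)$, making $D_B\otimes_{K,\sigma}\R$ a division algebra.

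There is no real obstacle here; the argument is a routine consequence of the classification of real quaternion algebras.  The only point that demands attention is a careful bookkeeping of the sign of $j^2$ under each embedding, i.e.\ remembering that $\sigma$ flips the sign of $\sqrt 2$ so that $(-3,\sqrt 2)$ becomes $(-3,-\sqrt 2)$ and lands in the definite case.
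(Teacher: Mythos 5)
Your proposal is correct and follows essentially the same route as the paper, which simply invokes the sign classification of real quaternion algebras (Katok, Theorems 5.2.1 and 5.2.3): split at $\sigma_0$ since $\sqrt{2}>0$, Hamiltonian at $\sigma$ since $-\sqrt{2}<0$. The only difference is that you verify the classification directly (explicit matrix model in the split case, positive definiteness of the norm form in the definite case) where the paper cites it.
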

\begin{proof}
Since~$\sqrt{2}>0$, we have
\[D_B\otimes_{\sigma_0} \bb R\cong \M_2(\bb R)\]
by \cite[Theorem 5.2.1]{Ka92}. Meanwhile, under~$\sigma$ the
algebra~$D_B$ remains a division algebra since~$-\sqrt{2}<0$, and
following \cite[Theorem 5.2.3]{Ka92}, we have~$D_B \otimes_{\sigma}\bb
R\cong\bb H$ where~$\bb H$ is the Hamilton quaternion algebra.
\end{proof}

\begin{cor}
The algebra~$D_B$ is a division algebra.
\end{cor}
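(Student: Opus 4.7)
The plan is to exploit the fundamental dichotomy for quaternion algebras: such an algebra over a field is either a division algebra or isomorphic to the $2\times 2$ matrix algebra over its centre. Since $D_B$ is by construction a quaternion algebra with centre $K=\Q(\sqrt{2})$ (central simple of dimension $4$ over $K$), Wedderburn's theorem gives either $D_B \cong \M_2(K)$ or $D_B$ is a division algebra. It therefore suffices to rule out the first possibility, for which \Pref{archram} is tailor-made.

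The natural map $D_B \lra D_B \otimes_\sigma \R$ is injective: the source is a simple ring, so the kernel (a two-sided ideal) is either zero or all of $D_B$, and the map sends $1$ to $1$ and is therefore nonzero. By \Pref{archram} the target is isomorphic to Hamilton's quaternions $\bb H$, which is a division algebra. Hence $D_B$ embeds into a division algebra and in particular has no zero divisors. A finite-dimensional associative algebra without zero divisors is a division algebra (left multiplication by a nonzero element is an injective, hence bijective, $K$-linear endomorphism), so $D_B$ is a division algebra.

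Equivalently, one could argue by contradiction: were $D_B \cong \M_2(K)$, then tensoring along either embedding $K \hra \R$ would yield $\M_2(\R)$, which contains nontrivial idempotents and so cannot be isomorphic to $\bb H$, contradicting \Pref{archram}. There is essentially no obstacle here; the corollary is a formal consequence of \Pref{archram} together with the Wedderburn classification of central simple algebras of dimension $4$.
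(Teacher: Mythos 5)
Your proposal is correct and follows essentially the same route as the paper: embed $D_B$ into $D_B \otimes_\sigma \R \cong \bb H$ via \Pref{archram} to see it is a domain, then use finite-dimensionality (algebraicity over the centre) to conclude it is a division algebra. The extra remarks on injectivity via simplicity and the Wedderburn dichotomy are fine but only spell out what the paper leaves implicit.
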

\begin{proof}
Indeed~$D_B$ is a domain as a subring of~$D_B \otimes_{\sigma} \R$, and being algebraic over its center, it is a division algebra.
\end{proof}

\forget
Let~$x=x_0+x_1i+x_2j+x_3ij\in D$, then by definition \ref{Tr+Nrd}
\[\Nr_D(x)=x_0^2+3x_1^2-\sqrt{2}x_2^2-3\sqrt{2}x_3^2.\]
The nontrivial imbedding of the field~$K$ in~$\bb R$, denoted by
$\sigma$, sends~$\sqrt{2}$ to~$-\sqrt{2}$. Consider
$\sigma\left(\Nr_D(x)\right)$:
\[\begin{aligned}
\sigma\left(\Nr_D(x)\right)&=
\sigma\left(x_0^2+3x_1^2-\sqrt{2}x_2^2-3\sqrt{2}x_3^2\right)\\& =
\sigma^2(x_0)+3\sigma^2(x_1)+\sqrt{2}\left(\sigma^2(x_2)+3\sigma^2(x_3)\right).
\end{aligned}\]
Assume that~$\Nr_D(x)=0$, then~$\sigma\left(\Nr_D(x)\right)=0$ and
necessarily
\[
\sigma(x_0)=\sigma(x_1)=\sigma(x_2)=\sigma(x_3)=0
\]
and therefore~$x=0$.  By, \cite[theorem 5.2.3]{Ka92} the quaternion
algebra~$D$ is a division algebra.
\forgotten

\begin{proposition} \label{42a}
The algebra~$D_B$ ramifies at the prime~$(\sqrt{2})$ and is split at all other non-archimedean places.
\end{proposition}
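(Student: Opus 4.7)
My plan is to combine a local analysis of the Hilbert symbol at each relevant finite prime with the parity constraint from \Rref{42}. A quaternion algebra $(a,b)_K$ can ramify at a finite prime of $O_K$ only if that prime divides $2ab$; for $D_B = (-3,\sqrt{2})$ this means the candidates lie above $2$ or $3$. Since $2$ is not a quadratic residue modulo~$3$, the rational prime $3$ is inert in $K$, so the only candidate primes are $(\sqrt{2})$ and $(3)$. At every other finite prime both $-3$ and $\sqrt{2}$ are units and the residue characteristic is odd, and the standard formula for the tame Hilbert symbol gives $(-3,\sqrt{2})_{\mathfrak{p}} = 1$, so $D_B$ splits there.

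The key local computation is then to show that $D_B$ splits at $(3)$. Writing $-3 = (-1)\cdot 3$, where $3$ is a uniformizer in $K_{(3)}$, bilinearity of the Hilbert symbol gives
\[
(-3,\sqrt{2})_{(3)} \;=\; (-1,\sqrt{2})_{(3)}\cdot(3,\sqrt{2})_{(3)}.
\]
The first factor is trivial since both entries are units at an odd prime. The second factor equals $1$ precisely when $\sqrt{2}$ is a square in $K_{(3)}$, and by Hensel's lemma this reduces to the analogous question in the residue field $O_K/(3)\cong \F_9$. Since $(\sqrt{2})^2 = 2 \equiv -1 \pmod{3}$, the element $\sqrt{2}$ satisfies $(\sqrt{2})^4 = 1$, placing it in the unique subgroup of order $4$ in the cyclic group $\F_9^\times$, which is exactly the subgroup of squares. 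Hence $(-3,\sqrt{2})_{(3)} = 1$ and $D_B$ splits at $(3)$.

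To finish, I would invoke \Pref{archram} together with \Rref{42}. By \Pref{archram}, $D_B$ is ramified at the archimedean place $\sigma$ and split at $\sigma_0$; combined with the splitting at $(3)$ and at every other finite prime, the set of ramified places of $D_B$ is either $\{\sigma\}$ (if $(\sqrt{2})$ splits) or $\{\sigma,(\sqrt{2})\}$ (if $(\sqrt{2})$ ramifies). Since by \Rref{42} this set must have even cardinality, the second alternative must hold, proving that $D_B$ ramifies at $(\sqrt{2})$ and splits at every other non-archimedean place.

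The main obstacle is the Hilbert symbol computation at $(3)$, where one must correctly identify the square class of $\sqrt{2}$ in $\F_9$; this is the only step that is not essentially formal. The parity argument is what makes the proof clean, since it sidesteps the delicate residue-characteristic-$2$ calculation that would otherwise be needed for a direct verification at $(\sqrt{2})$.
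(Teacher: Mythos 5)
Your proof is correct, but it reaches the ramification at $(\sqrt{2})$ by a genuinely different route than the paper. The splitting part is essentially the same in content: you and the paper both reduce the question at $(3)$ to the square class of $\sqrt{2}$ in the residue field $\F_9$ (the paper exhibits $\sqrt{2}$ as a value of the norm form $x^2+3y^2$ via $(1-\sqrt{2})^2\equiv\sqrt{2}\pmod 3$ and Hensel; you phrase it through the tame Hilbert symbol and the order-$4$ subgroup of $\F_9^\times$), and both of you dispose of all primes away from $2$ and $3$ by the standard unit/odd-characteristic criterion. The real divergence is at $(\sqrt{2})$: the paper proves ramification directly, by a hands-on computation in $\Z_2[\sqrt{2}]$ showing that $x^2+3y^2=\sqrt{2}z^2$ has no primitive solution modulo $4\sqrt{2}$, whereas you deduce it from \Pref{archram} plus the evenness of the number of ramified places quoted in \Rref{42} (Hilbert reciprocity). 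Your argument is shorter and sidesteps the delicate residue-characteristic-$2$ analysis, at the cost of invoking the global parity theorem as a black box; the paper's computation is purely local and self-contained, and its style of working modulo powers of $\sqrt{2}$ is reused later in the classification of the maximal orders containing the standard order, which is a side benefit your route does not provide.
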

\begin{proof}
The ring of integers of~$\Q(\sqrt{2})$ is~$\Z[\sqrt{2}]$,
in which the ideals~$(\sqrt{2})$ and~$(3)$ are prime.  The
discriminant of~$D_B$ is~$-6\sqrt{2}$, thus the algebra splits over any
prime other than~$(\sqrt{2})$ and~$(3)$.

Recall that $\Q_p$ denotes the field of~$p$-adic numbers, where $p$ is a rational prime. Notice that~$2$ is not a square in~$\Z/3\Z$, and therefore it cannot be a square in~$\Q_3$,
so the completion $\Q_3(\sqrt{2})$ of $\Q(\sqrt{2})$ at the prime $3$ is a quadratic extension of~$\Q_3$. To show that $D_B$ splits at $(3)$, it suffices to present~$\sqrt{2}$ as a norm in the quadratic extension~$\Q_3(\sqrt{2},\sqrt{-3})/\Q_3(\sqrt{2})$, namely in the form~$x^2+3y^2$ for~$x,y \in \Q_3(\sqrt{2})$.
%It clearly suffices to find such~$x,y \in \Z_3[\sqrt{2}]$.
By Hasse's principle, it suffices to solve the equation in the residue field~$\Z_3[\sqrt{2}]/3\Z_3[\sqrt{2}] = \F_9$, where one can take~$x = 1-\sqrt{2}$ and~$y = 0$ (indeed~$(1-\sqrt{2})^2 = 3-2\sqrt{2} \equiv \sqrt{2} \pmod{3}$).

Finally we show that~$D_B$ remains a division algebra under the completion of~$\Q(\sqrt{2})$ at the prime~$(\sqrt{2})$, which is~$\Q_2(\sqrt{2})$. It suffices to show that~$\sqrt{2}$ is not of the form~$x^2+3y^2$ for~$x,y \in \Q_2(\sqrt{2})$. Clearing out common denominators, we will show that there is no non-zero solution to~$$x^2+3y^2 = \sqrt{2}z^2$$ with~$x,y,z \in \Z_2[\sqrt{2}]$. We may assume not all of~$x,y,z$ are divisible by~$\sqrt{2}$. This equation does have a solution modulo~$4$ (indeed, take~$x = y = 1$ and~$z = 0$). We will show that there is no solution modulo~$4\sqrt{2}$. So assume
$$x^2 + 3 y^2 \equiv \sqrt{2} z^2 \pmod{4\sqrt{2}}.$$
Observe that if one of $x, y$ is divisible by $\sqrt{2}$, then they both are.  But in that case~$z$ is also divisible by~$\sqrt{2}$, contrary to assumption. So we can write~$x = 1+\sqrt{2}x'$ and~$y = 1+\sqrt{2}y'$ for~$x',y' \in \Z_2[\sqrt{2}]$. Substituting, we have
$$2\sqrt{2}+2 x' + \sqrt{2}x'^2 + 2 y'+3\sqrt{2}y'^2 \equiv z^2 \pmod{4},$$
so~$z$ is divisible by~$\sqrt{2}$ and we can write~$z = \sqrt{2}z'$ for~$z' \in \Z_2[\sqrt{2}]$. Now
$$2+\sqrt{2} x' + x'^2 + \sqrt{2} y'+3 y'^2 \equiv \sqrt{2} z'^2 \pmod{2\sqrt{2}},$$
so~$y' \equiv x' \pmod{\sqrt{2}}$, and we write~$y' = x' + \sqrt{2}y''$ for~$y'' \in \Z_2[\sqrt{2}]$. Substituting we get
$$2+ 2y'' +2y''^2 \equiv \sqrt{2} z'^2 \pmod{2\sqrt{2}},$$
so clearly~$z$ is divisible by~$\sqrt{2}$, and then
$$2+ 2y'' +2y''^2 \equiv 0 \pmod{2\sqrt{2}},$$
which implies
$$1+ y'' +y''^2 \equiv 0 \pmod{\sqrt{2}},$$
a contradiction since~$y''+y''^2$ is always divisible by~$\sqrt{2}$.
\end{proof}

\section
{The standard order in~$D_B$ and maximal orders containing
it}\label{sec:5}

In this section we prove~\Lref{intro1}.  Recall that an order~$M$
in a quaternion algebra~$D$ over a number field is maximal if and only if its discriminant is equal to the
discriminant of~$D$ \cite[Corollaire III.5.3]{Vig80}, where the
discriminant of~$D$ is the product of the ramified non-archimedean
primes. If~$M$ happens to be free as an~$O_K$-module, spanned
by~$x_1,\dots,x_4$, then its discriminant is easily computed as the
square root of the determinant of the matrix of reduced traces~$(\Tr_{D}(x_ix_j))$.

Since~$a=-3$ and~$b=\sqrt{2}$ are in~$O_K = \Z[\sqrt{2}]$, we obtain
an order~$\mcal O\subset D_B$ by setting
\[
\mcal O =O_{K}\left[i,j\right] = O_K1 + O_K i + O_K j + O_Kij .
\]
This is the ``standard order'' resulting from the presentation of~$D_B$,
for which we have~$\disc(\O)^2 = 16a^2b^2$, so that~$\disc(\O) =
12\sqrt{2}$. On the other hand~$\disc(D_B) = \sqrt{2}$ by
\Pref{archram}, so~$\O$ is not maximal. We seek a maximal order~$\QQ$
containing~$\O$. Comparing the discriminants, we know in advance
that~$\dimcol{\QQ}{\O} = 144$.

Notice that
\begin{equation}
\label{a_re}
\alpha = \frac{1}{2}(1+i)
\end{equation}
is an algebraic integer. We make the following observation.
\begin{prop}\label{p5.1}
The order~$\O_1$ generated over~$\O$ by~$\alpha$ is~$O_K[\alpha,j]$, which is spanned as a (free)~$O_K$-module by the elements
$$1,\,\alpha,\,j,\,\alpha j.$$
In particular~$\disc(\O_1) = 3\sqrt{2}$.
\end{prop}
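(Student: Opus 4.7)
The plan is to establish the three assertions of the proposition in sequence, relying on two basic identities: the integrality relation $\alpha^2 = \alpha - 1$ (visible from $\Tr(\alpha) = 1$ and $\Nr(\alpha) = (1 - i^2)/4 = 1$), and the commutation rule $j\alpha = j - \alpha j$, obtained from $ji = -ij$ and $\alpha = (1+i)/2$. First I identify $\mathcal{O}_1$ with $O_K[\alpha, j]$: because $i = 2\alpha - 1 \in O_K[\alpha,j]$, the right-hand side contains $\mathcal{O} = O_K[i,j]$ and $\alpha$, hence contains $\mathcal{O}_1 = \mathcal{O}[\alpha]$; the reverse containment is immediate.

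Next I produce the free basis. Let $S = O_K \cdot 1 + O_K \cdot \alpha + O_K \cdot j + O_K \cdot \alpha j$. To show $S$ is closed under multiplication it suffices to reduce each product of two basis vectors, and every reduction boils down to sliding $\alpha$ past $j$ via $j\alpha = j - \alpha j$ followed by an application of $\alpha^2 = \alpha - 1$ or $j^2 = \sqrt{2}$. The most involved case is $(\alpha j)^2 = \alpha(j\alpha)j = \alpha(j - \alpha j)j = \sqrt{2}\,\alpha - \sqrt{2}\,\alpha^2 = \sqrt{2}$. For $K$-linear independence of $\{1,\alpha, j, \alpha j\}$, the matrix expressing these elements in terms of the standard basis $\{1,i,j,ij\}$ is block diagonal with two copies of $\left(\begin{smallmatrix}1 & 0 \\ 1/2 & 1/2\end{smallmatrix}\right)$, and its determinant $1/4$ is nonzero.

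For the discriminant, I compute the $4 \times 4$ matrix of reduced traces of pairwise products of the basis elements. It is block diagonal, since $\Tr(j) = \Tr(\alpha j) = \Tr(j\alpha) = \Tr(\alpha j \cdot \alpha) = 0$ makes every trace pairing between the ``scalar'' subspace $O_K + O_K \alpha$ and the ``$j$-part'' $O_K j + O_K \alpha j$ vanish. The two diagonal blocks work out to $\left(\begin{smallmatrix}2 & 1 \\ 1 & -1\end{smallmatrix}\right)$ and $\left(\begin{smallmatrix}2\sqrt{2} & \sqrt{2} \\ \sqrt{2} & 2\sqrt{2}\end{smallmatrix}\right)$, with determinants $-3$ and $6$; the product $-18$ generates the ideal $(\sqrt{2})^2 (3)^2$ in $O_K$, whose square root (per the convention introduced just before the proposition) is the ideal $(3\sqrt{2})$. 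The main obstacle is the closure verification in the middle step: because $\alpha$ and $j$ do not commute, each product requires repeated bookkeeping with $j\alpha = j - \alpha j$ and $\alpha^2 = \alpha - 1$, and one must track signs carefully---but no conceptual difficulty arises.
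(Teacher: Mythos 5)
Your proposal is correct and follows essentially the same route as the paper's proof: identify $\O_1=O_K[\alpha,j]$ via $i=2\alpha-1$, verify closure of the span of $1,\alpha,j,\alpha j$ using the relations $\alpha^2=\alpha-1$, $j^2=\sqrt{2}$, $j\alpha=j-\alpha j$, and obtain the discriminant from the $4\times 4$ reduced-trace matrix (your block computation with determinants $-3$ and $6$ matches the paper's use of $\tr(\alpha)=1$ and $\tr(j\alpha j)=\sqrt{2}$). You merely spell out the linear-independence and trace-matrix details that the paper leaves implicit.
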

\begin{proof}
Since~$i = 2\alpha - 1$, clearly~$\O[\alpha] = O_K[i,j,\alpha] = O_K[\alpha,j]$. To show that this module is equal to~$O_K+O_K \alpha + O_K j + O_K \alpha j$, it suffices to note that~$j^2 = \sqrt{2}$,
$$\alpha^2=\alpha{{-1}}$$
and
$$j \alpha = j - \alpha j.$$
The claim on the discriminant of~$\O_1$  then follows from computing the determinant of the $4\times 4$ traces matrix, using~$\tr(\alpha) = 1$ and~$\tr(j \alpha j) = \sqrt{2}$.
\end{proof}

Now let
\begin{equation}\label{gammadef}
\gamma = \frac{1}{6}(3+i)\left[1  - (1+\sqrt{2})j\right]
\end{equation}
and consider the~$O_K$-module
$$\QQ = O_K + O_K \alpha + O_K \gamma + O_K \alpha \gamma.$$
\begin{prop}\label{444}
The module~$\QQ$ is a maximal order of~$D_B$.  Moreover, $\QQ$
contains~$\O_1$.
\end{prop}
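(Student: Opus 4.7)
The plan is to verify three claims in sequence: (a) $\QQ$ is closed under multiplication, hence a subring of $D_B$; (b) $\O_1 \sub \QQ$; (c) the discriminant of $\QQ$ generates the same ideal as $\disc(D_B) = (\sqrt{2})$, so by the criterion recalled at the start of \Sref{sec:5} the order $\QQ$ is maximal. That $\QQ$ is a full $O_K$-lattice spanning $D_B$ over $K$ then follows from (b), since $\O_1$ already spans $D_B$. For (a), products involving only $\alpha$'s reduce via $\alpha^2 = \alpha - 1$ from \Pref{p5.1}, so the essential new data are identities for $\gamma^2$ and $\gamma\alpha$. Expanding $\gamma$ in the $K$-basis $\{1,i,j,ij\}$ yields $\Tr_D(\gamma) = 1$ and $\Nr_D(\gamma) = -(1+\sqrt{2})$, which gives the minimal polynomial identity $\gamma^2 = \gamma + (1+\sqrt{2})$. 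For $\gamma\alpha$, the symplectic involution gives $(\gamma\alpha)^* = \alpha^*\gamma^* = (1-\alpha)(1-\gamma)$; combined with the direct check $\Tr_D(\alpha\gamma) = 0$, this forces $\gamma\alpha = -(1-\alpha)(1-\gamma) = -1 + \alpha + \gamma - \alpha\gamma$. All remaining basis products are algebraic consequences of these three identities.

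For (b), the crucial observation is $(1+\alpha)(2-\alpha) = \Nr_D(1+\alpha) = 3$ in $O_K$, following from $\Tr_D(\alpha) = \Nr_D(\alpha) = 1$ (so $(1+\alpha)^* = 2-\alpha$). Multiplying the defining relation $3\gamma = (1+\alpha)[1-(1+\sqrt{2})j]$ on the left by $(2-\alpha)/3$ yields $(2-\alpha)\gamma = 1 - (1+\sqrt{2})j$, whence $(1+\sqrt{2})j = 1 - 2\gamma + \alpha\gamma$. Since $\sqrt{2}-1 = (1+\sqrt{2})^{-1}$ is a unit in $O_K$, this presents $j$ as an explicit $O_K$-combination of the basis of $\QQ$; a parallel manipulation places $\alpha j$ in $\QQ$, so $\O_1 = O_K[\alpha,j] \sub \QQ$.

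For (c), I would assemble the $4\times 4$ matrix $\bigl(\Tr_D(e_ie_j)\bigr)$ in the basis $e_1,\ldots,e_4 = 1,\alpha,\gamma,\alpha\gamma$, using the identities from (a) together with $(\alpha\gamma)^2 = 1+\sqrt{2}$ (immediate from $\Tr_D(\alpha\gamma) = 0$ and $\Nr_D(\alpha\gamma) = -(1+\sqrt{2})$), and evaluate its determinant. The result factors as $-2(3+2\sqrt{2})^2$; since $3+2\sqrt{2} = (1+\sqrt{2})^2$ is a unit of $O_K$, this generates the ideal $(2) = (\sqrt{2})^2$, so that the reduced discriminant of $\QQ$ is the ideal $(\sqrt{2}) = \disc(D_B)$ (the latter by \Pref{42a}), proving maximality. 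The main obstacle is recognizing the unit factor $(1+\sqrt{2})^2$ hidden in the Gram determinant: the element that emerges is not literally $\sqrt{2}$, and without extracting this unit the comparison with $\disc(D_B)$ is obscured. The closure check in (a), though individually routine, requires patient bookkeeping owing to the non-commutativity of $D_B$.
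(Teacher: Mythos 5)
Your proposal is correct and takes essentially the same route as the paper: it verifies the same three closure relations $\alpha^2=-1+\alpha$, $\gamma^2=(1+\sqrt{2})+\gamma$, $\gamma\alpha=-1+\alpha+\gamma-\alpha\gamma$ (your derivation via traces, norms and the symplectic involution is just a cleaner way to check them), obtains $\O_1\sub\QQ$ from the same identity $(1+\sqrt{2})j=1-2\gamma+\alpha\gamma$, and concludes maximality by computing the trace-form discriminant. Your Gram determinant $-2(3+2\sqrt{2})^2=-2(1+\sqrt{2})^4$ is correct, so up to the unit $(1+\sqrt{2})^4$ the reduced discriminant is indeed $(\sqrt{2})=\disc(D_B)$, exactly as in the paper's (terser) argument.
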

\begin{proof}
First note that
$$j =  (1-\sqrt{2}) (- 1  + 2 \gamma - \alpha \gamma),$$
so that~$\O \sub \O_1 \sub \QQ$.

To prove that~$\QQ$ is an order it suffices to show it is closed under multiplication, which follows by verifying the relations:
\begin{eqnarray*}
\alpha^2 & = & {{-1}} + \alpha \\
 \gamma^2 & = & (1+\sqrt{2})  +  \gamma \\
\gamma  \alpha & = & - 1 +  \alpha +  \gamma - \alpha \gamma.
\end{eqnarray*}
Maximality of~$\QQ$ follows by computation of the discriminant, which turns out to be~$\sqrt{2}$.
\end{proof}

Also let~$\gamma' = i \gamma i^{{{-1}}} = \frac{1}{6}(3+i)\left[1  + (1+\sqrt{2})j\right]$, and
$$\QQ' = O_K + O_K \alpha + O_K \gamma' + O_K \alpha \gamma'.$$

Notice that~$\QQ' = i \QQ i^{{{-1}}}$  is conjugate to~$\QQ$.
\begin{cor}
The module~$\QQ'$ is a maximal order containing~$\O_1$.
\end{cor}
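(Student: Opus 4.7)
The plan is to exploit the fact that by construction $\QQ' = i \QQ i^{-1}$ is the conjugate of $\QQ$ by the unit $i \in D_B^{\times}$, combined with the result of \Pref{444} that $\QQ$ is a maximal order containing $\O_1$.

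First I would show that $\QQ'$ is a maximal order. Inner conjugation by $i$ is a $K$-algebra automorphism of $D_B$, as it fixes the center $K$ pointwise; it therefore sends the $O_K$-subring $\QQ$ bijectively onto the $O_K$-subring $\QQ'$, preserving the property of being a finitely generated $O_K$-module whose field of fractions is all of $D_B$. Hence $\QQ'$ is an order in the sense of \Dref{2.3}. Because the reduced trace is an intrinsic invariant of $D_B$ as a $K$-algebra, the Gram matrix $(\Tr_{D_B}(x_ix_j))$ of $\QQ'$ computed in the image of an $O_K$-basis of $\QQ$ coincides with the corresponding matrix for $\QQ$, so $\disc(\QQ') = \disc(\QQ) = \sqrt{2} = \disc(D_B)$. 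The discriminant criterion recalled at the start of \Sref{sec:5} then gives maximality of $\QQ'$.

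Second, I would verify the containment $\O_1 \subseteq \QQ'$. Since $\O_1 = O_K[\alpha,j]$ by \Pref{p5.1}, it suffices to check that both $O_K$-algebra generators $\alpha$ and $j$ lie in $\QQ'$. For $\alpha = \tfrac12(1+i)$, the element commutes with $i$, so $i\alpha i^{-1} = \alpha$; combined with $\alpha \in \QQ$ (from \Pref{444}), this gives $\alpha = i\alpha i^{-1} \in i\QQ i^{-1} = \QQ'$. For $j$, the defining anticommutation $ij = -ji$ yields $iji^{-1} = -j$, equivalently $i(-j)i^{-1} = j$; since $-j \in \QQ$, we conclude $j \in \QQ'$.

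There is no genuine obstacle in this argument: the only minor subtlety is to record explicitly that inner conjugation in $D_B$ is a $K$-algebra automorphism and therefore preserves both the order axioms and the reduced trace pairing used to compute the discriminant. Once this is noted, both halves of the corollary follow essentially by transport of structure from \Pref{444}.
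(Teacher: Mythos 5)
Your proposal is correct and follows essentially the same route as the paper: the paper's one-line proof ("immediate because $i\O_1 i^{-1}=\O_1$") is exactly your transport-of-structure argument, with your generator computations $i\alpha i^{-1}=\alpha$ and $iji^{-1}=-j$ being the explicit verification that $\O_1$ is stable under conjugation by $i$, whence $\O_1=i\O_1 i^{-1}\subseteq i\QQ i^{-1}=\QQ'$. Your additional remarks on the discriminant being preserved under the inner automorphism merely spell out why conjugates of maximal orders are maximal, which the paper leaves implicit.
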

\begin{proof}
This is immediate because~$i \O_1 i^{{{-1}}} = \O_1$.
\end{proof}
\forget %DDD
\begin{prop}
A direct proof that
$$\QQ^{\theta} = O_K + O_K \alpha + O_K \alpha j + O_K \gamma_{\theta}$$
is an order.
\end{prop}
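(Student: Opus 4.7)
The plan is to verify the three defining properties of an order from \Dref{2.3} directly from the explicit $O_K$-basis $\set{1,\alpha,\alpha j,\gamma_\theta}$: namely, that $\QQ^\theta$ contains $1$ and is closed under multiplication, that it is finitely generated as an $O_K$-module, and that its ring of fractions equals all of $D_B$.

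Finite generation is immediate from the presentation, and $1$ is one of the generators. To see that the ring of fractions equals $D_B$, one expands each of the four generators in the standard $K$-basis $\set{1,i,j,ij}$ and checks $K$-linear independence; this reduces to the non-vanishing of a single $4\times 4$ determinant depending on the explicit form of $\gamma_\theta$. In particular, since the first three generators span only a three-dimensional $K$-subspace of $D_B$, one needs $\gamma_\theta$ to have a non-zero component in the $ij$ (or $j$) direction.

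The substantive step is multiplicative closure. Some products are handled immediately using identities from \Sref{sec:5}: \Pref{p5.1} gives $\alpha^2=\alpha-1$, and the relation $j\alpha=j-\alpha j$ proved there yields
\[
(\alpha j)^2 = \alpha(j\alpha)j = \alpha j^2 - \alpha^2 j^2 = \sqrt{2}\,\alpha - \sqrt{2}(\alpha-1) = \sqrt{2}\in O_K.
\]
However, even the simple product $\alpha\cdot(\alpha j)=\alpha^2 j=(\alpha-1)j=\alpha j-j$ already forces $j\in\QQ^\theta$, and this can only come from the $\gamma_\theta$ summand. This is the crux: the parameter must be chosen so that $j$ (and by analogous reasoning $i$, or equivalently $ij$) is recoverable as an $O_K$-linear combination of the four generators. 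The main obstacle and the bulk of the work is then to verify the five products $\gamma_\theta^2$, $\alpha\gamma_\theta$, $\gamma_\theta\alpha$, $(\alpha j)\gamma_\theta$, and $\gamma_\theta(\alpha j)$, each expressed as an $O_K$-linear combination of $1,\alpha,\alpha j,\gamma_\theta$. These verifications are analogous to the identities $\gamma^2=(1+\sqrt{2})+\gamma$ and $\gamma\alpha=-1+\alpha+\gamma-\alpha\gamma$ recorded in the proof of \Pref{444}; they are mechanical given the explicit form of $\gamma_\theta$ and the defining relations of $D_B$, and once in hand they establish that $\QQ^\theta$ is closed under multiplication, completing the proof that $\QQ^\theta$ is an order.
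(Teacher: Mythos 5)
Your proposal is correct and follows essentially the same route as the paper: its proof consists precisely of listing the five products $\alpha\gamma_\theta$, $(\alpha j)\gamma_\theta$, $\gamma_\theta\alpha$, $\gamma_\theta(\alpha j)$, $\gamma_\theta^2$ expressed as $O_K$-combinations of $1,\alpha,\alpha j,\gamma_\theta$, which are exactly the verifications you single out as the crux (and your subsidiary computations, e.g.\ $(\alpha j)^2=\sqrt{2}$ and the need for $j\in\QQ^{\theta}$, are accurate and do go through with the explicit $\gamma_\theta$).
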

\begin{proof}
\begin{eqnarray*}
\alpha \theta \gamma_{\theta} & = & \theta\alpha - (1+\sqrt{2}) \alpha j - \theta \gamma_{\theta} \\
\alpha j \theta \gamma_{\theta} & = & {{-1}}- \alpha  +  \theta (1-\sqrt{2})\theta \gamma_{\theta} \\
\theta \gamma_{\theta} \alpha & = & - \theta  + (1+\sqrt{2}) \alpha j + 2 \theta \gamma_{\theta} \\
\theta \gamma_{\theta} \alpha j & = &  - (1+\sqrt{2}) + \alpha + \theta \alpha j - \theta (1-\sqrt{2})\theta \gamma_{\theta} \\
 \gamma_{\theta}^2 & = & \gamma_{\theta}  + (1+\sqrt{2}).
\end{eqnarray*}
\end{proof}
\forgotten

\begin{prop}
The only two maximal orders containing~$\O$ are~$\QQ$ and~$\QQ'$.
\end{prop}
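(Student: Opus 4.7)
The plan is a local--global argument. Since $O_K = \Z[\sqrt 2]$ is a principal ideal domain, any $O_K$-order in $D_B$ is determined by its completions $M_\mathfrak p$ at all primes $\mathfrak p$ of $O_K$. Thus producing a maximal order $M \supseteq \O$ amounts to choosing, at each prime $\mathfrak p$, a maximal order of $D_{B,\mathfrak p}$ containing $\O_\mathfrak p$.

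First I would show that every such $M$ automatically contains $\alpha = (1+i)/2$, and hence contains $\O_1$. At each prime $\mathfrak p \ne (\sqrt 2)$, the element $2$ is a unit in $O_{K,\mathfrak p}$, so $\alpha \in \O_\mathfrak p \sub M_\mathfrak p$. At $\mathfrak p = (\sqrt 2)$, \Pref{archram} and \Pref{42a} show that $D_{B,(\sqrt 2)}$ is a division algebra, whose unique maximal order is its valuation ring, consisting of all integral elements; since $\alpha$ satisfies $\alpha^2 - \alpha + 1 = 0$, it is integral, so $\alpha \in M_{(\sqrt 2)}$. Intersecting over all primes gives $\alpha \in M$.

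It therefore suffices to count maximal orders containing $\O_1$. By \Pref{p5.1} we have $\disc(\O_1) = 3\sqrt 2$, which differs from the reduced discriminant $\sqrt 2$ of $D_B$ only in its $(3)$-component. Hence $\O_{1,\mathfrak p}$ is already maximal for every $\mathfrak p \ne (3)$, and $M_\mathfrak p$ is forced to equal $\O_{1,\mathfrak p}$ there. The remaining local question takes place in the split algebra $D_{B,(3)} \cong \M_2(K_{(3)})$: the completion $\O_{1,(3)}$ has local discriminant $(3)$, identifying it as an Eichler order of level $(3)$. Such an order corresponds to an edge of the Bruhat--Tits tree of $\PGL_2(K_{(3)})$, and is contained in exactly the two maximal orders at the endpoints of that edge. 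Combining these local counts, at most two maximal orders of $D_B$ contain $\O$. Since $\QQ$ and $\QQ'$ are two such (by \Pref{444} and its corollary), the problem reduces to verifying $\QQ \ne \QQ'$, which I would do by a direct computation showing that $\gamma' - \gamma = (1+\sqrt 2)j + \tfrac13(1+\sqrt 2)ij$ introduces an irremovable coefficient $\pm 1/3$ when expanded in the $O_K$-basis $\{1, \alpha, \gamma, \alpha\gamma\}$ of $\QQ$, so that $\gamma' \notin \QQ$.

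\textbf{Main obstacle.} The technical crux is the local count at $(3)$: confirming that $\O_{1,(3)}$ is an Eichler order of level exactly $(3)$ and that such orders sit inside precisely two maximal orders. The most economical treatment invokes the structure theory of hereditary orders in $\M_2$ over a local field (or equivalently the Bruhat--Tits tree of $\PGL_2$). A self-contained alternative is to exhibit an explicit splitting $D_{B,(3)} \cong \M_2(K_{(3)})$---available because $\sqrt 2$ is a square in the residue field $\F_9$, so $j$ can be diagonalized---and verify the count of overorders by a direct matrix computation.
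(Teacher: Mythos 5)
Your argument is correct, but it takes a genuinely different route from the paper. The paper's proof is a direct global computation: it takes an arbitrary $y$ with $\O[y]$ an order, writes $y = \frac{1}{2}\bigl(x_0+\frac{x_1}{3}i+\frac{x_2}{\sqrt{2}}j+\frac{x_3}{3\sqrt{2}}ij\bigr)$, uses $\tr(y\O)\sub O_K$ to force $x_0,\dots,x_3\in\Z[\sqrt2]$, and then uses integrality of the norm to extract congruences modulo powers of $\sqrt2$ and modulo $3$; the sign ambiguity $\theta=\pm1$ in the mod-$3$ condition is exactly what produces the dichotomy $y\in\QQ$ versus $y\in\QQ'$, with $y$ exhibited explicitly in the basis $1,\alpha,\alpha j,\gamma_\theta$. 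Your local--global approach instead reduces everything to the prime $(3)$, where $\disc(\O_1)=3\sqrt2$ exceeds $\disc(D_B)=\sqrt2$: the first step (every maximal order containing $\O$ contains $\alpha$, hence $\O_1$) is sound, since $2$ is invertible away from $(\sqrt2)$ and at $(\sqrt2)$ the unique maximal order of the local division algebra contains every integral element; and the identification of $\O_{1,(3)}$ as an Eichler order of level $(3)$, contained in exactly the two maximal orders at the endpoints of an edge of the Bruhat--Tits tree, is the standard classification of hereditary orders in $\M_2$ over a local field. What your approach buys is conceptual transparency -- it explains \emph{why} there are exactly two maximal orders (the discrepancy between $\disc(\O_1)$ and $\disc(D_B)$ is concentrated at the single split prime $(3)$) -- at the cost of invoking nontrivial local structure theory, which you rightly flag as the crux; the paper's computation is longer but elementary and self-contained, and in addition produces the explicit $O_K$-coordinates of any $y$. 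Two small remarks: in your distinctness check the non-integral coefficients actually appear on $1$ and $\alpha$ (both $2/3$) rather than as a bare $\pm 1/3$ on $\alpha\gamma$, though the conclusion $\gamma'\notin\QQ$ stands; a quicker argument is the paper's own observation that $\gamma+\gamma'=1+\frac{i}{3}$ has reduced norm $4/3$, hence is not integral, so no order contains both $\gamma$ and $\gamma'$.
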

\begin{proof}
Let~$y \in D_B$
be an element such that~$\O[y]$ is an order. Write
$$y = \frac{1}{2}(x_0+\frac{x_1}{3}i+\frac{x_2}{\sqrt{2}}j+\frac{x_3}{3\sqrt{2}}ij),$$
where~$x_0,x_1,x_2,x_3 \in \Q(\sqrt{2})$. Since~$\tr(y \O) \sub O_K$, we immediately conclude that in fact~$x_0,x_1,x_2,x_3 \in \Z[\sqrt{2}]$. Furthermore, the norm of~$y$ is an algebraic integer, proving that~$12\sqrt{2}$ divides
$$-3\sqrt{2}x_0^2-\sqrt{2}x_1^2+3x_2^2+x_3^2$$
in~$\Z[\sqrt{2}]$. Working modulo powers of~$\sqrt{2}$, we conclude as in \Pref{archram} that
$x_3 = x_2+2\sqrt{2}x_3'$,~$x_1 = x_0+2x_1'$,~$x_2 = \sqrt{2} x_2'$ for suitable~$x_1',x_2',x_3' \in \Z[\sqrt{2}]$. The remaining condition is that~$(x_0-x_1')^2 \equiv \sqrt{2}(x_2'-x_3')^2 \pmod{3}$, so in fact
$$x_0 = x_1' + \theta(1-\sqrt{2})(x_2'-x_3') + 3x_0'$$
for some~$x_0' \in \Z[\sqrt{2}]$ where~$\theta = \pm 1$. But then%
\begin{eqnarray*}
y-x_0' & = & \frac{1}{2}(1+i)(x_0'+x_1') + \frac{1}{2}(j+ij)x_3'  \\
    & & \qquad  + \frac{1}{6}\left[\theta(1-\sqrt{2})(3+i)+3j+ij\right](x_2'-x_3') \\
& = & (x_0'+x_1')\alpha + x_3' \alpha j + (x_2'-x_3')(1-\sqrt{2})\theta \gamma_{\theta},
\end{eqnarray*}
where~$\gamma_{+1} = \gamma$ and~$\gamma_{{{-1}}} = \gamma'$. Thus~$y$ is an element of~$\QQ$ (if~$\theta = 1$) or of~$\QQ'$ (if~$\theta = {{-1}}$).
\end{proof}

Note that~$\QQ + \QQ'$ is not an order, since~$\gamma + \gamma' = 1+\frac{i}{3}$ is not an algebraic integer.

\section{The Bolza order}\label{sec:6}

In order to present the triangle group~$\Delta_{(3,3,4)}$ as a quotient of the group of units in a maximal order, we make the following change of variables.
Let
\begin{equation}\label{b_re}
\beta =\frac{1}{6}\left(3+(1+2\sqrt{2})i- 2 ij\right).
\end{equation}

Since
$$\beta  =  \alpha(1- (1-\sqrt{2})\gamma)$$
(where $\gamma$ is defined in \eq{gammadef}) and
$$\gamma  =  - (1+\sqrt{2})(1- \beta  + \alpha\beta),$$
we have that
$$\qb := O_K[\alpha,\beta] = \QQ.$$
In particular,~$\qb$ is a maximal order by \Pref{444}.

One has
\begin{equation}
\label{45b}
\alpha\beta=-\frac{1}{6}\left(3\sqrt{2}-(2+\sqrt{2})i+3j-ij\right).
\end{equation}

\begin{theorem}\label{43b}
The order~$\qb$ is spanned as a module over~$O_K$ by the basis
$\{1,\alpha,\beta,\alpha\beta\},$ so that
\begin{equation}
\label{45}
\qb=O_K1\oplus O_K\alpha\oplus O_K\beta\oplus O_K\alpha\beta.
\end{equation}
\end{theorem}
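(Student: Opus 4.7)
The plan is to leverage the identification $\qb = \QQ$ established immediately before the theorem, together with the basis $\{1,\alpha,\gamma,\alpha\gamma\}$ of $\QQ$ from \Pref{444}. Since both sides of the claimed equality~\eqref{45} live inside $D_B$ and I already know $\qb$ is a rank-$4$ free $O_K$-module, the task reduces to showing that $\{1,\alpha,\beta,\alpha\beta\}$ and $\{1,\alpha,\gamma,\alpha\gamma\}$ span the same $O_K$-submodule of $D_B$; linear independence of the new set will then be automatic from the rank.

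First I would verify the inclusion ``$\supseteq$'' in~\eqref{45}. The explicit identity $\beta = \alpha(1-(1-\sqrt{2})\gamma)$ expresses $\beta$ as an $O_K$-linear combination of $\alpha$ and $\alpha\gamma$, so $\beta \in \qb$, and then $\alpha\beta \in \qb$ as well. Hence $O_K\cdot 1 + O_K\alpha + O_K\beta + O_K\alpha\beta \,\subseteq\, \qb$.

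For the reverse inclusion, I would use the explicit expression $\gamma = -(1+\sqrt{2})(1-\beta+\alpha\beta)$, which exhibits $\gamma$ as an $O_K$-linear combination of $1,\beta,\alpha\beta$. For $\alpha\gamma$, multiplying the above by $\alpha$ gives
\[
\alpha\gamma = -(1+\sqrt{2})(\alpha - \alpha\beta + \alpha^2\beta).
\]
The minimal relation $\alpha^2 = \alpha - 1$ (which follows from $\alpha = \frac{1}{2}(1+i)$ and $i^2=-3$, as also noted in \Pref{p5.1}) then lets me rewrite $\alpha^2\beta = \alpha\beta - \beta$, yielding
\[
\alpha\gamma = -(1+\sqrt{2})(\alpha - \beta),
\]
which visibly lies in the proposed span. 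Thus $\qb \,\subseteq\, O_K\cdot 1 + O_K\alpha + O_K\beta + O_K\alpha\beta$, giving equality of $O_K$-modules.

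Finally, $O_K$-linear independence of $\{1,\alpha,\beta,\alpha\beta\}$ follows at once: since $\qb$ is a projective $O_K$-module of rank $4$ (being an order in the $4$-dimensional $K$-algebra $D_B$) and is generated by these four elements, no nontrivial $O_K$-linear relation among them is possible. The only genuinely computational step is checking that $\alpha^2 = \alpha - 1$ and assembling the two substitutions above, so I do not anticipate any serious obstacle; the whole argument is essentially a change-of-basis verification made possible by the two transition formulas between $\beta$ and $\gamma$ recorded just before the theorem.
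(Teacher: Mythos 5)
Your proof is correct, but it follows a genuinely different route from the paper's. The paper proves \Tref{43b} intrinsically: it sets $M=O_K1+O_K\alpha+O_K\beta+O_K\alpha\beta$, verifies by direct computation the relations $\alpha^2=-1+\alpha$, $\beta^2=-1+\beta$, and $\beta\alpha=(-1-\sqrt2)+\alpha+\beta-\alpha\beta$, deduces from them that $\alpha M,\beta M\subseteq M$, and concludes that $M$ is multiplicatively closed and hence equals $\qb=O_K[\alpha,\beta]$; the basis $\{1,\alpha,\gamma,\alpha\gamma\}$ of $\QQ$ plays no role there. You instead exploit the identification $\qb=\QQ$ and treat the statement as a change of basis: the transition formulas $\beta=\alpha\bigl(1-(1-\sqrt2)\gamma\bigr)$ and $\gamma=-(1+\sqrt2)(1-\beta+\alpha\beta)$, together with $\alpha^2=\alpha-1$ from \Pref{p5.1}, give $\gamma,\alpha\gamma\in O_K1+O_K\alpha+O_K\beta+O_K\alpha\beta$ (your derived identity $\alpha\gamma=-(1+\sqrt2)(\alpha-\beta)$ does check out against the explicit coordinates), and the reverse inclusion plus the rank argument over the PID $O_K$ finishes the proof, including the directness of the sum, which the paper's printed proof leaves implicit. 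What each approach buys: yours requires almost no new computation beyond the two transition identities already recorded in \Sref{sec:6} (whose verification, to be fair, is also only asserted there), while the paper's computation produces the multiplication table of $\qb$ in the basis $\{1,\alpha,\beta,\alpha\beta\}$, which is reused later (e.g.\ for $(\alpha\beta)^2=-1-\sqrt2\,\alpha\beta$ in Section~\ref{nine} and for the presentation of $\qb$ in Remark~\ref{pres}); if you take your route, those relations would still need to be established separately for the later sections.
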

\begin{proof}
Let~$M=O_K1 + O_K\alpha+ O_K\beta+ O_K\alpha\beta$.  The following
relations are verified by computation:
\begin{enumerate}
\item~$\alpha^2={{-1}}+\alpha$, % verified.
\item~$\beta^2={{-1}}+\beta$, % verified.
\item~$\beta\alpha=({{-1}}-\sqrt{2})+\alpha+\beta-\alpha\beta$; % verified.
\end{enumerate}
and thus~$\alpha(\alpha\beta) = -\beta + \alpha\beta \in M$ and
\[
\beta(\alpha\beta) =
({{-1}}-\sqrt{2})\beta+\alpha\beta+\beta^2-\alpha\beta^2 = {{-1}} +\alpha
-\sqrt{2}\beta \in M.
\]
It follows that~$\alpha M, \beta M \sub M$, so~$M$ is
closed under multiplication and is therefore equal to~$\qb$.

\forget
%% Details should be omitted.
%
To show that the sum defining~$\qb'$ is direct, notice that
\begin{eqnarray*}
& &x_0 + x_1 \alpha + x_2 \beta + x_3 \alpha \beta = \\
& &\qquad (x_0 + \frac{1}{2}x_1 + \frac{1}{2} x_2 - \frac{\sqrt{2}}{2} x_3 ) \\
&& \qquad +(\frac{1}{2}x_1  + \frac{1+2\sqrt{2}}{6}x_2 + \frac{2+\sqrt{2}}{6} x_3 )i \\
&& \qquad - \frac{1}{2} x_3 j
+(\frac{1}{3}x_2 + \frac{1}{6}x_3) ij,\end{eqnarray*}
so if this sum is zero, we have that~$x_3 = 0$ from the coefficient of~$j$,~$x_2 = 0$ from the coefficient of~$ij$, and then~$x_1 =0$ from the coefficient of~$i$ and~$x_0 = 0$ from the free coefficient.
\forgotten
\end{proof}

\forget
\begin{proposition}
\label{44}
One has the following relations:
\begin{enumerate}
\item
$\alpha^2={{-1}}+\alpha$ % verified.
\item
$\beta^2={{-1}}+\beta$ % verified.
\item
$ \gamma_{\theta}^2={{-1}}-\sqrt{2}\theta \gamma_{\theta}$
\item~$\beta\alpha=({{-1}}-\sqrt{2})+\alpha+\beta-\alpha\beta$ % verified.
 \item~$\alpha\theta \gamma_{\theta}=-\beta+\theta \gamma_{\theta}$
 \item~$\theta \gamma_{\theta}\alpha={{-1}}-\sqrt{2}\alpha+\beta$
 \item~$\beta\theta \gamma_{\theta}={{-1}}+\alpha-\sqrt{2}\beta$
 \item~$\theta \gamma_{\theta}\beta=-\alpha+\theta \gamma_{\theta}$.
\end{enumerate}
\end{proposition}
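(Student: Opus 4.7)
The plan is to verify each of the eight identities by a direct computation in $D_B$, using the defining relations $i^2=-3$, $j^2=\sqrt{2}$, $ji=-ij$ together with the explicit formulas
\[
\alpha=\tfrac12(1+i),\qquad \beta=\tfrac16\bigl(3+(1+2\sqrt{2})i-2ij\bigr),\qquad \gamma_\theta=\tfrac16(3+i)\bigl(1-\theta(1+\sqrt{2})j\bigr),
\]
where $\theta=\pm 1$ with $\gamma_{+1}=\gamma$ and $\gamma_{-1}=\gamma'$.

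First I would observe that relations (1), (2), and (4) involve only $\alpha$ and $\beta$ and have already been verified in the course of proving \Tref{43b}; nothing new is required for these three items. The remaining five identities (3) and (5)--(8) all involve $\gamma_\theta$, and I would organise them in two bundles.

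For (3), I would compute $\gamma_\theta^2$ by expanding $(3+i)\bigl(1-\theta(1+\sqrt{2})j\bigr)\cdot(3+i)\bigl(1-\theta(1+\sqrt{2})j\bigr)$. The crucial move is to push the inner $j$ past the middle $(3+i)$ using $j(3+i)=(3-i)j$, which collapses $(3-i)(3+i)=9-i^2=12$, while $(1+\sqrt{2})^2 j^2=(3+2\sqrt{2})\sqrt{2}=4+3\sqrt{2}$ handles the quadratic-in-$j$ part; after dividing by $36$ and collecting, the result should match the displayed expression $-1-\sqrt{2}\theta\gamma_\theta$. A clean cross-check is to read off $\Tr(\gamma_\theta)$ and $\Nr(\gamma_\theta)$ directly from the $\{1,i,j,ij\}$-expansion of $\gamma_\theta$ via \Dref{Tr+Nrd} and compare with the characteristic polynomial $\gamma_\theta^2=\Tr(\gamma_\theta)\gamma_\theta-\Nr(\gamma_\theta)$.

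For (5)--(8), I would expand each of the four products $\alpha\gamma_\theta$, $\gamma_\theta\alpha$, $\beta\gamma_\theta$, and $\gamma_\theta\beta$ in the $O_K$-basis $\{1,i,j,ij\}$ of $D_B$, using $j(a+bi)=(a-bi)j$ each time a $j$ must be moved past an $i$, and then re-express the outcome in the generators $\{1,\alpha,\beta,\gamma_\theta\}$ via the explicit change-of-basis formulas already available from the proofs of \Pref{p5.1}, \Pref{444}, and \Tref{43b}. A helpful shortcut is the $K$-algebra involution of $D_B$ fixing $i$ and sending $j\mapsto -j$: it interchanges $\gamma_\theta\leftrightarrow\gamma_{-\theta}$, so each identity verified for $\theta=+1$ implies the case $\theta=-1$, provided one correctly tracks the image of $\beta$ (which is not fixed) under the involution.

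The main obstacle is not conceptual but purely the bookkeeping of signs produced by the anticommutation $ji=-ij$, most acutely in (7) and (8) where $\beta$ itself carries an $ij$-component; apart from this the proof is a straightforward, if tedious, mechanical expansion.
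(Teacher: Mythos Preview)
Your identification of $\gamma_\theta$ with $\tfrac{1}{6}(3+i)\bigl(1-\theta(1+\sqrt{2})j\bigr)$, the element from \eq{gammadef} and its conjugate, is the problem: with that definition the identities (3) and (5)--(8) are simply false, so a direct expansion along your lines will not close. For instance \Pref{444} records $\gamma^2=(1+\sqrt{2})+\gamma$, not $\gamma^2=-1-\sqrt{2}\gamma$; and one computes $\alpha\gamma=\tfrac{1}{3}\bigl(i-(1+\sqrt{2})ij\bigr)$, which carries no $j$-term, whereas $-\beta+\gamma$ has the term $-\tfrac{1}{2}(1+\sqrt{2})j$, so (5) fails too. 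Your cross-check via $\Tr$ and $\Nr$ would in fact have flagged this immediately: the element $\tfrac{1}{6}(3+i)(1-(1+\sqrt{2})j)$ has trace $1$ and norm $-(1+\sqrt{2})$, not trace $-\sqrt{2}$ and norm $1$ as relation (3) demands.

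In this proposition the intended meaning is $\theta\gamma_\theta=\alpha\beta$ (note that $\alpha\beta$ has trace $-\sqrt{2}$ and norm $1$, which is exactly what (3) encodes). With that reading the paper's argument is also organised differently from your plan for (5)--(8): after checking (1)--(4) by direct expansion in $i,j$ (your observation that (1), (2), (4) are already in \Tref{43b} is correct), it derives (5)--(8) \emph{algebraically} from (1)--(4) and the definition $\theta\gamma_\theta=\alpha\beta$, with no further coordinate computations. Thus (5) is just $\alpha(\alpha\beta)=\alpha^2\beta=(-1+\alpha)\beta=-\beta+\alpha\beta$ by (1), and (8) is $(\alpha\beta)\beta=\alpha\beta^2=-\alpha+\alpha\beta$ by (2); relations (6) and (7) follow by first applying (4) to rewrite $\beta\alpha$ and then feeding in (1), (2), (5), (8). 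This is considerably cleaner than expanding four products in $\{1,i,j,ij\}$ and changing basis back. Your proposed involution shortcut $j\mapsto-j$ is also miscalibrated here: since $\beta$ itself carries an $ij$-term, the involution moves $\alpha\beta$ and does not simply interchange $\theta$ with $-\theta$.
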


\begin{proof}[Proof of Proposition~\ref{44}]
We first check formula~\eqref{45b}.  We have
\[
\begin{aligned}
\alpha\beta&= \frac{1}{12}(1+i)\left(3+(1+2\sqrt{2})i+2ij\right)
\\&= \frac{1}{12}\left(3+(1+2\sqrt{2})i+2ij+3i+(1+2\sqrt{2})ii+2iij\right)
\\&= \frac{1}{12}\left(3+(1+2\sqrt{2})i+2ij+3i-3(1+2\sqrt{2})-6j\right)
\\&= \frac{1}{12}\left(3+(4+2\sqrt{2})i+2ij-3-6\sqrt{2}-6j\right)
\\&=\frac{1}{6}\left((2+\sqrt{2})i+ij-3\sqrt{2}-3j\right),
\end{aligned}
\]
as required.  Next,
$\alpha^2=\frac{1}{4}(1+i)^2=\frac{1}{4}(1-3+2i)=\frac{1}{2}(i{{-1}})$ and
${{-1}}+\alpha=\frac{1}{2}(-2+1 + i)=\frac{1}{2}(i{{-1}})$, establishing
relation~(1).  Let~$r=\sqrt{2}$.  One has
\[
\begin{aligned}
\beta^2 &= \frac{1}{36}\left(3+(1+2r)i+2ij\right)^2
\\&=
\frac{1}{36}\left(9+(1+2r)^2i^2+4ijij+6(1+2r)i+12ij+2(1+2r)(iij+iji)\right)
\\&=
\frac{1}{36}\left(9-3(9+4r)+12jj+6(1+2r)i+12ij\right)
\\&=
\frac{1}{36}\left({{-1}}8{{-1}}2r+12r+6(1+2r)i+12ij\right)
\\&= \frac{1}{6}\left(-3+(1+2r)i+2ij\right) \\&={{-1}}+\beta,
\end{aligned}
\]
establishing the relation~(2).  To check~(3), recall that
$r=\sqrt{2}$.  We have
\[
\begin{aligned}
 \gamma_{\theta}^2 &= (\alpha\beta)^2 \\&=
\frac1{36}\left(3\sqrt{2}-(2+\sqrt{2})i+3j-ij\right)^2 \\&=
\frac1{36}\left(18-3(2+r)^2+9r+ijij-6r(2+r)i+18rj-6rij\right.  \\&
\left.\quad\quad\quad
{}^{\phantom{2}}-3(2+r)(ij+ji)+(2+r)(iij+iji)-3(jij+ijj)\right) \\&=
\frac1{36}\left(18-3(2+r)^2+9r+ijij-6r(2+r)i+18rj-6rij\right)
\\&=\frac1{36}\left(18-3(6+4r)+12r-6r(2+r)i+18rj-6rij\right)
\\&=\frac1{6}\left(-(2r+2)i+3rj-rij\right).
\end{aligned}
\]
Meanwhile,
\[
\begin{aligned}
{{-1}}-\sqrt{2}\theta \gamma_{\theta}&={{-1}} +\frac{r}{6}\left(3r-(2+r)i+3j-ij\right)
\\&= {{-1}} +\frac{1}{6}\left(3r^2-(2r+r^2)i+3rj-rij\right)
\\&= {{-1}} +\frac{1}{6}\left(6-(2r+2)i+3rj-rij\right)
\\&= \frac{1}{6}\left(-(2r+2)i+3rj-rij\right)
\\&=  \gamma_{\theta}^2
\end{aligned}
\]
by the previous calculation, establishing the relation~(3).  Let us
check the relation (4).  We have
$\beta=\frac{1}{6}\left(3+(1+2r)i+2ij\right)$ and
$\alpha=\frac12\left(1+i\right)$.  Therefore the left-hand side of (4)
is
\[
\begin{aligned}
\beta\alpha &= \frac1{12}\left(3+(1+2r)i+2ij\right)\left(1+i\right)
\\&= \frac1{12}\left(3+ 3i + (1+2r)i+(1+2r)ii+2ij + 2iji\right)
\\&=\frac1{12}\left(3+(4+2r)i-3(1+2r)+2ij-2iij\right)
\\&=\frac1{12}\left((4+2r)i-6r+2ij+6j\right)
\\&=\frac1{6}\left((2+r)i-3r+ij+3j\right).
\end{aligned}
\]
Meanwhile, the right-hand side of (4) is~$\text{rhs}=
({{-1}}-r)+\alpha+\beta-\theta \gamma_{\theta}$ or more explicitly
\[
\begin{aligned}
\text{rhs}&=
\frac{1}{6}\left(6({{-1}}-r)+3\left(1+i\right)+3+(1+2r)i+2ij+3r-(2+r)i+3j-ij\right)
\\&= \frac{1}{6}\left(-6-6r+3+3i+3+(1+2r)i+2ij+3r-(2+r)i+3j-ij\right)
\\&= \frac{1}{6}\left(-3r+3i+(1+2r)i+ij-(2+r)i+3j\right) \\&=
\frac{1}{6}\left(-3r+(2+r)i+ij+3j\right) \\&= \beta\alpha
\end{aligned}
\]
by the previous calculation, establishing the relation~(4).  To check
the relation~(5) asserting that~$\alpha\theta \gamma_{\theta}=-\beta+\theta \gamma_{\theta}$, note
that by definition~$\alpha\theta \gamma_{\theta}=\alpha\alpha\beta= ({{-1}}+\alpha)\beta$
by relation (1), and therefore~$\alpha\theta \gamma_{\theta}=-\beta+\theta \gamma_{\theta}$ by
definition of~$\theta \gamma_{\theta}$.

To check relation~(6) to the effect that
$\theta \gamma_{\theta}\alpha={{-1}}-\sqrt{2}\alpha+\beta$, note that
$\theta \gamma_{\theta}\alpha=\alpha\beta\alpha=
\alpha\left(({{-1}}-r)+\alpha+\beta-\theta \gamma_{\theta}\right)$ by relation~(4).
Applying the previously derived relations, we obtain
\[
\begin{aligned}
\theta \gamma_{\theta}\alpha &= -\alpha-r\alpha+\alpha^2+\alpha\beta-\alpha\theta \gamma_{\theta}
\\&=-\alpha-r\alpha+({{-1}}+\alpha)+\alpha\beta-\alpha\theta \gamma_{\theta}
\\&=-r\alpha{{-1}}+\alpha\beta-\alpha\theta \gamma_{\theta}
\\&=-r\alpha{{-1}}+\theta \gamma_{\theta}-\alpha\theta \gamma_{\theta}
\\&=-r\alpha{{-1}}+\theta \gamma_{\theta}-(-\beta+\theta \gamma_{\theta})
\\&=-r\alpha{{-1}}+\beta,
\end{aligned}
\]
establishing the relation~(6).

We will check (8) before (7).  To establish the relation~(8) to the
effect that~$\theta \gamma_{\theta}\beta=-\alpha+\theta \gamma_{\theta}$, note that
$\theta \gamma_{\theta}\beta=\alpha\beta^2$, and therefore by relation~(2), we have
$\theta \gamma_{\theta}\beta=\alpha({{-1}}+\beta)=-\alpha+\alpha\beta=-\alpha+\theta \gamma_{\theta}$ by
definition of~$\theta \gamma_{\theta}$, establishing (8).  Finally, to establish the
relation~(7) to the effect that~$\beta\theta \gamma_{\theta}={{-1}}+\alpha-r\beta$, note
that by relation~(4), we have~$\beta\theta \gamma_{\theta}=\beta\alpha\beta=
\left(({{-1}}-r)+\alpha+\beta-\theta \gamma_{\theta}\right)\beta$.  Therefore
\[
\begin{aligned}
\beta\theta \gamma_{\theta}&= -\beta-r\beta+\alpha\beta+\beta^2-\theta \gamma_{\theta}\beta
\\&=-\beta-r\beta+\theta \gamma_{\theta}+\beta^2-\theta \gamma_{\theta}\beta
\\&=-\beta-r\beta+\theta \gamma_{\theta}+\beta^2+\alpha-\theta \gamma_{\theta}
\\&=-\beta-r\beta+\beta^2+\alpha \\&=-\beta-r\beta{{-1}}+\beta+\alpha
\\&=-r\beta{{-1}}+\alpha,
\end{aligned}
\]
establishing the relation~(7).
\end{proof}
\forgotten

\forget
\begin{proposition}
We have
\begin{equation}
\label{O inclusion} \mcal O\subseteq\qb.
\end{equation}
\end{proposition}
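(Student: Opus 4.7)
The plan is to compose two inclusions already established in this section. By \Pref{p5.1}, the order $\O_1 = O_K[\alpha,j]$ contains $\O = O_K[i,j]$: indeed $i = 2\alpha - 1$ lies in $\O_1$ by the definition \eqref{a_re} of $\alpha$, and $j$ is a generator of $\O_1$ by construction. By \Pref{444}, one has the further inclusion $\O_1 \sub \QQ$, established there via the explicit identity $j = (1-\sqrt{2})(-1 + 2\gamma - \alpha\gamma)$. Since $\qb = \QQ$ by the identification at the start of this section (using $\beta = \alpha(1 - (1-\sqrt{2})\gamma)$ and its inverse $\gamma = -(1+\sqrt{2})(1 - \beta + \alpha\beta)$), the chain $\O \sub \O_1 \sub \QQ = \qb$ yields the desired inclusion.

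For a reader who prefers a direct computation in the basis $\{1,\alpha,\beta,\alpha\beta\}$ of $\qb$ given by \Tref{43b}, the same conclusion can be reached by solving for each generator of $\O$ in turn. The elements $1$ and $i = 2\alpha - 1$ are immediate. Rearranging the defining formula \eqref{b_re} for $\beta$ gives $ij = \tfrac12\bigl(3 + (1+2\sqrt{2})i\bigr) - 3\beta$, and substituting $i = 2\alpha-1$ yields $ij$ as an $O_K$-combination of $1,\alpha,\beta$ (the relevant constant $3 - (1+2\sqrt{2}) = 2 - 2\sqrt{2}$ is even in $O_K$, so the division by $2$ clears). Finally, rearranging \eqref{45b} expresses $3j$ as a combination of $\alpha\beta$, $i$, and $ij$; after substituting the preceding expressions, each resulting coefficient turns out to be divisible by $3$ in $O_K = \Z[\sqrt{2}]$, so $j\in\qb$.

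The one place where something genuinely could go wrong in the direct approach is this final divisibility by $3$: it is not formal, but reflects the fact that $\disc(\qb) = \sqrt{2}$ whereas $\disc(\O_1) = 3\sqrt{2}$, so the extra factor of $3$ is absorbed exactly when $\beta$ (equivalently $\gamma$) is adjoined to $\O_1$. Verifying it is a routine two-line calculation in $\Z[\sqrt{2}]$ once the previous substitutions have been made, and it would be the only step to record in detail.
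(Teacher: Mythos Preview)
Your proposal is correct. Your primary argument --- composing the inclusions $\O \subseteq \O_1$ (from \Pref{p5.1}, since $i = 2\alpha - 1$) and $\O_1 \subseteq \QQ$ (from \Pref{444}) with the identification $\qb = \QQ$ made at the start of \Sref{sec:6} --- is clean and entirely valid.

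The paper's own proof takes the direct route instead: it simply writes down explicit formulas $i = -1 + 2\alpha$ and $j = -(1+\sqrt{2}) + \alpha + \beta - 2\alpha\beta$, then substitutes to express an arbitrary $X = m_0 + m_1 i + m_2 j + m_3 ij \in \O$ as an $O_K$-linear combination of $1,\alpha,\beta,\alpha\beta$. Your second paragraph heads toward the same destination but by a longer path, extracting $ij$ from \eqref{b_re} and then $j$ from \eqref{45b}, with a divisibility-by-$3$ check left to the reader. The paper avoids that check by simply asserting the formula for $j$ outright (which one can verify by multiplying out). What your chain-of-inclusions argument buys is that it reuses structural facts already proved in \Sref{sec:5} and requires no new computation at all; what the paper's direct computation buys is a self-contained one-line formula for $j$ in the $\{1,\alpha,\beta,\alpha\beta\}$ basis, which is useful elsewhere.
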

\begin{proof}
We have that~$i = {{-1}} + 2\alpha$ and~$j = -(1+\sqrt{2})+\alpha+\beta-2\alpha\beta$.

%\forget
To prove the inclusion \eqref{O inclusion}, let
$X=m_0+m_1i+m_2j+m_3ij\in\mcal O$ (where~$m_\ell\in O_K$).  Then
\[
\begin{aligned}
X & =\left((m_0-m_1-m_2-m_3)+(m_3-m_2)\sqrt{2}\right)
\\&\quad+\left((2m_1+m_2-m_3)-2m_3\sqrt{2}\right)\alpha
\\&\quad+\left(m_2+3m_3\right)\beta
\\&\quad-\left(2m_2\right)\alpha\beta,
\end{aligned}
\]
and therefore~$X\in\qb$.
%\forgotten
\end{proof}
\forgotten

\forget
\begin{remark}
The quotient~$\qb/{\mcal O}$ is an abelian group of order~$144$ and exponent~$6$, isomorphic as an~$O_K$-module
to~$(O_K/2O_K) \oplus (O_K/2O_K) \oplus (O_K/3O_K)$.
\end{remark}

\begin{proof}
We view~$\mcal O$ as a submodule of~$\qb$. Presenting the elements~$1,i,j$ and
$ij  = ({{-1}}+\sqrt{2}) - (1+2\sqrt{2})\alpha  + 3\beta$, which span~$\mcal O$, as linear combinations of the basis~$1,\alpha,\beta,\alpha\beta$, we obtain the relation matrix of the quotient:
$$\left(\begin{array}{cccc}
1 & {{-1}} & {{-1}}+\sqrt{2} & {{-1}}+\sqrt{2} \\
0 & 2  & 1         & -(1+2\sqrt{2}) \\
0 & 0  & 1         & 3 \\
0 & 0  & -2        & 0
\end{array}\right).$$
Performing elementary operations from left and right, over~$O_K$, we
arrive at the matrix~$\operatorname{diag}(1,1,2,6)$, which proves that
\[
\qb/{\mcal O} \,\cong\, (O_K/2O_K) \oplus (O_K/6O_K),
\]
from which the claim follows by the Chinese remainder theorem.
\end{proof}
\forgotten

\forget
\begin{proposition}
\label{56}
The order~$\qb$ is a maximal order of the quaternion algebra~$D_B$.
\end{proposition}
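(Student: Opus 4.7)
The plan is to reduce directly to Proposition~\ref{444}. At the beginning of Section~\ref{sec:6} the identities
\[
\beta = \alpha\bigl(1 - (1-\sqrt{2})\gamma\bigr), \qquad \gamma = -(1+\sqrt{2})\bigl(1 - \beta + \alpha\beta\bigr)
\]
exhibit $\beta \in \QQ$ and $\gamma \in O_K[\alpha,\beta]$, giving the equality of sets $\qb = O_K[\alpha,\beta] = \QQ$. Since Proposition~\ref{444} already asserts that $\QQ$ is a maximal order of $D_B$, the maximality of $\qb$ is then immediate.

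For readers who want a self-contained discriminant argument, the plan is to invoke the criterion recalled at the start of Section~\ref{sec:5}: an order $M$ is maximal if and only if $\disc(M) = \disc(D_B)$. By Proposition~\ref{42a}, the only ramified non-archimedean prime of $D_B$ is $(\sqrt{2})$, so $\disc(D_B) = \sqrt{2}$. By Theorem~\ref{43b}, $\qb$ is free over $O_K$ on the basis $\{1,\alpha,\beta,\alpha\beta\}$, so $\disc(\qb)$ is the square root of the determinant of the $4 \times 4$ reduced-trace matrix $\bigl(\Tr_{D_B}(x_i x_j)\bigr)$. The relations $\alpha^2 = -1+\alpha$, $\beta^2 = -1+\beta$, and $\beta\alpha = -(1+\sqrt{2}) + \alpha + \beta - \alpha\beta$, together with the consequences $\alpha(\alpha\beta) = -\beta + \alpha\beta$ and $\beta(\alpha\beta) = -1 + \alpha - \sqrt{2}\beta$ derived in the proof of Theorem~\ref{43b}, determine all entries of the trace matrix (for the mixed entries one uses the cyclicity of $\Tr$ to reduce $\Tr(\alpha\beta\alpha)$ and $\Tr(\alpha\beta^{2})$ to traces of known basis expansions).

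Expanding the determinant of the resulting matrix should yield $2$, giving $\disc(\qb) = \sqrt{2} = \disc(D_B)$ and hence maximality. The main, and essentially only, obstacle is purely bookkeeping: because $\alpha$ and $\beta$ do not commute, one must carefully distinguish $\alpha\beta$ from $\beta\alpha$ and use the noncommuting multiplication table, rather than treat the basis as though it were commutative. The first approach avoids this entirely by leveraging the already-proved Proposition~\ref{444}; the second is independent but essentially repeats the discriminant computation of Proposition~\ref{444} in the $(\alpha,\beta)$-basis instead of the $(\alpha,\gamma)$-basis.
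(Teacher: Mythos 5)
Your proposal is correct and coincides with the paper's own treatment: the reduction $\qb = O_K[\alpha,\beta] = \QQ$ via the two displayed identities, followed by Proposition~\ref{444}, is exactly how the paper establishes maximality of $\qb$ in Section~\ref{sec:6}, and your alternative discriminant computation in the basis $\{1,\alpha,\beta,\alpha\beta\}$ reproduces the paper's direct argument (trace matrix, determinant, comparison with $\disc(D_B)=(\sqrt{2})$ from Proposition~\ref{42a} via Vign\'eras). The only nitpick is that the determinant of the reduced-trace matrix comes out to $-2$ rather than $2$; this is harmless, since the discriminant is the ideal with $\disc(\qb)^2 = (-2) = (2) = (\sqrt{2})^2$, so the conclusion $\disc(\qb) = (\sqrt{2}) = \disc(D_B)$ and hence maximality stands.
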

\begin{proof}
By \Tref{43b}, a basis of~$\qb$ over~$O_K$ is given by the set~$\{ b_1, b_2, b_3, b_4 \}$, where~$(b_1, b_2, b_3, b_4) = (1, \alpha, \beta, \alpha \beta)$.  It follows from the relations (1)-(3) of \Tref{43b}, as well as the relations deduced from them in that proof, that the matrix~$( \Tr_{D_B} (b_i b_j)) \in \M_4(K)$ is:
$$
\left(
\begin{array}{cccc}
2 & 1 & 1 & - \sqrt{2} \\
1 & {{-1}} & - \sqrt{2} & {{-1}} - \sqrt{2} \\
1 & - \sqrt{2} & {{-1}} & {{-1}} - \sqrt{2} \\
- \sqrt{2} & {{-1}} - \sqrt{2} & {{-1}} - \sqrt{2} & 0
\end{array}
\right).
$$

This matrix has determinant~$-2$, and therefore the reduced discriminant of~$\qb$ is the prime ideal~$(\sqrt{2})$.  It now follows from Proposition~\ref{42a} and \cite{Vig80}, Corollaire III.5.3 that~$\qb$ is a maximal order.
\end{proof}
\forgotten

\section{The triangle group in the Bolza order}
\label{sec:7}

Let~$\qb^1$ denote the group of elements of norm~$1$ in the
order~$\qb$.  Through the embedding $D_B \hookrightarrow \M_2(\R)$, we
may view~$\qb^1$ as an arithmetic lattice of~$\SL(\R)$. Furthermore,
by \Pref{archram} the algebra~$D_B$ ramifies at all the archimedean
places except for the natural one, so it satisfies Eichler's
condition; see \cite[p.~82]{Vig80}.  Therefore~$\qb^1$ is a co-compact
lattice.

Since~$\Norm(\alpha) = \Norm(\beta) = 1$, the subgroup generated
by~$\alpha,\beta$ in~$\mul{D_B}$ is contained in~$\qb^1$.

\begin{prop}
The elements~$\alpha,\beta$ defined in \eqref{a_re} and \eqref{b_re}
satisfy the relations
$$\alpha^3 = \beta^3 = (\alpha\beta)^4 = {{-1}}.$$
\end{prop}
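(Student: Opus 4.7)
The plan is to work entirely inside the $O_K$-basis $\{1,\alpha,\beta,\alpha\beta\}$ of $\qb$ using only the three multiplication relations already recorded in \Tref{43b}, namely
\[
\alpha^2=-1+\alpha,\qquad \beta^2=-1+\beta,\qquad \beta\alpha=(-1-\sqrt{2})+\alpha+\beta-\alpha\beta.
\]
In particular, I would not need to return to the $\{1,i,j,ij\}$ presentation: the three relations above already encode everything.

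The first two identities $\alpha^3=-1$ and $\beta^3=-1$ are immediate: multiplying $\alpha^2=-1+\alpha$ on the left by $\alpha$ gives $\alpha^3=-\alpha+\alpha^2=-\alpha+(-1+\alpha)=-1$, and symmetrically for $\beta$. So the only content is $(\alpha\beta)^4=-1$.

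For this, the plan is to first compute $(\alpha\beta)^2=\alpha(\beta\alpha)\beta$ by substituting the commutation relation (3) for $\beta\alpha$ and then reducing every product using relations (1) and (2). Expanding yields
\[
(\alpha\beta)^2=(-1-\sqrt{2})\alpha\beta+\alpha^2\beta+\alpha\beta^2-\alpha^2\beta^2,
\]
and after replacing $\alpha^2=-1+\alpha$ and $\beta^2=-1+\beta$ and collecting terms, all contributions in $\alpha$, $\beta$ alone cancel and one is left with the clean formula
\[
(\alpha\beta)^2=-1-\sqrt{2}\,\alpha\beta.
\]
I expect this reduction step to be the only place where a genuine computation is required; it is routine but must be carried out carefully.

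Given this identity, squaring once more is algebraically trivial because $-1$ and $\sqrt{2}\alpha\beta$ commute:
\[
(\alpha\beta)^4=\bigl(-1-\sqrt{2}\,\alpha\beta\bigr)^2=1+2\sqrt{2}\,\alpha\beta+2(\alpha\beta)^2=1+2\sqrt{2}\,\alpha\beta-2-2\sqrt{2}\,\alpha\beta=-1,
\]
completing the proof. Thus the only real obstacle is the intermediate computation of $(\alpha\beta)^2$, after which the rest is formal manipulation of a single scalar-plus-$\alpha\beta$ expression.
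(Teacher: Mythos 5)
Your proof is correct, but it takes a genuinely different route from the paper's. The paper argues via the reduced trace and norm: since every non-scalar element of $D_B$ satisfies its quadratic polynomial $\lam^2-\Tr(x)\lam+\Nr(x)$, and one computes $\Nr(\alpha)=\Nr(\beta)=\Nr(\alpha\beta)=1$, $\Tr(\alpha)=\Tr(\beta)=1$, $\Tr(\alpha\beta)=-\sqrt{2}$ (the last from \eqref{45b}), the elements $\alpha,\beta$ are roots of $\lam^2-\lam+1$, which divides $\lam^3+1$, and $\alpha\beta$ is a root of $\lam^2+\sqrt{2}\lam+1$, which divides $\lam^4+1$. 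You instead stay entirely inside the multiplication table of \Tref{43b} and grind out $(\alpha\beta)^2=\alpha(\beta\alpha)\beta=-1-\sqrt{2}\,\alpha\beta$, then square; your intermediate identity is exactly the one the paper itself derives from the same relations in Section~\ref{nine} when analyzing the elliptic element $\varpi=1+\sqrt{2}\,\alpha\beta$, so the computation is sound (and is of course equivalent to saying $\alpha\beta$ satisfies $\lam^2+\sqrt{2}\lam+1=0$). The paper's argument is shorter and more conceptual --- it reads the orders of elements off trace and norm alone, in keeping with the systolic theme of extracting geometry from traces, and needs no collection of terms --- while yours is self-contained given \Tref{43b}, avoids returning to the $\{1,i,j,ij\}$ coordinates, and produces the explicit relation $(\alpha\beta)^2=-1-\sqrt{2}\,\alpha\beta$ as a useful by-product that the paper needs later anyway.
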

\begin{proof}
First we note that~$\Norm(\alpha) = \Norm(\beta) = 1$.
The minimal polynomial of every non-scalar element of~$D_B$ is quadratic, determined by the trace and norm of the element. Since~$\tr(\alpha) = \tr(\beta) = 1$, both~$\alpha$ and~$\beta$ are roots of the polynomial~$\lam^2 - \lam + 1$, which divides~$\lam^3 +1$. Similarly~$\tr(\alpha\beta) = - \sqrt{2}$, so~$\alpha \beta$ is a root of~$\lam^2 + \sqrt{2}\lam +1$, which divides~$\lam^4 +1$.
% Some of the relations were computed before, in \Tref{43b}.
\end{proof}

A comparison of the areas of the fundamental domains shows that in fact~$\qb^1 = \sg{\alpha,\beta}$ and that~$\qb^1/\set{\pm 1}$ is isomorphic to the triangle group~$\Delta_{(3,3,4)}$.

\section{A lower bound for the systole}\label{sec:8}

We give lower bounds on the systole of congruence covers of any
arithmetic surface and then specialize to the Bolza surface.  Let $K$
be any number field, $O_K$ its ring of integers, $D$ any central division algebra over $K$, and
$Q$ an order in $D$. Let $X_1 = \mcal{H}^2/Q^1$, where $Q^1$ is the
group of elements of norm $1$ in $Q$. We let $d = \dimcol{K}{\Q}$.

We quote the definition of the constant $\Lambda_{D,Q}$ from
\cite[Equation~(4.9)]{KSV06a}.  Let~$T_1$ denote the set of finite
places $\frak p$ of $K$ for which~$D_{\frak p}$ is a division algebra,
and let~$T_2$ denote the set of finite places for which~$Q_{\frak p}$
is non-maximal. It is well known that~$T_1$ and~$T_2$ are finite.  We
denote
\begin{equation}\label{lamdef}
\Lambda_{D,Q} = \prod_{\frak{p} \in T_1\setminus T_2}
{\left(1+\frac{1}{\Norm(\frak{p})}\right)} \cdot \prod_{\frak{p} \in T_2} 2 \cdot
\prod_{\frak{p} \in T_2, \frak{p}\divides 2} \Norm(\frak{p})^{e(\frak{p})},
\end{equation}
where for a diadic prime,~$e(\frak{p})$ denotes the ramification index
of~$2$ in the completion $O_{\frak{p}}$, namely $\frak{p}^{e(\frak{p})}O_{\frak{p}} = 2O_{\frak{p}}$, and $\Norm(I)$ denotes the norm of the ideal $I$. This constant is chosen in \cite{KSV06a} to ensure that $\dimcol{Q^1}{Q^1(I)} \leq \Lambda_{D,Q} \Norm(I)^3$, for any ideal $I$.

Recall that if~$I \normali O_K$ is any ideal, then~$Q^1(I)$ is the kernel of the natural map~$Q^1 \ra
(Q/IQ)^1$ induced by the ring epimorphism~$Q \ra Q/IQ$. This
congruence subgroup gives rise to the surface~$X_I = \mcal
H^2/Q^1(I)$, which covers~$X_1$. A bound for the reduced trace was given in \cite[Equation~(2.5)]{KSV06a} as follows. Let $x\neq\pm1$ in~$Q^1(I)$. Then we have
\begin{equation}\label{81c}
\abs{\Tr_D(x)}>\frac{1}{2^{2(d{{-1}})}} \Norm(I)^2-2. % This can actually be improved to \frac{1}{2^{d-2} \Norm(\ideal{2}+\kappa I)} \Norm(I)^2 - 2, cf. (2.5) in [KSV06a].
\end{equation}

By \cite[Corollary 4.6]{KSV06a}, we have
\[
[Q^1:Q^1(I)]\leq\Lambda_{D,Q}N(I)^3.
\]
Therefore
\[\begin{aligned} 4\pi\left(g(X_I){{-1}}\right)&\leq\area(X_I)
 \\
 &=\dimcol{Q^1}{Q^1(I)}\cdot\area(X_1) \\
& \leq\Lambda_{D,Q}N(I)^3\cdot\area(X_1),\end{aligned}\]
i.e.
\[N(I)\geq\left(\frac{4\pi}{\Lambda_{D,Q}\cdot\area(X_1)}(g{{-1}})\right)^{\frac13}.\]

\begin{proposition}
\label{81b}
Suppose $2^{3(d{{-1}})}\Lambda_{D,Q} < \frac{4\pi}{\area(X_1)}$.
Then all but finitely many principal congruence covers of $X_1$
satisfy the relation $$\sys > \frac{4}{3}\log g.$$
\end{proposition}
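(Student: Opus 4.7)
The plan is to chain three already--available bounds: (i) the systole formula \eqref{cosh} combined with the reduced--trace estimate \eqref{81c}, to get $\sys(X_I)$ from below in terms of $\Norm(I)$; (ii) the cube--root inequality $\Norm(I)\ge\bigl(4\pi(g-1)/(\Lambda_{D,Q}\area(X_1))\bigr)^{1/3}$ derived just above the proposition, to replace $\Norm(I)$ by the genus $g=g(X_I)$; and (iii) the numerical hypothesis, arranged so that the constant that survives after these substitutions is strictly positive. The $\tfrac{4}{3}$ in the conclusion is precisely the product of the exponent $2$ in the trace bound and the reciprocal $\tfrac{1}{3}$ from the cubic index bound, doubled because $\sys=2\,\text{arccosh}(\cdot)$.

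Concretely, applying \eqref{cosh} to a minimizing $x\neq\pm 1$ in $Q^1(I)$ and substituting the trace bound yields
\[
\sys(X_I)\;>\;2\,\text{arccosh}\!\left(\tfrac{1}{2^{2d-1}}\Norm(I)^2-1\right).
\]
Using $\text{arccosh}(y)=\log(2y)+o(1)$ as $y\to\infty$ linearizes this to $\sys(X_I)\ge 4\log\Norm(I)-4(d-1)\log 2-\varepsilon_I$, with $\varepsilon_I\to 0$ as $\Norm(I)\to\infty$. Next I would take logarithms in the cube--root bound on $\Norm(I)$, multiply by four, and substitute to obtain
\[
\sys(X_I)\;\ge\;\tfrac{4}{3}\log(g-1)+\tfrac{4}{3}\log\!\left[\tfrac{4\pi}{2^{3(d-1)}\Lambda_{D,Q}\area(X_1)}\right]-\varepsilon_I.
\]
The hypothesis says exactly that the bracketed ratio exceeds $1$, so its logarithm is a positive constant $c=c(D,Q)>0$ independent of $I$.

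To finish, I would observe that only finitely many ideals of $O_K$ have norm below any fixed bound, so $\Norm(I)\to\infty$ along the enumeration of principal congruence covers, and hence $g(X_I)\to\infty$ as well. Once $g$ is large enough that both $\varepsilon_I$ and the tiny gap $\tfrac{4}{3}\bigl(\log g-\log(g-1)\bigr)$ are bounded by $c/2$, the displayed inequality upgrades to $\sys(X_I)>\tfrac{4}{3}\log g$. Only finitely many ideals fail this threshold.

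I expect the sole genuine subtlety to be the asymptotic analysis of $\text{arccosh}$: the estimate must be uniform enough to pin down the exceptional set as honestly finite, not merely of density zero. Everything else is bookkeeping, and the deeper inputs --- namely the trace bound \eqref{81c} and the cubic index bound encoded in $\Lambda_{D,Q}$ --- are quoted from \cite{KSV06a} and need not be reworked here.
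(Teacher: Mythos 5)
Your proposal is correct and takes essentially the same route as the paper: chain the trace bound \eqref{81c} with the cubic index bound $\Norm(I)\gtrsim g^{1/3}$, and use the hypothesis to make the surviving constant factor exceed $1$, concluding for all sufficiently large $g$. The differences are cosmetic — you linearize via the asymptotics of $\mathrm{arccosh}$ with an additive error $\varepsilon_I$, while the paper uses $\ell_x>2\log(|\Tr(x)|-1)$ and expands the argument of the logarithm in powers of $g$ — and your brief appeal to $g(X_I)\to\infty$ as $\Norm(I)\to\infty$ is the same step the paper leaves tacit.
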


\begin{proof}
A hyperbolic element~$x$ in a Fuchsian
group~$\Gamma\subseteq\PSL(\bb R)$ is conjugate to a matrix
\[
\left( \begin{array}{cc} \lambda & 0 \\ 0 & \lambda^{{{-1}}} \\
\end{array} \right).
\]

Here~$\lambda=e^{\ell_x/2}>1$, where~$\ell_x>0$ is the length of the
closed geodesic corresponding to~$x$ on the Riemann surface~$\mcal
H^2/\Gamma$.  Since
\[
\left|\Tr_{M_2(\bb R)}(x)\right|=
\left|\lambda+\lambda^{{{-1}}}\right|\leq\left|\lambda\right|+
\left|\lambda^{{{-1}}}\right|\leq\left|\lambda\right|+1,\] we get
\[\ell_x=2\log|\lambda|>2\log\left(\left|\Tr_{M_2(\bb
R)}(x)\right|{{-1}}\right).\]

By \eq{81c},
%\[\begin{aligned}
\begin{eqnarray}
\sys(X_I) & > & 2\log\left(|\Tr_D(x)|{{-1}}\right) \nonumber \\
& > & 2\log\left( \frac{1}{2^{2(d{{-1}})}} \Norm(I)^2-3\right) \label{eq82} \\
& \geq & 2\log\left(\frac{1}{2^{2(d{{-1}})}}\left[\frac{4\pi}{\Lambda_{D,Q} \cdot \area(X_1)}\left(g(X_I){{-1}}\right)\right]^{\frac23}-3\right). \nonumber
\end{eqnarray}
%\end{aligned}\]
Expanding the argument under the logarithm as a series in $g$, we find that the coefficient of the highest term $g^{2/3}$
is $\left[\frac{1}{2^{3(d{{-1}})}}\frac{4\pi}{\Lambda_{D,Q} \cdot \area(X_1)}\right]^{\frac23}$. When this coefficient is strictly greater than $1$, for sufficiently large $g$ we have that
\[
\begin{aligned}
\sys(X_I) &
> \frac{4}{3} \log \left(g(X_I)\right).
\end{aligned} \qedhere
\]
\end{proof}

A closer inspection of \eqref{eq82} enables us to provide an explicit bound on the genera $g$ for which the inequality of \Pref{81b} holds.
\begin{remark}\label{81explicit}
We have that
$$2\log\left(\frac{1}{2^{2(d{{-1}})}}\left[\frac{4\pi}{\Lambda_{D,Q} \cdot \area(X_1)}\left(g{{-1}}\right)\right]^{\frac23}-3\right) > \frac{4}{3}\log(g)$$
if and only if
$$\frac{\left(1+\frac{3}{g^{2/3}}\right)^{3/2}}{1-\frac{1}{g}} \leq \frac{4\pi}{2^{3(d{{-1}})}\Lambda_{D,Q} \cdot \area(X_1)}.$$
Since
$$\frac{\left(1+\frac{3}{g^{2/3}}\right)^{3/2}}{1-\frac{1}{g}} \leq 1+ \frac{6}{g^{2/3}}$$
for every $g \geq 13$, we conclude that if $2^{3(d{{-1}})}\Lambda_{D,Q} < \frac{4\pi}{\area(X_1)}$, then
$\sys > \frac{4}{3}\log g$ provided that
$$g \geq \max\set{13, \left(\frac{6}{\frac{4\pi}{2^{3(d{{-1}})}\Lambda_{D,Q} \area(X_1)}{{-1}}}\right)^{3/2}}.$$
\end{remark}

\forget
\begin{remark}
Using the tighter bound of Equation~(2.5) in \cite{KSV06a} rather than (2.6), we can make a stronger statement. Suppose the quaternion algebra $D$ is presented in the form of \eq{quaternions}, and that $Q \sub \frac{1}{\kappa}O_K[i,j]$ for $\kappa \in O_K$ (such $\kappa$ always exists).

The assumption in \Pref{81b} can be weakened to
$(2^{d-2}\Norm(\ideal{2}+\kappa I))^{3/2}\Lambda_{D,Q}  < \frac{4\pi}{\area(X_1)}$. This is an improvement only for odd ideals $I$, and where $\kappa$ is odd.
\end{remark}
\forgotten

\begin{corollary}
Principal congruence covers of the Bolza order satisfy the bound
$\sys > \frac{4}{3}\log g$ provided that $g \geq 15$.
\end{corollary}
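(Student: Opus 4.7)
The plan is to specialize \Pref{81b} and the explicit bound of \Rref{81explicit} to the Bolza order $\qb$, and verify that the resulting genus threshold is at most $15$. This reduces the corollary to an arithmetic evaluation of three quantities: the degree $d = \dimcol{K}{\Q}$, the constant $\Lambda_{D_B,\qb}$, and the area of $X_1 = \HH^2/\qb^1$.

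First I would evaluate $\Lambda_{D_B,\qb}$ using \eq{lamdef}. Since $\qb$ is a maximal order by \Pref{444}, the set $T_2$ of primes at which the order is non-maximal is empty. By \Pref{42a}, the unique non-archimedean prime at which $D_B$ ramifies is $(\sqrt{2})$, whose residue field has order~$2$. Hence all but the first product in \eq{lamdef} are trivial, giving
\[
\Lambda_{D_B,\qb} = 1 + \frac{1}{\Norm((\sqrt{2}))} = \frac{3}{2}.
\]
For the area I would invoke \Tref{intro3}, which identifies $\qb^1/\set{\pm 1}$ with the orientation-preserving triangle group $\Delta_{(3,3,4)}$. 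A fundamental domain for this group is the union of two hyperbolic $(3,3,4)$-triangles, so Gauss--Bonnet yields $\area(X_1) = 2\pi(1 - \tfrac{1}{3} - \tfrac{1}{3} - \tfrac{1}{4}) = \pi/6$. With $d = 2$ one checks $2^{3(d-1)}\Lambda_{D_B,\qb} = 12 < 24 = 4\pi/\area(X_1)$, so the hypothesis of \Pref{81b} is met.

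Finally, I would substitute these values into the explicit threshold of \Rref{81explicit}. The ratio $4\pi/(2^{3(d-1)}\Lambda_{D_B,\qb}\,\area(X_1))$ simplifies to $2$, so the bound reads
\[
g \;\geq\; \max\set{13,\; \bigl(6/(2-1)\bigr)^{3/2}} \;=\; \max\set{13,\; 6^{3/2}}.
\]
Since $6^{3/2} = 6\sqrt{6}$ lies strictly between $14$ and $15$, every integer $g \geq 15$ satisfies the threshold, yielding the claimed bound.

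The main obstacle is not the arithmetic but a conceptual subtlety in the area computation: one must distinguish the full $(3,3,4)$-reflection group (with triangle fundamental domain of area $\pi/12$) from its orientation-preserving index-$2$ subgroup $\Delta_{(3,3,4)} \cong \qb^1/\set{\pm 1}$, whose fundamental domain has area $\pi/6$. Using the wrong factor of $2$ would still satisfy the hypothesis of \Pref{81b} but would produce a different numerical threshold, so this point must be handled carefully.
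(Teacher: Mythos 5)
Your proposal is correct and follows essentially the same route as the paper: compute $\Lambda_{D_B,\qb}=\tfrac32$ from maximality and the single ramified prime $(\sqrt{2})$, take $\area(X_1)=\pi/6$ for the orientation-preserving $(3,3,4)$ group, verify $12<24$, and substitute into \Rref{81explicit} to get the threshold $6^{3/2}\approx 14.697$, hence $g\geq 15$. The only difference is expository (your explicit simplification of the ratio to $2$ and the caution about the reflection group versus its index-$2$ rotation subgroup), which the paper leaves implicit.
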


\begin{proof}
Since the order~$\qb$ is maximal, it follows (e.g. by \cite[Corollary 6.2.8]{MR03}) that all localisations are maximal as well.
Therefore the set~$T_2$ is empty (see material around
\cite[formula~4.10]{KSV06a}), while~$T_1$ consists of a single
nonarchimedean place~$\sqrt{2}$ with norm~$2$ (see Remark~\ref{42}).
Therefore~$\Lambda_{D_B,\qb}=\frac{3}{2}$.

Moreover, since $\qb^1/\{\pm 1\}$ is the triangle group $(3,3,4)$, we
have
\[
\area(X_1) = 2
\left(\pi-\left(\frac\pi3+\frac\pi3+\frac\pi4\right)\right)=
\frac{\pi}{6},
\]
so $\frac{4\pi}{\area(X_1)} = 24$.  Finally the dimension of the
invariant trace field over~$\Q$ is $d = 2$, so the condition
$2^{3(d{{-1}})}\Lambda_{D_B,\qb} < \frac{4\pi}{\area(X_1)}$ of \Pref{81b}
holds since $12 < 24$.

In order to obtain the explicit lower bound on $g$, we substitute in
\Rref{81explicit}, using the numerical value $6^{3/2} \approx 14.697$.
\end{proof}

\section{The Fuchsian group of the Bolza surface}
\label{ten}

In this section we give an explicit presentation of the Fuchsian group
of the Bolza surface in terms of the quaternion
algebra~$\mathcal{Q}_B$.  We start with a geometric lemma that will
motivate the introduction of the special element exploited in
Lemma~\ref{112}.

\begin{lemma}
\label{111}
Let~$\bar A$ and~$\bar B$ be antipodal points on a systolic loop of a
hyperbolic surface~$M$.  Let~$A$ and~$B$ be their lifts to the
universal cover such that~$d(A,B)=\frac{1}{2}\sys(M)$.  Let~$\tau_A$
and~$\tau_B$ be the involutions of the universal cover with centers
at~$A$ and~$B$.  Then the composition~$\tau_B\circ\tau_A$ belongs to a
%
%''a'', not ``the''
%
conjugacy class in the fundamental group defined by the systolic loop.
\end{lemma}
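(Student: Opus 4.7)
The plan is to show that $\tau_B\circ\tau_A$ coincides with the deck transformation of $\mathcal H^2$ associated to the systolic loop, using the classical decomposition of a point reflection in $\mathcal H^2$ as a product of two orthogonal line reflections.

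First I would set up the lifts carefully. Let $\gamma$ be the systolic loop based at $\bar A$; antipodality forces $\bar B$ to be its midpoint in arc length. Lifting $\gamma$ starting at $A$ produces a geodesic segment in $\mathcal H^2$ of length $\sys(M)$ ending at $A' = g\cdot A$, where $g \in \pi_1(M,\bar A)$ is the deck transformation representing the free homotopy class of $\gamma$. The hypothesis $d(A,B)=\tfrac12\sys(M)$, together with uniqueness of geodesic lifts, forces $B$ to be the midpoint of this lifted segment, so $A$, $B$, $A'$ all lie on a single geodesic $\ell\subset\mathcal H^2$ which is the axis of $g$, and $g$ acts on $\ell$ as a hyperbolic translation by $\sys(M)$.

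Next I would decompose each involution. For any point $P\in\ell$, let $m_P$ denote the geodesic through $P$ perpendicular to $\ell$, and let $r_L$ denote reflection in a geodesic $L$. Since a composition of two reflections in geodesics meeting at a common point equals the rotation about that point by twice the angle between them, the point reflection $\tau_P$ (rotation by $\pi$) factors as
\[
\tau_P \;=\; r_{m_P}\circ r_\ell \;=\; r_\ell\circ r_{m_P}.
\]
Applying this for $P=A$ and $P=B$ and cancelling $r_\ell^2=\mathrm{id}$, one obtains
\[
\tau_B\circ\tau_A \;=\; (r_{m_B}\circ r_\ell)\circ(r_\ell\circ r_{m_A}) \;=\; r_{m_B}\circ r_{m_A}.
\]
The geodesics $m_A$ and $m_B$ admit $\ell$ as a common perpendicular, hence are ultraparallel, and the composition $r_{m_B}\circ r_{m_A}$ of reflections in two such geodesics is a hyperbolic translation along $\ell$ by distance $2\,d(m_A,m_B)=2\,d(A,B)=\sys(M)$.

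Finally I would match the direction to $g$. Noting $\tau_A(A)=A$ and that $\tau_B(A)$ is obtained by reflecting $A$ through $B$ along $\ell$, which is precisely $A'$, we get $(\tau_B\circ\tau_A)(A)=A'=g(A)$. Two hyperbolic translations with the same axis and the same effect on a single point must coincide, so $\tau_B\circ\tau_A=g$, an element of the deck group $\pi_1(M)$ representing the conjugacy class of the systolic loop $\gamma$. The only subtle point, and the main obstacle to a clean write-up, is the bookkeeping at the start: one must argue that the particular lift $B$ is the midpoint of the specific lifted geodesic segment starting at $A$ (as opposed to a different lift of $\bar B$ lying farther away), which is ensured by the distance hypothesis since any other lift of $\bar B$ sits at distance at least $\sys(M)-\tfrac12\sys(M)=\tfrac12\sys(M)$ from $A$ with equality only along $\ell$.
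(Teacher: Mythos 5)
Your argument is essentially the paper's own: the paper simply quotes the classical fact that a composition of two point involutions is a hyperbolic translation along the line through their centers by twice the distance between them, and concludes that this translation, having displacement $\sys(M)$ along the lift of the systolic loop, represents the class of that loop. You supply the proof of that classical fact via the decomposition into reflections $r_{m_P}\circ r_\ell$ and additionally verify the identification with the deck transformation $g$ by checking the action on $A$, which is a fuller write-up of the same route rather than a different one.
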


\begin{proof}
A composition of two involutions gives a translation by twice the
distance between the fixed points of the involutions.  Thus, consider
the hyperbolic line $\rho$ in the universal cover passing through $A$
and $B$.  Then the composition $\tau_B\circ\tau_A$ is a hyperbolic
translation along $\rho$ with displacement distance precisely
$\sys(M)$.  The image of the projection of $\rho$ back to $M$ is the
systolic loop.
\end{proof}

We now apply Lemma~\ref{111} in a situation where the points~$A$
and~$B$ are lifts of Weierstrass points on the Bolza surface (see
Section~\ref{eight} for details).  The composition of the
involutions~$(\alpha \beta)^2$ and~$(\beta\alpha)^{2}$ yields the
desired element.  This element was obtained through a detailed
geometric analysis of the action in the upperhalf plane which we will
not reproduce (\magma{} was not used here).

\begin{lemma}
\label{112}
The element~$(\alpha \beta)^2(\beta\alpha)^{-2}$ is in the congruence
subgroup $\qb^1(\sqrt{2})$.
\end{lemma}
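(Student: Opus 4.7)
My plan is to show directly that both $(\alpha\beta)^2$ and $(\beta\alpha)^2$ reduce to $-1$ modulo $\sqrt{2}\qb$, from which the claim follows immediately by forming their product with $(\beta\alpha)^{-2}$. The reduction is legitimate because $\beta\alpha$ has reduced norm $1$ and is therefore a unit of $\qb$, and because $\sqrt{2}$ lies in the centre of $\qb$, so the ideal $\sqrt{2}\qb$ is automatically two-sided.

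The key point I will exploit is that $\alpha\beta$ and $\beta\alpha$ share the same reduced characteristic polynomial over $K$. First, the reduced trace is a trace form, so $\Tr_{D_B}(\alpha\beta) = \Tr_{D_B}(\beta\alpha)$, and the common value $-\sqrt{2}$ can be read off from the explicit expansion~\eqref{45b}. Second, the reduced norm is multiplicative, so $\Nr_{D_B}(\alpha\beta) = \Nr_{D_B}(\alpha)\Nr_{D_B}(\beta) = 1 = \Nr_{D_B}(\beta\alpha)$, using that $\alpha,\beta$ have norm $1$ as already recorded in \Sref{sec:7}. Consequently both elements are roots of $\lambda^2 + \sqrt{2}\lambda + 1$, so each satisfies
\[
x^2 \,=\, -\sqrt{2}\,x - 1 \,\equiv\, -1 \pmod{\sqrt{2}\qb}.
\]

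Combining these reductions, $(\alpha\beta)^2 \equiv -1 \equiv (\beta\alpha)^2 \pmod{\sqrt{2}\qb}$; right-multiplying by $(\beta\alpha)^{-2} \in \qb$ then gives $(\alpha\beta)^2(\beta\alpha)^{-2} \equiv 1 \pmod{\sqrt{2}\qb}$, as required. I do not expect a real obstacle here: the content of the argument is just that $xy$ and $yx$ have the same reduced characteristic polynomial in any central simple algebra, combined with a one-line manipulation. The geometric picture furnished by \Lref{111} is what motivates looking at this particular commutator-like element, but plays no role in the verification itself.
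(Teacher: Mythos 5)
Your argument is correct, but it takes a genuinely different route from the paper. The paper proves \Lref{112} by brute-force expansion in the module basis of \Tref{43b}, exhibiting the explicit identity $(\alpha\beta)^2(\beta\alpha)^{-2}= 1+\sqrt{2}\bigl(1+(1+\sqrt{2})(\alpha-\beta)\bigr)$, from which membership in $1+\sqrt{2}\qb$ is read off directly. You instead argue conceptually: since $\Tr_{D_B}(\alpha\beta)=\Tr_{D_B}(\beta\alpha)=-\sqrt{2}$ and both elements have reduced norm $1$, each satisfies $\lambda^2+\sqrt{2}\lambda+1=0$, hence $(\alpha\beta)^2\equiv(\beta\alpha)^2\equiv -1 \pmod{\sqrt{2}\qb}$, and multiplying by $(\beta\alpha)^{-2}\in\qb$ (legitimate since $\sqrt{2}\qb$ is two-sided and a norm-one element of the involution-stable order $\qb$ has its inverse $u^*=\Tr(u)-u$ in $\qb$) gives the congruence to $1$; together with the evident fact that the element has norm $1$, this places it in $\qb^1(\sqrt{2})$. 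Both proofs are sound. Your approach is shorter and more robust — it works verbatim whenever the reduced trace lies in the ideal in question, without any coordinate computation — while the paper's computation buys the explicit expression for this element, which is reused later (in \Sref{sec:13} the element $\delta$ and its image $1+\epsilon\beta'$ in $\qb/2\qb$ are needed in the proof of \Tref{main1}), so with your route that reduction mod $2$ would still have to be computed separately at that point.
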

\begin{proof}
One has~$(\alpha\beta)^2(\beta\alpha)^{2}=
1+\sqrt{2}(1+(1+\sqrt{2})(\alpha-\beta))$.
\end{proof}

\begin{proposition}
\label{102}
The normal subgroup of the~$(3,3,4)$ triangle group generated by the
element~$(\alpha \beta)^2 (\beta\alpha)^{-2}$ has index~$24$.  The
normal subgroup is generated by the following four elements:
\begin{itemize}
\item
$c_1 = \alpha^{{{-1}}} \beta \alpha \beta^{{{-1}}} \alpha \beta$,
%
%%%UV changed the leading \alpha^2 to \alpha^{{{-1}}}
%
\item
$c_2 = \alpha \beta^{{{-1}}} \alpha \beta \alpha^{{{-1}}} \beta$,
\item
$c_3 = \alpha \beta^{{{-1}}} \alpha^{{{-1}}} \beta \alpha^{{{-1}}} \beta^{{{-1}}}$,
\item
$c_4 = \beta \alpha^{{{-1}}} \beta \alpha \beta^{{{-1}}} \alpha$,
\end{itemize}
which satisfy a single length-$8$ relation~$c_4^{{{-1}}} c_3^{{{-1}}}
c_2 c_4 c_1 c_2^{{{-1}}} c_1^{{{-1}}}c_3 = 1$.
%
%equivalently: $[e,f] = [d, c] [ c d c^{{{-1}}}, f]$ , where~$[x,y]$
%denotes the commutator~$xyx^{{{-1}}}y^{{{-1}}}$.
%
The reduced traces are
$$\tr(c_1) = \tr(c_2) = \tr(c_3) = \tr(c_4) = -2(1 + \sqrt{2}).$$
\end{proposition}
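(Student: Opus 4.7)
The statement has two essentially independent parts: a finite group-theoretic computation (index, generators, and defining relation of the normal closure) and a direct trace calculation. I would treat them in that order.

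For the group-theoretic part, let $N$ denote the normal closure of $r := (\alpha\beta)^2(\beta\alpha)^{-2}$ in $\Delta_{(3,3,4)}$. My plan is to apply coset enumeration (Todd--Coxeter) followed by the Reidemeister--Schreier algorithm, as implemented in \magma{}, to the presentation
\[
\Delta_{(3,3,4)} \,=\, \langle \alpha,\beta \mid \alpha^3=\beta^3=(\alpha\beta)^4=1\rangle
\]
enlarged by the extra relator $r$. The coset enumeration is expected to terminate with exactly $24$ cosets, establishing $\dimcol{\Delta_{(3,3,4)}}{N} = 24$. This is consistent with \Cref{114}, where $B$ has index $24$ in $\qb^1/\set{\pm 1}\cong\Delta_{(3,3,4)}$, and with \Lref{112}, which places $r$ inside $\qb^1(\sqrt{2})/\set{\pm 1}$; indeed $N$ is expected to coincide with $B$.

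Reidemeister--Schreier, applied to a Schreier transversal of size $24$, outputs an initial presentation of $N$ on up to $48$ generators and $72$ relations. Tietze simplification should reduce this to $4$ generators and a single relation, in accordance with the Euler characteristic: since $4\pi(g-1) = \dimcol{\Delta_{(3,3,4)}}{N}\cdot\area(X_1) = 24 \cdot \tfrac{\pi}{6} = 4\pi$, the quotient is a surface of genus $2$ (the Bolza surface), whose fundamental group has $4$ generators and one length-$8$ relation. I would then match the simplified generators with the particular words $c_1,c_2,c_3,c_4$ displayed in the statement by verifying directly that each $c_i$ lies in $N$, that the four of them generate $N$, and that they satisfy the relation $c_4^{-1}c_3^{-1}c_2c_4c_1c_2^{-1}c_1^{-1}c_3=1$; each of these is a finite word-problem check in the $24$-element quotient $\Delta_{(3,3,4)}/N$.

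For the trace computation, I would substitute the explicit quaternion formulas \eqref{a_re} and \eqref{b_re} for $\alpha,\beta\in\qb$ into each $c_i$. By \Dref{Tr+Nrd}, the reduced trace of $x = x_0+x_1 i + x_2 j + x_3 ij$ is $2x_0$, so only the scalar coordinate of each $c_i$ needs to be tracked. Using the short rewriting rules $\alpha^2 = -1+\alpha$, $\beta^2=-1+\beta$, and $\beta\alpha = (-1-\sqrt{2}) + \alpha+\beta-\alpha\beta$ recorded in the proof of \Tref{43b}, each length-$6$ word $c_i$ collapses to an $O_K$-linear combination of the basis $\set{1,\alpha,\beta,\alpha\beta}$, and the coefficient of $1$ should work out to $-(1+\sqrt{2})$ in each case, giving the asserted trace $-2(1+\sqrt{2})$.

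The main obstacle is the first part. Without machine assistance, producing the index $24$ by hand via coset enumeration and then massaging the Reidemeister--Schreier output down to exactly the four-generator, one-relation presentation displayed in the statement, while simultaneously matching the simplified generators with the specific explicit words $c_1,\ldots,c_4$, requires a substantial amount of bookkeeping. By contrast, the trace verification is essentially a linear-algebra exercise in a fixed rank-$4$ $O_K$-module and poses no conceptual difficulty.
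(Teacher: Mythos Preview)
Your proposal is correct and matches the paper's approach: the paper itself simply states that the proposition ``was checked directly using the \magma{} package,'' which is exactly the Todd--Coxeter plus Reidemeister--Schreier computation you outline, together with the straightforward trace verification in the module basis $\{1,\alpha,\beta,\alpha\beta\}$. Your added remarks on the Euler-characteristic consistency check and the reduction to coordinates via the relations of \Tref{43b} are useful elaborations but do not deviate from the paper's method.
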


This was checked directly using the \magma{} package.

\begin{corollary}\label{114}
The normal subgroup of~$\mathcal{Q}_B^1$ generated by the element
$(\alpha \beta)^2 (\beta\alpha)^{-2}$ generates the Fuchsian group of
the Bolza surface.
\end{corollary}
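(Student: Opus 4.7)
The plan is to match the normal closure $N \triangleleft \Delta_{(3,3,4)} \cong \qb^1/\set{\pm 1}$ of the image $\bar t$ of $t = (\alpha\beta)^2(\beta\alpha)^{-2}$ with the Bolza surface group $B \subset \Delta_{(3,3,4)}$ by a double containment argument: first producing an inclusion $N \subseteq B$ from the geometric content of \Lref{111}--\Lref{112}, then upgrading it to equality via an index count, using that both groups have index~$24$ in $\Delta_{(3,3,4)}$.

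First I would establish that $\bar t \in B$. Since $(\alpha\beta)^4 = -1$ in $\qb^1$, the elements $(\alpha\beta)^2$ and $(\beta\alpha)^2$ descend to involutions in $\Delta_{(3,3,4)}$. The geometric analysis alluded to between \Lref{111} and \Lref{112} (but not reproduced in the paper) places the fixed points of these two involutions at antipodal lifts of a pair of Weierstrass points lying on a systolic loop of the Bolza surface (cf.~Section~\ref{eight}, where the $\pi/4$-vertices of the $(3,3,4)$-triangles are identified with the Weierstrass points). \Lref{111} then identifies the composition $\bar t$ as the hyperbolic translation along that lifted systolic geodesic, so in particular $\bar t \in B$.

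Next I would use that $B$ is normal in $\Delta_{(3,3,4)}$. The Bolza surface is a regular branched cover of the $(2,3,8)$-orbifold whose deck group is its full, order-$48$, automorphism group; equivalently $B \triangleleft \Delta_{(2,3,8)}$. Restricting this normality to the index-two subgroup of \Lref{kerh} gives $B \triangleleft \Delta_{(3,3,4)}$, so together with $\bar t \in B$ I obtain $N \subseteq B$. Now the indices agree: by Gauss-Bonnet the Bolza surface has area $4\pi$ while $\mcal{H}^2/\Delta_{(3,3,4)}$ has area $\pi/6$ (as computed in Section~\ref{sec:8}), yielding $[\Delta_{(3,3,4)} : B] = 24$; and \Pref{102} states $[\Delta_{(3,3,4)} : N] = 24$. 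Two subgroups of the same finite index with one contained in the other must coincide, so $N = B$, and pulling back along $\qb^1 \twoheadrightarrow \qb^1/\set{\pm 1}$ yields the corollary.

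The main obstacle is the geometric step: concretely locating the axes of $(\alpha\beta)^2$ and $(\beta\alpha)^2$ in the upper half-plane and checking that their fixed points project to antipodal Weierstrass points on a \emph{single} systolic loop (and not, say, to two unrelated Weierstrass points or to an antipodal pair on a non-systolic closed geodesic). This is precisely the delicate upper half-plane analysis that the paper declines to reproduce, and is the substantive input beyond the purely algebraic/combinatorial content of \Pref{102}. Once this geometric identification is granted, the remainder of the argument is the clean area--index comparison sketched above.
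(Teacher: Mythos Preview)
Your argument is logically sound but takes a genuinely different route from the paper's. You argue $N \subseteq B$ via the geometric content of \Lref{111} (placing the composition $\bar t$ on a systolic loop of the Bolza surface, hence in $B$), and then upgrade to equality by matching indices. The paper instead works in the opposite direction: it uses the full output of \Pref{102} --- four generators and a single length-$8$ relation --- to read off that $\mcal H^2/N$ is a closed genus~$2$ surface, and then invokes the classification fact (Bujalance--Singerman) that the Bolza surface is the \emph{unique} genus~$2$ surface carrying a $(3,3,4)$ tiling. Thus the paper never needs to know $\bar t \in B$ a priori; the upper half-plane analysis you rely on is used there only heuristically, to \emph{discover} the element, not to prove the corollary.

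What each approach buys: your route is conceptually cleaner and explains geometrically why this particular element works, but its load-bearing step is exactly the unreproduced analysis you flag as the main obstacle --- so as written it is conditional. The paper's route trades that geometric verification for the \magma{}-checked presentation plus an external uniqueness citation, making the proof self-contained modulo \Pref{102} and avoiding any delicate axis computations in $\mcal H^2$.
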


\begin{proof}
The presentation of the Fuchsian group given in Proposition~\ref{102}
implies that the surface has genus 2.  This identifies it as the Bolza
surface which is the unique genus-2 surface admitting a tiling of type
$(3,3,4)$ or (2,3,8); see Bujalance \& Singerman (1985
\cite[p.~518]{BS85}).  This surface is known to have the largest
systole in genus~$2$, or equivalently largest trace~$2(1+\sqrt{2})$
(see e.g., Bavard \cite[p.~6]{Bav2}, Katz \& Sabourau \cite{KS},
Schmutz \cite{Sc1}).  Therefore all 4 generators specified in
Proposition~\ref{102} correspond to systolic loops.
\end{proof}

\section{An elliptic element of order 2}
\label{nine}

The principal congruence subgroup $\qb^1(\sqrt{2})$ contains the
Fuchsian group of the Bolza surface (see \Lref{112}), but it also
contains torsion elements.  The element
\begin{equation}\label{varpidef}
\varpi=1+\sqrt{2}\alpha\beta
\end{equation}
in $\qb^1(\sqrt{2})$ defines an elliptic (torsion) element of
order~$2$ in the Fuchsian group.  Indeed,
%
%by Proposition~\ref{44}, item~(3)
%
applying the relations given in \Tref{43b}, we have~$(\alpha
\beta)^2={{-1}}-\sqrt{2}\alpha\beta$.  Hence
\[
\varpi^2=(1+ \sqrt{2} \alpha\beta)^2 = 1 + 2 \sqrt{2}\alpha\beta + 2
(\alpha\beta)^2 = {{-1}}
\]
and therefore~$\varpi$ is of order 2 in the Fuchsian group.

By the above,~$\varpi = - (\alpha\beta)^2$.  The fixed point of~$\varpi$
can be taken to be the vertex of a~$(3,3,4)$ triangle where the angle
is~$\pi/4$.  The element~$\alpha\beta$ gives a rotation by~$\pi/2$ around
this vertex, and therefore~$\varpi$ gives the rotation by~$\pi$ around the
vertex of the~$(3,3,4)$ triangle where the angle is~$\pi/4$.

\begin{lemma}
The action of~$\varpi$ descends to the Bolza surface and coincides with the
hyperelliptic involution of the surface.
\end{lemma}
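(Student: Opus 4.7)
The plan has two steps: first show that $\varpi$ descends to an involution $\bar\varpi$ on $M=\mcal H^2/B$, and then identify $\bar\varpi$ with the hyperelliptic involution by proving that it fixes all six Weierstrass points.

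\textbf{Descent.} Since $B$ is normal in $\qb^1/\set{\pm 1}$ by \Cref{114} and $\varpi\in\qb^1(\sqrt 2)/\set{\pm 1}\subset\qb^1/\set{\pm 1}$, conjugation by $\varpi$ preserves $B$ and so $\varpi$ descends to an automorphism $\bar\varpi$ of $M$. The relation $\varpi^2={{-1}}$ in $\qb$ forces $\bar\varpi^2=\mathrm{id}_M$.

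\textbf{Centrality modulo $B$.} In $\qb^1/\set{\pm 1}\cong\Delta_{(3,3,4)}$ we have $\varpi\equiv(\alpha\beta)^2$. The elementary identities
\[
\alpha^{-1}(\alpha\beta)^2\alpha=(\beta\alpha)^2 \qquad\text{and}\qquad \beta(\alpha\beta)^2\beta^{-1}=(\beta\alpha)^2
\]
(the second using $\beta^3=1$) yield, by direct rearrangement,
\[
[\varpi,\alpha^{-1}]\;=\;[\varpi,\beta]\;=\;(\alpha\beta)^2(\beta\alpha)^{-2}.
\]
By \Cref{114} this element lies in $B$. Since $\alpha$ and $\beta$ generate $\qb^1/\set{\pm 1}$, the image of $\varpi$ is central in $\qb^1/\sg{-1,B}$.

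\textbf{Fixed points.} Let $P\in\mcal H^2$ denote the fixed point of $\varpi$. For $h\in\qb^1/\set{\pm 1}$, the image $[hP]\in M$ is fixed by $\bar\varpi$ precisely when $h^{-1}\varpi h\in\sg{\alpha\beta}\cdot B$, since the stabilizer of $P$ is $\sg{\alpha\beta}$. By the centrality just established, $h^{-1}\varpi h\equiv\varpi\pmod B$, and since $\varpi=(\alpha\beta)^2\in\sg{\alpha\beta}$, the condition holds for every $h$.

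\textbf{Identification.} By \Sref{eight}, $P$ is a $\pi/4$-vertex of the $(3,3,4)$-tiling and projects to a Weierstrass point of $M$, and the orbit of this point under $\qb^1/\sg{-1,B}$ has size $24/|\sg{\alpha\beta}|=6$ (the intersection $\sg{\alpha\beta}\cap B$ is trivial because $B$ is torsion-free). Hence all six Weierstrass points are fixed by $\bar\varpi$. For a degree-$2$ cover of a genus-$2$ surface, Riemann--Hurwitz permits only $2$ or $6$ fixed points for an involution, and the $6$-fixed-point case is by definition the hyperelliptic involution, so $\bar\varpi$ is the hyperelliptic involution of $M$.

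The main obstacle is the centrality step: one must recognize that both commutators $[\varpi,\alpha^{-1}]$ and $[\varpi,\beta]$ collapse, via the triangle-group relations, to the same word $(\alpha\beta)^2(\beta\alpha)^{-2}$ that normally generates $B$. Once this is observed, the fixed-point analysis and identification are immediate from \Cref{114}, \Sref{eight}, and Riemann--Hurwitz.
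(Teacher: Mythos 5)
Your proof is correct, but it follows a genuinely different route from the paper's. The paper argues geometrically and very briefly: by \Sref{eight}, the fixed point of $\varpi$ is a $\pi/4$-vertex of the $(3,3,4)$ tiling, i.e.\ a lift of a Weierstrass point, and since $\varpi$ is the rotation by $\pi$ about that point while the quotient sphere has cone angle $\pi$ there, $\varpi$ descends to the identity map of the Riemann sphere; hence it is a lift of the identity through the hyperelliptic double cover, and being nontrivial it must be the hyperelliptic involution. You instead argue algebraically: descent via normality of $B$ (\Cref{114}); centrality of $\varpi$ modulo $B$ from the identities $[\varpi,\alpha^{-1}]=[\varpi,\beta]=(\alpha\beta)^2(\beta\alpha)^{-2}\in B$; the resulting fact that $\bar\varpi$ fixes all six points of $M$ lying over the order-$4$ cone point; and Riemann--Hurwitz to conclude that an orientation-preserving involution of a genus-$2$ surface with six fixed points is the hyperelliptic involution. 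Your computations check out (both commutators do collapse to the normal generator of $B$, and the orbit count $24/4=6$ is right), and your route has the merit of not really needing the double-cover picture of \Sref{eight} -- six fixed points suffice whether or not one knows they are Weierstrass points -- while also recovering by hand the centrality of the image of $\varpi$ in $\sqb/B\cong\SL(\F_3)$, which the paper only establishes later (\Pref{someiso}); the paper's proof, in exchange, is much shorter because it exploits the geometry already set up. One point you should make explicit: $\bar\varpi\neq\operatorname{id}$, equivalently $\varpi\notin B$, is needed before the Riemann--Hurwitz step applies; it follows at once from the torsion-freeness of $B$ that you already invoke (an element of $\PSL(\R)$ inducing the identity on $\mcal H^2/B$ must lie in $B$).
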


\begin{proof}
The involution~$\varpi$ is a rotation by~$\pi$ around a Weierstrass point
(see Section~\ref{eight}), namely the vertex of the~$(3,3,4)$ triangle
where the angle is~$\pi/4$.  Therefore~$\varpi$ descends to the identity on
the Riemann sphere.  Thus~$\varpi$ lifts to the hyperelliptic involution
of~$M$.
\end{proof}

\section{Quotients of the Bolza order}\label{sec:12}

In the next section we compare the Bolza group with some principal
congruence subgroups of the Bolza order. To this end, we need to
compute quotients of the Bolza order $\qb$.

\begin{remark}\label{pres}
In \Tref{43b} we obtained the presentation
\[
\qb = O_K\big[\alpha,\beta \,|\, \alpha^2_{\phantom{I}}={{-1}}+\alpha,\,
\beta^2={{-1}}+\beta, \,
\beta\alpha=({{-1}}-\sqrt{2})+\alpha+\beta-\alpha\beta\big].
\]
%% This is true as well:
% Adjoining $\frac{1}{3}$, we can take $\beta = \beta' + \frac{1+2\sqrt{2}}{3} \alpha +\frac{1-\sqrt{2}}{3}$, and then
%$$\qb[\frac{1}{3}] = O_K[\frac{1}{3}][\alpha,\beta' \,|\, \alpha^2={{-1}}+\alpha,\, \beta'^2=\frac{\sqrt{2}}{3}, \, \beta'\alpha+\alpha\beta' =\beta'].$$
\end{remark}

The symplectic involution $z \mapsto z^*$ on the quaternion algebra $D$ (of \eq{D_MR}) is defined by $i^* = -i$ and $j^* = -j$. It follows from the definition of $\alpha,\beta$ in \eq{a_re} and \eq{b_re} that
\begin{equation}\label{thisisinv}
\alpha^* = 1 - \alpha,\qquad  \beta^* = 1-\beta;
\end{equation}
so in particular the order $\qb$ is preserved under the involution. This is particularly useful for the computation of the groups, because the norm is defined by $N(x) = xx^*$ for every $x \in D$.

\subsection{The Bolza order modulo $2$}

\def\qbb{{\overline{\qb}}}     % The order mod 2.        (ring of order 256)
\def\qbbb{{\widetilde{{\qb}}}} % The order mod sqrt{2}.  (ring of order 16)

Let us compute the ring $\qbb = \qb/2\qb$, which will be used below to compute the index of $\qb^1(2)$ in $\qb^1$.

Notice that $O_K/2O_K = \Z[\sqrt{2}]/2\Z[\sqrt{2}] = \F_2[\epsilon \,|\, \epsilon^2 = 0]$, where $\epsilon$ stands for the image of $\sqrt{2}$ in the quotient ring.
\begin{prop}
$\qbb = \qb/2\qb$ is a local noncommutative ring with $256$ elements, whose residue field has order $4$, and whose maximal ideal $J$ has nilpotency index $4$. Moreover each of the quotients $J/J^2$, $J^2/J^3$ and $J^3 = J^3/J^4$ is one-dimensional over $\qbb/J \,\cong\, \F_4$.
\end{prop}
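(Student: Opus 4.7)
The plan is to exhibit $\qbb$ as a local noncommutative ring by constructing a surjection onto $\F_4$ and identifying a single ``uniformiser'' $u \in \qbb$ whose powers filter the maximal ideal.  Since $\qb$ is free of rank $4$ over $O_K$ by \Tref{43b}, we immediately have $\abs{\qbb} = \abs{O_K/2O_K}^4 = 4^4 = 256$.  To produce the residue map, I would define $\phi \co \qbb \to \F_4 = \F_2[\omega \,|\, \omega^2+\omega+1 = 0]$ by $\alpha \mapsto \omega$, $\beta \mapsto \omega+1$, $\epsilon \mapsto 0$; each of the three defining relations of \Rref{pres}, reduced modulo~$2$, is verified in $\F_4$ by direct substitution.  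Setting $J := \ker\phi$ then gives $\qbb/J \cong \F_4$ and $\dim_{\F_2} J = 6$.

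The key computation is the identity $u^2 = \epsilon$ in $\qbb$, where $u := 1+\alpha+\beta \in J$.  Expanding in $\qb$ using the multiplication relations of \Tref{43b} yields $u^2 = -2+4\alpha+4\beta-\sqrt{2}$, which reduces modulo~$2$ to $\sqrt{2} \equiv \epsilon$.  Since $\epsilon^2 = 0$ in $\qbb$, this also gives $u^4 = 0$.  I would then show $J = \qbb u = u\qbb$: the symplectic involution satisfies $u^* = 3-\alpha-\beta \equiv u \pmod{2}$ by~\eqref{thisisinv}, and since $*$ interchanges $\qbb u$ and $u\qbb$ the two one-sided ideals share the same $\F_2$-dimension; it therefore suffices to check $\dim_{\F_2}(\qbb u) = 6$ by listing $qu$ for $q$ in the natural $\F_2$-basis $\{1,\alpha,\beta,\alpha\beta,\epsilon,\epsilon\alpha,\epsilon\beta,\epsilon\alpha\beta\}$ of $\qbb$ and row-reducing.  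Since $\qbb u \sub J$, equality follows.

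Once $J = \qbb u = u\qbb$ is established, the filtration $\qbb \supset J \supset J^2 \supset J^3 \supset J^4 = 0$ is forced.  Any product $xy \in J^2$ can be written, via $x = ua$ and $y = bu$, as $u(ab)u$; using $u\qbb = \qbb u$ to push one factor of $u$ past $ab$ gives $xy \in \qbb u^2 = \epsilon\qbb$.  Combined with $\epsilon = u^2 \in J^2$, this yields $J^2 = \epsilon\qbb$, of $\F_2$-dimension~$4$.  Centrality of $\epsilon$ gives $J^3 = \epsilon J$; and since multiplication by $\epsilon$ on the free $(O_K/2O_K)$-module $\qbb$ has kernel exactly $\epsilon\qbb = J^2$, it induces an isomorphism $J/J^2 \cong J^3$, forcing $\dim_{\F_2} J^3 = 2$.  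Finally $J^4 = \epsilon J^2 = \epsilon^2\qbb = 0$.  Nilpotency of $J$ places it inside the Jacobson radical; as $\qbb/J$ is a field, $J$ equals this radical, so $\qbb$ is local with maximal ideal $J$.  Each $J^i/J^{i+1}$ has $\F_2$-dimension~$2$ and is a $\qbb/J = \F_4$-module, hence is one-dimensional over $\F_4$.

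The main obstacle is the step identifying $u$ as a one-sided generator of $J$, which on its face requires an explicit linear-algebra calculation in the $8$-dimensional $\F_2$-space $\qbb$.  A more structural alternative would invoke that $\qb$ is locally at~$\sqrt{2}$ the unique maximal order of the ramified local division algebra $D_{B,\sqrt{2}}$ (\Pref{42a}), so that $\qbb$ becomes $\mathcal{O}_{\sqrt{2}}/\pi^4$ for a uniformiser $\pi$ with $\pi^2$ an associate of~$\sqrt{2}$; the $\F_4$-structure of each successive quotient then becomes the standard one, but the explicit identity $u^2 = \epsilon$ remains needed to pin down the uniformiser in our basis.
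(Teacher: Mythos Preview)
Your argument is correct and is essentially the paper's own approach: the key is a uniformiser in~$\qbb$ whose square is~$\epsilon$. The paper's execution is marginally tidier. Instead of your $u = 1+\alpha+\beta$, it substitutes $\beta' = 1+\alpha+\beta+\epsilon$ (your~$u$ shifted by the higher-order term~$\epsilon$, so $\beta' = u(1+u)$ and the two generate the same one-sided ideals). With that choice the defining relations of \Rref{pres} reduce modulo~$2$ to the clean presentation
\[
\qbb \;=\; \F_2[\epsilon\,|\,\epsilon^2=0]\bigl[\alpha,\beta'\ \big|\ \alpha^2=1+\alpha,\ \beta'^2=\epsilon,\ \beta'\alpha+\alpha\beta'=\beta'\bigr],
\]
from which one reads off $\qbb = \F_4 \oplus \F_4\beta' \oplus \F_4\epsilon \oplus \F_4\epsilon\beta'$ with $\F_4=\F_2[\alpha]$, and then $J=\beta'\qbb$, $J^2=\epsilon\qbb$, $J^3=\epsilon\beta'\qbb$ are immediate without the row-reduction you flag as the main obstacle. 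Your involution observation $u^*\equiv u$ and the structural remark via the ramified local division algebra are nice touches absent from the paper's treatment.
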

\begin{proof}
Replacing $\beta$ by $\beta' = \beta+\alpha+1+\epsilon$ in the presentation of \Rref{pres}, we obtain the quotient
$$\qbb = \F_2[\epsilon \,|\, \epsilon^2 = 0][\alpha,\beta' \,|\, \alpha^2=1+\alpha,\, \beta'^2   = \epsilon, \, \beta'\alpha+\alpha\beta'=\beta'],$$
where $\epsilon$ is understood to be central (which actually follows from the relations).

This ring has a maximal ideal $J = \beta'\qbb$, with $J^2 =
\epsilon\qbb$ and $J^3 = \epsilon \beta'\qbb$, and with a quotient
ring
\[
\qbb/J = \F_2[\alpha \,|\, \alpha^2 = 1+\alpha] \,\cong\,
\F_4.
\]
Taking $\F_4 = \F_2[\alpha] = \F_2+\F_2\alpha$, we obtain
$$
\qbb = \F_4 \oplus \F_4 \beta' \oplus \F_4 \epsilon \oplus \F_4
\epsilon\beta',
$$ where $\beta'$ acts on $\F_4$ by $\beta' \alpha =
(\alpha+1)\beta'$, $\beta'^2 = \epsilon$ and $\epsilon^2 = 0$, so the
ring has $256$ elements.
\end{proof}

\subsection{The quotients $\qbbb = \qb/\sqrt{2}\qb$}

Since $\epsilon$ stands for the image of $\sqrt{2}$ in $\qbb$, we immediately obtain the quotient $\qb/\sqrt{2}\qb = \qbb / \epsilon \, \qbb$:
\begin{prop}\label{modsq2}
$\qbbb = \qb/\sqrt{2}\qb$ is a local noncommutative ring with a maximal ideal with $4$ elements and a quotient field of order $4$.
\end{prop}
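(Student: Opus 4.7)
The plan is to realize $\qbbb$ as a quotient of the ring $\qbb$ already analyzed in the previous proposition, and to read off its structure from the filtration by powers of the maximal ideal $J$ that was computed there. The key observation is that since $2 = \sqrt{2}\cdot\sqrt{2}$, we have $2\qb \subset \sqrt{2}\qb$, so there is a natural surjection $\qbb \twoheadrightarrow \qbbb$. I would describe its kernel as follows: under the quotient map $\qb \to \qbb$, the element $\sqrt{2}$ maps to $\epsilon$, so the image of $\sqrt{2}\qb$ in $\qbb$ is exactly $\epsilon\,\qbb$, which the previous proposition identifies with $J^2$. Hence $\qbbb \cong \qbb/J^2$.

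From the decomposition $\qbb = \F_4 \oplus \F_4\beta' \oplus \F_4\epsilon \oplus \F_4\epsilon\beta'$ and the identifications $J = \beta'\qbb$, $J^2 = \epsilon\qbb$, the quotient $\qbbb = \qbb/J^2$ inherits the decomposition $\qbbb = \F_4 \oplus \F_4\overline{\beta'}$, with maximal ideal $\overline{J} = J/J^2 = \F_4\overline{\beta'}$ consisting of $4$ elements. The residue field is $\qbbb/\overline{J} = \qbb/J \cong \F_4$, and locality is automatic for the quotient of a local ring by an ideal contained in its Jacobson radical.

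For noncommutativity I would invoke the surviving commutation relation: in $\qbb$ one has $\beta'\alpha = (\alpha+1)\beta'$, so in $\qbbb$ the commutator $\alpha\overline{\beta'} - \overline{\beta'}\alpha = \overline{\beta'} \neq 0$ (it is nonzero since $\overline{\beta'} \notin J^2$). Thus $\qbbb$ is noncommutative, completing the description.

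I do not foresee a real obstacle here: once the identification $\sqrt{2}\qb/2\qb = J^2$ is made, everything else is a direct transcription of the structure recorded in the previous proposition. The only point requiring a moment of care is verifying that the image of $\sqrt{2}\qb$ in $\qbb$ really is $J^2$ rather than some larger ideal; this follows because $\epsilon\qbb$ is two-sided and contains $\epsilon = \overline{\sqrt{2}}$, and conversely every element of $\epsilon\qbb$ is represented by $\sqrt{2}y$ for some $y \in \qb$.
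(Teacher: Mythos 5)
Your proposal is correct and takes essentially the same route as the paper: the paper also realizes $\qb/\sqrt{2}\qb$ as the quotient of $\qb/2\qb$ by the ideal generated by $\epsilon$ (the image of $\sqrt{2}$), i.e.\ by setting $\epsilon=0$ in the presentation, which gives $\F_4\oplus\F_4\beta'$ with maximal ideal $\beta'(\qb/\sqrt{2}\qb)$ of four elements, square zero, and residue field $\F_4$. Your additional identification of $\epsilon(\qb/2\qb)$ with $J^2$ is consistent with the filtration recorded in the preceding proposition and changes nothing of substance.
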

\begin{proof}
Taking $\epsilon = 0$ in the presentation of $\qbb = \qb/2\qb$ obtained above, we get
$$\qbbb  = \F_2[\alpha,\beta' \,|\, \alpha^2=1+\alpha,\, \beta'^2   = 0, \, \beta'\alpha+\alpha\beta'=\beta'],$$
which can be written as
$$\qbbb = \F_4 \oplus \F_4 \beta';$$
this quotient of $\qbb = \qb/2\qb$ has $16$ elements.
The ideal
\[
\beta'\qbbb= \F_2\beta'+\F_2\alpha \beta'
\]
has four elements, and $(\beta'\qbbb)^2 = 0$.
%the quotient field is $\F_2[\alpha] = \F_4$, with $4{{-1}}=3$ elements of norm $1$.
\end{proof}

\subsection{Involution and norm}

The involution defined on $\qb$ clearly preserves $2\qb$, so it induces an involution on the quotient $\qbb$. Using \eq{thisisinv}, we conveniently have that $\beta'^* = \beta^*+\alpha^*+1+\epsilon = \beta'$.

The subring $\F_2[\epsilon,\alpha]$ of $\qbb$ is commutative, and the involution induces the automorphism $\s$ of $\F_2[\epsilon,\alpha]$  defined by  $\s(\alpha) = \alpha+1$ and $\s(\epsilon) = \epsilon$. The norm defined above coincides with the Galois norm,
$$N(x_0+x_1\alpha) = (x_0+x_1\alpha)(x_0+x_1(\alpha+1)) = x_0^2+x_0x_1+x_1^2$$
for $x_0,x_1 \in \F_2[\epsilon]$.
Furthermore, writing
$$\qbb = \F_2[\epsilon,\alpha] \oplus \F_2[\epsilon,\alpha]\beta',$$
we have for $y_0,y_1 \in \F_2[\epsilon,\alpha]$ that $(y_0+y_1 \beta')^* = y_0^*+\beta'y_1^* = y_0^*+y_1\beta'$.
Therefore, for every $y_0,y_1 \in \F_2[\epsilon,\alpha]$,
$$N(y_0+y_1\beta') = (y_0+y_1\beta')(y_0^*+y_1\beta') = N(y_0) + N(y_1)\epsilon \in \F_2[\epsilon].$$
Together, we have
$$N(x_{00}+x_{01}\alpha+x_{10}\beta'+x_{11}\alpha\beta') = (x_{00}^2+x_{00}x_{01}+x_{11}^2)+(x_{10}^2+x_{10}x_{11}+x_{11}^2)\epsilon$$
for every $x_{00},x_{01},x_{10},x_{11} \in \F_2[\epsilon]$.

Clearly, an element is invertible if and only if its norm is invertible. There are two invertible elements in $\F_2[\epsilon]$, namely $1$ and $1+\epsilon$, and $1+\epsilon = N(1+\epsilon\alpha)$ is obtained as a norm, so we conclude:
\begin{cor}\label{hasindex2}
The subgroup $\qbb^1 = \set{ x \in \qbb \,:\, N(x) = 1}$ has index $2$ in the group of invertible elements
$\mul{\qbb}$.
\end{cor}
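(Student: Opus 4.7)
The plan is to deduce the corollary from the multiplicativity of the reduced norm combined with the explicit description of $\mul{\F_2[\epsilon]}$.

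First I would observe that since $N(xy) = N(x)N(y)$ for all $x,y \in \qbb$, the restriction $N \co \mul{\qbb} \to \mul{\F_2[\epsilon]}$ is a group homomorphism, and its kernel is exactly $\qbb^1$. By the first isomorphism theorem, the index $[\mul{\qbb}:\qbb^1]$ equals the order of the image of $N$ in $\mul{\F_2[\epsilon]}$.

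Next I would enumerate $\mul{\F_2[\epsilon]}$: the ring $\F_2[\epsilon]$ has four elements $0, 1, \epsilon, 1+\epsilon$, and an element $a+b\epsilon$ is a unit precisely when $a = 1$, so $\mul{\F_2[\epsilon]} = \set{1, 1+\epsilon}$ is a group of order $2$. Thus $[\mul{\qbb}:\qbb^1]$ is either $1$ or $2$.

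To finish, I would exhibit an element whose norm is the nontrivial unit $1+\epsilon$. Using the formula $N(x_0+x_1\alpha) = x_0^2+x_0x_1+x_1^2$ from the preceding discussion, one computes directly that
\[
N(1+\epsilon\alpha) = 1 + \epsilon + \epsilon^2 = 1+\epsilon,
\]
since $\epsilon^2 = 0$ in $\F_2[\epsilon]$. Note that $1+\epsilon\alpha$ is itself invertible in $\qbb$ because its norm $1+\epsilon$ is a unit in $\F_2[\epsilon]$, so this element lies in $\mul{\qbb}$. Therefore the image of $N$ is all of $\mul{\F_2[\epsilon]}$, and the index is exactly $2$. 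There is no real obstacle here; the computation of $\mul{\F_2[\epsilon]}$ and the explicit norm calculation together close the argument immediately.
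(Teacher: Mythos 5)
Your proposal is correct and follows essentially the same route as the paper: both arguments use the multiplicativity of the norm, the observation that $\mul{\F_2[\epsilon]}=\set{1,1+\epsilon}$, and the explicit computation $N(1+\epsilon\alpha)=1+\epsilon$ to show the norm map from $\mul{\qbb}$ is onto a group of order $2$ with kernel $\qbb^1$. The only cosmetic difference is that you phrase it via the first isomorphism theorem, while the paper simply notes that an element is invertible if and only if its norm is.
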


In contrast, when we reduce further to the quotient
$\qbbb = \qb/\sqrt{2}\qb$, which is equal to
$\qbb/\epsilon \qbb$, the induced norm function
takes values in $\F_2[\epsilon]/\epsilon\F_2[\epsilon] = \F_2$, where only
the identity is invertible.  We therefore obtain the following
corollary.
\begin{cor}\label{hasindex1}
The subgroup $\qbbb^1 = \set{ x \in \qbbb \,:\, N(x) = 1}$ is equal to $\mul{\qbbb}$.
\end{cor}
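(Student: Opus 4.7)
The plan is to exploit the fact that the norm function on $\qbbb$ takes values in a very small ring, namely $\F_2$, leaving no room for any invertible elements to escape having norm $1$.

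First I would set up the norm on $\qbbb$. Since $\qbbb = \qbb/\epsilon\qbb$, and the involution on $\qbb$ preserves $\epsilon\qbb$ (because $\epsilon^* = \epsilon$, as $\epsilon$ is central and fixed by the involution), the involution descends to $\qbbb$, and hence so does the norm $N(x) = xx^*$. From the explicit formula
\[
N(x_{00}+x_{01}\alpha+x_{10}\beta'+x_{11}\alpha\beta') = (x_{00}^2+x_{00}x_{01}+x_{11}^2)+(x_{10}^2+x_{10}x_{11}+x_{11}^2)\epsilon
\]
derived just before the corollary, we see that reducing modulo $\epsilon$ (i.e., passing to $\qbbb$) kills the $\epsilon$-coefficient, so the induced norm takes values in $\F_2[\epsilon]/\epsilon\F_2[\epsilon] = \F_2$.

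Next I would use the standard fact that an element $x$ of a ring with an involutive anti-automorphism satisfying $xx^* = x^*x$ (central) is invertible if and only if its norm $N(x) = xx^*$ is invertible; here this applies because $N(x)$ lies in the commutative center. In $\F_2$ the only invertible element is $1$. Therefore, if $x \in \mul{\qbbb}$, then $N(x) \in \mul{\F_2} = \set{1}$, forcing $N(x) = 1$ and thus $x \in \qbbb^1$. The reverse inclusion $\qbbb^1 \subseteq \mul{\qbbb}$ is immediate, since $N(x) = 1$ means $x x^* = 1$, so $x^*$ is a two-sided inverse of $x$.

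The hard part here is essentially nonexistent: this corollary is a direct consequence of the computation of the norm in the preceding subsection combined with the fact that $\F_2$ has only one unit. The contrast with \Cref{hasindex2} is precisely that in $\F_2[\epsilon]$ there are two units ($1$ and $1+\epsilon$), whereas collapsing $\epsilon$ to $0$ leaves only one, eliminating the index-$2$ obstruction.
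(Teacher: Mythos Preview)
Your proof is correct and follows essentially the same approach as the paper: both argue that the induced norm on $\qbbb$ takes values in $\F_2[\epsilon]/\epsilon\F_2[\epsilon] = \F_2$, whose only unit is $1$, so every invertible element automatically has norm $1$. You supply a bit more detail (why the involution descends, the reverse inclusion), but the core idea is identical.
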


\subsection{Subgroups of $\mul{\qbb}$}

The ring $\qbb = \qb / 2\qb$ has a unique maximal ideal $J = \beta'\qbb$, and its powers are
\[0 = J^4 \subset J^3 = \epsilon\beta'\qbb \subset J^2 = \epsilon\qbb \subset J = \beta'\qbb.
\]
Similarly to congruence subgroup of $\qb$, for every ideal $I \normali \qbb$ which is stable under the involution (so that the involution and thus the norm are well defined on the quotient $\qbb/I$), we have the subgroups
$$\qbb^1(I) = \qbb^1 \cap (1+I)$$
and
$$\qbb^\times(I) = \qbb^\times \cap (1+I);$$
when $I = x\qbb$, we write $\qbb^1(x)$ and $\qbb^\times(x)$ for $\qbb^1(x\qbb)$ and $\qbb^\times(x\qbb)$, respectively.

\begin{prop}
The numbers along edges in Figure~\ref{sgoQ} are the relative
indices of the depicted subgroups.
\end{prop}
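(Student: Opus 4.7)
The plan is to extract every edge-index of Figure~\ref{sgoQ} from two ingredients: the filtration $U_k = 1 + J^k$ of $\mul{\qbb}$ and the norm map $N \co \mul{\qbb} \to \mul{\F_2[\epsilon]}$ whose kernel is $\qbb^1$. Together these give a complete Hasse diagram whose edges are then labelled by Lagrange quotients.

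First I would record the vertical (filtration) indices. By the preceding proposition, $|J^k/J^{k+1}| = 4$ for $k = 0,1,2,3$, so the standard isomorphism $U_k/U_{k+1} \cong J^k/J^{k+1}$ (for $k \geq 1$, via $1+y \mapsto y$) gives $|U_1| = 64$, $|U_2| = 16$, $|U_3| = 4$. Since $\mul{\qbb}/U_1 \cong \mul{(\qbb/J)} = \mul{\F_4}$ has order $3$, this yields $|\mul{\qbb}| = 192$, and hence $|\qbb^1| = 96$ by Corollary~\ref{hasindex2}.

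Next I would determine how $\qbb^1$ cuts through this tower. Because $[\mul{\qbb}:U_1] = 3$ is coprime to $[\mul{\qbb}:\qbb^1] = 2$, we automatically get $U_1 \qbb^1 = \mul{\qbb}$, whence $[U_1 : U_1 \cap \qbb^1] = 2$ by the second isomorphism theorem. For the remaining layers I would use the explicit formula for $N$ on $U_k$. If $y \in J^2 = \epsilon\qbb$, write $y = \epsilon z$; since $yy^* \in J^4 = 0$, we have $N(1+y) = 1 + \epsilon(z + z^*)$, so membership in $\qbb^1$ is governed by the symmetric-part condition on $J^2/J^4$. Using $\alpha^* = 1+\alpha$, $\beta'^* = \beta'$, $\epsilon$ central and self-adjoint, and the commutation $\beta'\alpha = \alpha\beta' + \beta'$, a direct expansion in the basis $\{1,\alpha,\beta',\alpha\beta'\}$ shows that the $\F_2$-linear form $y \mapsto y + y^*$ on $J^2$ has a codimension-$1$ kernel, giving $[U_2 : U_2 \cap \qbb^1] = 2$. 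The same basis calculation applied to $J^3$ shows that every element is self-adjoint, so $U_3 \subset \qbb^1$.

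These computations fix every edge. On the right-hand column we read off $[\mul{\qbb}:U_1] = 3$, $[U_k : U_{k+1}] = 4$ for $k = 1,2,3$; on the left-hand column, $[\qbb^1 : \qbb^1 \cap U_1] = 3$, $[\qbb^1 \cap U_1 : \qbb^1 \cap U_2] = 4$, $[\qbb^1 \cap U_2 : \qbb^1 \cap U_3] = 2$, and $[\qbb^1 \cap U_3 : \{1\}] = 4$, matching the orders $96, 32, 8, 4, 1$. The horizontal edges are labelled $2$ at the top two levels and $1$ at the bottom two, since $U_3 \subset \qbb^1$. The main obstacle is the bookkeeping that determines the symmetric parts of $J^2$ and $J^3$: the non-commutativity forces one to carefully track how $\beta'$ interacts with $\alpha$ under the involution, and in particular to verify that the $*$-fixed elements of $J^2$ form exactly a hyperplane and those of $J^3$ fill all of $J^3$. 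Everything else is then forced by Lagrange.
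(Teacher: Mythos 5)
Your argument is essentially the paper's own: both proofs play the filtration $1+J\supset 1+J^2\supset 1+J^3$ of $\mul{\qbb}$ against the norm map. The paper computes $\qbb^\times/\qbb^\times(\beta')\cong\F_4^\times$ and $\qbb^\times(J^i)/\qbb^\times(J^{i+1})\cong\F_4^+$ (your $U_k/U_{k+1}\cong J^k/J^{k+1}$), gets the horizontal $2$'s from the element $1+\epsilon\alpha\in\qbb^\times(\epsilon)$ of norm $1+\epsilon$, and gets the equality at the bottom level from $N(1+x\epsilon\beta')=1$; your expansion of $N(1+y)=1+y+y^*$ on $J^2$ and $J^3$, and the coprimality/second-isomorphism argument at the $\beta'$-level, are the same computations in slightly different packaging, and your order bookkeeping ($192,\,96,\,64,\,32,\,16,\,8,\,4$) is correct.

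Two small defects should be repaired. First, your closing sentence misstates the horizontal labels: the index $[\,1+J^k:\qbb^1\cap(1+J^k)\,]$ equals $2$ at the three levels $k=0,1,2$ and equals $1$ only at $k=3$; saying ``$2$ at the top two levels and $1$ at the bottom two'' contradicts your own (correct) computation $[U_2:U_2\cap\qbb^1]=2$ and the figure's edge of index $2$ between $\qbb^\times(\epsilon)$ and $\qbb^1(\epsilon)$. Second, the figure also contains the node $\sg{1+\epsilon\alpha}$, with edges labelled $8$ (below $\qbb^\times(\epsilon)$) and $2$ (above $1$); these edges are not produced by your filtration and need the one-line observations that $(1+\epsilon\alpha)^2=1$, so $\sg{1+\epsilon\alpha}$ has order $2$, while $\abs{\qbb^\times(\epsilon)}=\abs{U_2}=16$. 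With these repairs the proof is complete and matches the paper's.
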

\begin{proof}
The argument leading to \Cref{hasindex2} also implies that
$$\dimcol{\qbb^\times(\beta')}{\qbb^1(\beta')} = \dimcol{\qbb^\times(\epsilon)}{\qbb^1(\epsilon)} = 2,$$
because the invertible element $1+\epsilon\alpha$, whose norm is $1+\epsilon$ and not $1$, is in $\qbb^\times(\epsilon)$. However,
$$\qbb^\times(\epsilon\beta') = \qbb^1(\epsilon\beta')$$
because $N(1+x_3 \epsilon \beta') = 1$ for every $x_3 \in \F_2[\alpha]$. Moreover, since $\qbb$ is explicitly known, it is easy to compute the quotients
$$\qbb^\times/ \qbb^\times(\beta') \cong \F_4^\times$$
and
$$\qbb^\times(J^i)/ \qbb^\times(J^{i+1}) \cong \F_4^+, \qquad (i=1,2,3);$$
together, we have all the indices of the subgroups as depicted in the diagram.
\end{proof}

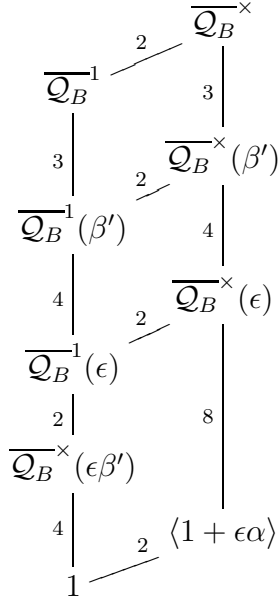
\begin{figure}
$$\xymatrix@R=4pt@C=4pt{
{} & \qbb^\times \ar@{-}[ld]_2 \ar@{-}[dd]_3 \\
\qbb^1 \ar@{-}[dd]_3 & {} \\
{} & \qbb^\times(\beta') \ar@{-}[ld]_2 \ar@{-}[dd]_4 \\
\qbb^1(\beta') \ar@{-}[dd]_4 & {} \\
{} & \qbb^\times(\epsilon) \ar@{-}[ld]_2 \ar@{-}[dddd]_8 \\
\qbb^1(\epsilon) \ar@{-}[dd]_2 & {} \\
{} & {} \\
\qbb^\times(\epsilon\beta') \ar@{-}[dd]_4 & {} \\
{} & \sg{1+\epsilon \alpha} \ar@{-}[dl]_2 \\
1 & {}
}$$
%{} & \qbb^\times(\epsilon\beta') \ar@{-}[r] & \qbb^\times(\epsilon) \ar@{-}[r] &  \qbb^\times(\beta') \ar@{-}[r]&  \qbb^\times
%\\
%1 \ar@{-}[r] & \qbb^1(\epsilon\beta') \ar@{-}[r] \ar@{-}[u] & \qbb^1(\epsilon) \ar@{-}[r] \ar@{-}[u] & \qbb^1(\beta')\ar@{-}[r] \ar@{-}[u] &  \qbb^1 \ar@{-}[u]
\caption{Subgroups of $\qbb^\times$, with relative indices}\label{sgoQ}
\end{figure}

Since we encounter several small classical groups, let us record their interactions.
\begin{remark}\label{smallgp}
The group $A_4$ of even permutation on $4$ letters is isomorphic to $\PSL[2](\F_3)$, and $S_4 \cong \PGL[2](\F_3)$.
The group $A_4$ has two central extensions by $\Z/2\Z$: the trivial one, namely $A_4 \times \Z/2\Z$, and the group $\SL[2](\F_3)$. Likewise
%
%\footnote{As far as GL(2,3) is concerned, we could write to some experts to find a place in the MODERN literature where it is proved that the symmetry group of Bolza is GL(2,3).  Sabourau and I have been mostly interested in the metric properties of the surface so I have to admit we have not looked up the symmetry group as we should have. One possibility is to write to Kallel and/or his coauthors who mention in their paper that Bolza himself was aware of this over 100 years ago.}
%
$\GL[2](\F_3)$ is a central extension of
$S_4$ by $\Z/2\Z$, and we have the short exact sequences
$$\xymatrix{
1 \ar@{->}[r] & \Z/2\Z \ar@{->}[r] \ar@{=}[d] & \GL[2](\F_3) \ar@{->}[r] & \PGL[2](\F_3) \cong S_4 \ar@{->}[r] & 1 \\
1 \ar@{->}[r] & \Z/2\Z \ar@{->}[r] & \SL[2](\F_3) \ar@{->}[r] \ar@{^(->}[u] & \PSL[2](\F_3) \cong A_4 \ar@{->}[r] \ar@{^(->}[u] & 1
}$$
where the image of $\Z/2\Z$ in both groups is central.

Since $A_4$ has the triangle group presentation
\[
\Delta_{(3,3,2)} \cong \sg{x,y \,|\, x^3 = y^3 = (xy)^2 = 1},
\]
it follows that $\SL[2](\F_3)$ can be presented as
$$\sg{x, y \,|\, x^3 = y^3 = (xy)^4 = 1, \, [x,(xy)^2] = 1}.$$
%$$\sg{x, y, t \,|\, x^3 = y^3 = t^2 = 1, \, t = (xy)^2,\, [x,t] = [y,t] = 1}.$$
\end{remark}

%As usual, we denote by $A_4$ the group of even permutations of order $4$, so that $\card{A_4} = 12$.
\begin{prop}\label{someiso}
The following holds for the quotients of $\qbb^1$: % {\ }
\begin{eqnarray}
\label{si0}
\qbb^1 / \qbb^1(\epsilon\beta') & \cong & \SL(\F_3),\\
\label{si1}
\qbb^1 / \qbb^1(\epsilon) & \cong & A_4.
\end{eqnarray}
%\begin{enumerate}
%\item\label{si0} The group $\qbb^1 / \qbb^1(\epsilon\beta')$ is isomorphic to $\SL(\F_3)$.
%\item\label{si1} The group $\qbb^1 / \qbb^1(\epsilon)$ is isomorphic to $A_4$.
%\end{enumerate}
\end{prop}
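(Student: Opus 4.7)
The plan is to exhibit surjections from $A_4$ and from $\SL[2](\F_3)$ onto the respective quotients, using the triangle-group presentations collected in \Rref{smallgp}, and then to compare orders against the indices recorded in Figure~\ref{sgoQ}.

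The first ingredient is that $\qbb^1$ is generated by the images $\bar\alpha,\bar\beta$ of $\alpha$ and $\beta$ in $\qbb$. Since $\qb$ is a maximal order in an algebra satisfying Eichler's condition (\Pref{archram}), strong approximation ensures the reduction map $\qb^1\to\qbb^1$ is surjective; combined with $\qb^1=\sg{\alpha,\beta}$ from \Tref{intro3}, this gives $\qbb^1=\sg{\bar\alpha,\bar\beta}$. From the identities $\alpha^3=\beta^3=(\alpha\beta)^4=-1$ in $\qb^1$ and $-1\equiv 1\pmod 2$, we have $\bar\alpha^3=\bar\beta^3=(\bar\alpha\bar\beta)^4=1$ in $\qbb^1$. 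The key intermediate calculation, already appearing in \Sref{nine}, is $(\alpha\beta)^2=-1-\sqrt{2}\,\alpha\beta$ in $\qb$, which reduces to $(\bar\alpha\bar\beta)^2=1+\epsilon\,\bar\alpha\bar\beta$ in $\qbb$; this element lies in $\qbb^1(\epsilon)$, so $(\bar\alpha\bar\beta)^2\equiv 1\pmod{\qbb^1(\epsilon)}$. For \eqref{si1}, the relations $\bar\alpha^3=\bar\beta^3=(\bar\alpha\bar\beta)^2=1$ yield a surjection $A_4\cong\Delta_{(3,3,2)}\to\qbb^1/\qbb^1(\epsilon)$; Figure~\ref{sgoQ} shows the target has order $96/8=12=|A_4|$, so the surjection is an isomorphism.

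For \eqref{si0}, I must sharpen the computation of $(\bar\alpha\bar\beta)^2$ modulo $\qbb^1(\epsilon\beta')$. Substituting $\beta=\beta'+\alpha+1+\epsilon$ and using $\alpha^2=1+\alpha$ and $\epsilon^2=0$ in $\qbb$, one finds $\epsilon\,\bar\alpha\bar\beta = \epsilon + \epsilon\,\alpha\beta'$, hence
\[
(\bar\alpha\bar\beta)^2 = 1+\epsilon+\epsilon\,\alpha\beta' = (1+\epsilon)(1+\epsilon\,\alpha\beta').
\]
Since $1+\epsilon\,\alpha\beta'\in\qbb^1(\epsilon\beta')$, we conclude $(\bar\alpha\bar\beta)^2\equiv 1+\epsilon\pmod{\qbb^1(\epsilon\beta')}$. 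Because $\epsilon$ is central in $\qbb$, the class of $1+\epsilon$ is central in the quotient, so $[\bar\alpha,(\bar\alpha\bar\beta)^2]\equiv 1$ as well. Together with $\bar\alpha^3=\bar\beta^3=(\bar\alpha\bar\beta)^4=1$, these are precisely the defining relations of $\SL[2](\F_3)$ from \Rref{smallgp}, giving a surjection $\SL[2](\F_3)\to\qbb^1/\qbb^1(\epsilon\beta')$; since Figure~\ref{sgoQ} gives $|\qbb^1/\qbb^1(\epsilon\beta')|=96/4=24=|\SL[2](\F_3)|$, this is also an isomorphism.

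The main obstacle is the bookkeeping inside $\qbb$: one must pin down $(\bar\alpha\bar\beta)^2$ to the exact precision required by each quotient, and separately justify (via strong approximation) that $\bar\alpha,\bar\beta$ generate all of $\qbb^1$, so that the surjections from the presented groups reach the full quotient rather than a proper subgroup.
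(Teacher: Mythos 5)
Your overall strategy coincides with the paper's: verify the triangle-group relations of \Rref{smallgp} for the images of $\alpha,\beta$, compute $(\alpha\beta)^2=1+\epsilon+\epsilon\alpha\beta'$ in $\qbb$, and compare against the orders $24$ and $12$ read off from Figure~\ref{sgoQ}. The one place where you genuinely depart from the paper is the generation step, and that is where there is a gap. You assert that strong approximation makes the reduction $\qb^1\to\qbb^1$ surjective, so that the images of $\alpha,\beta$ generate $\qbb^1$. What strong approximation (via Eichler's condition) actually gives is that $\qb^1$ is dense in the product of the local norm-one groups, hence that the image of $\qb^1$ in $\qbb=\qb/2\qb$ equals the image of the norm-one group of the completed order at the ramified dyadic prime $(\sqrt{2})$ (note $2O_K=(\sqrt{2})^2$). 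Whether that image is all of $\qbb^1$ is a separate local lifting statement: every unit of the finite ring $\qbb$ of norm $1$ must lift to an element of reduced norm exactly $1$ in the completion, equivalently the reduced norm must map the congruence subgroup $1+2\,\qb\otimes_{O_K}\Z_2[\sqrt{2}]$ onto $1+2\Z_2[\sqrt{2}]$. This is true (it follows from surjectivity of the reduced trace of the local maximal order together with successive approximation), but it is not formal at a dyadic ramified prime, and it is exactly the point the paper flags in the proof of \Pref{si2}, where the reduction map is said ``a priori need not be onto'' the norm-one group of the quotient; in the paper that surjectivity is deduced \emph{from} \Pref{someiso}, not used to prove it.

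As written, then, your surjections from $\SL(\F_3)$ and from $A_4$ are only known to land onto the subgroup generated by the images of $\alpha,\beta$, and without generation the order counts $24$ and $12$ do not finish the argument. Two repairs are available: either supply the dyadic norm-lifting argument sketched above to justify the strong-approximation step, or prove generation directly, as the paper does, from computations you essentially already have: $\alpha$ generates $\qbb^1/\qbb^1(\beta')\cong\Z/3\Z$; the images $1+\alpha\beta'$ and $1+(1+\alpha)\beta'$ of $\alpha\beta$ and $\beta\alpha$ generate $\qbb^1(\beta')/\qbb^1(\epsilon)\cong\Z/2\Z\times\Z/2\Z$; and $(\alpha\beta)^2\equiv 1+\epsilon$ generates $\qbb^1(\epsilon)/\qbb^1(\epsilon\beta')\cong\Z/2\Z$, so the image meets every layer of the filtration and has order $24$. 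With either repair, the remainder of your argument --- the factorization $(\alpha\beta)^2=(1+\epsilon)(1+\epsilon\alpha\beta')$ with $1+\epsilon\alpha\beta'\in\qbb^1(\epsilon\beta')$, the centrality of $1+\epsilon$, and the order comparisons --- is correct and matches the paper.
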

\begin{proof}
%\begin{enumerate}
%\item
The elements $\alpha, \beta \in \qb^1$, which satisfy $\alpha^3 = \beta^3 = {{-1}}$, map to their images $\alpha,\beta \in \qbb^1$. In $\qbb^1$ we have the relations $\alpha^3 = \beta^3 = 1$ (noting that ${{-1}} =  1$ in $\qbb = \qb / 2\qb$), and also, by computation, $(\alpha\beta)^2 = 1+\epsilon+ \epsilon \alpha \beta'$. Passing to the quotient $\qbb^1/\qbb^1(\epsilon\beta')$, we have that
    $$\alpha^3 = \beta^3 = (\alpha\beta)^4 = [\alpha,(\alpha\beta)^2] = [\beta,(\alpha\beta)^2] = 1$$
    since in this quotient $(\alpha\beta)^2 = 1+\epsilon$, which is central of order $2$. By \Rref{smallgp}, the group with this presentation is $\SL[2](\F_3)$, of order $24$. %  since modulo $(\alpha\beta)^2$ it becomes the triangle group $\Delta_{3,3,2}$, which is a well known presentation of $A_4$.
    To complete the proof, it remains to show that the image of $\sg{\alpha,\beta}$ in $\qbb^1/\qbb^1(\epsilon\beta')$ has order $24$. This can be done by computing in each quotient separately: \begin{itemize}\item $\alpha$ generates $\qbb^1/\qbb^1(\beta') \cong \Z/3\Z$; \item $\alpha\beta = 1+\epsilon\alpha+\alpha\beta' \equiv 1+\alpha\beta' \pmod{\qbb^1(\epsilon)}$, and $\beta\alpha = 1+\epsilon\alpha + (1+\alpha)\beta' \equiv 1 +(1+\alpha)\beta'$, which together generate $\qbb^1(\beta')/\qbb^1(\epsilon)$, isomorphic to $\Z/2\Z \times \Z/2\Z$;
    \item and $(\alpha\beta)^2$ generates $\qbb^1(\epsilon)/\qbb^1(\epsilon\beta') \cong \Z/2\Z$ as we have seen.
    \end{itemize}
%\item
As for the second isomorphism, passing with the previous item to the quotient $\qbb^1/\qbb^1(\epsilon)$, we have that
    $$\alpha^3 = \beta^3 = (\alpha\beta)^2 = 1,$$
    so we get the triangle group $\Delta_{(3,3,2)} \cong A_4$.
(An explicit isomorphism is obtained by $\alpha \mapsto (123)$ and $\beta \mapsto (124)$).
 % This also works but I did not check for consistency: we obtain the isomorphism $(12)(34) \mapsto 1 + \beta'$ and $(123) \mapsto \alpha$. % 1+\beta',\alpha: checked.
%\end{enumerate}
\end{proof}

\begin{remark}
The following quotients decompose as direct products:
$$\qbb^\times / \qbb^\times(\epsilon\beta') \,\cong\, \SL(\F_3) \times (\Z/2\Z),$$
$$\qbb^\times / \qbb^1(\epsilon) \,\cong\, A_4 \times (\Z/2\Z).$$
\end{remark}
\begin{proof}
The element $1+\epsilon \alpha$, which has order $2$, is not in $\qbb^1$ because it has norm $1+\epsilon$. To prove the first isomorphism, it suffices to note that $1+\epsilon \alpha$ commutes with the generators $\alpha$, $\beta = 1+\epsilon+\alpha+\beta'$ of $\qbb^1/\qbb^1(\epsilon\beta')$, because
\[
(1+\epsilon\alpha)(1+\alpha+\epsilon+\beta')(1+\epsilon\alpha)^{{{-1}}} =
1+\alpha+\epsilon+\beta'+\epsilon\beta' \equiv
1+\alpha+\epsilon+\beta'.
\]

The second isomorphism follows by taking the first one modulo
$\qbb^1(\epsilon)$, giving the quotient $\qbb^1/\qbb^1(\epsilon)$,
which is isomorphic to $A_4$ by \Pref{someiso}.%
\end{proof}

\begin{remark}\label{exp2}
The group $\qbb^\times(\epsilon)$ is isomorphic to $(\Z/2\Z)^4$.
\end{remark}
\begin{proof}
By definition, $\qbb^\times(\epsilon) = 1+\F_2[\alpha]\epsilon+\F_2[\alpha]\beta'\epsilon$ has order $16$. But for every $f \in \qbb$, $(1+f\epsilon)^2 = 1+2f\epsilon+f^2\epsilon^2 = 1$. This shows that the group has exponent $2$, so it is abelian.
\end{proof}

\section{The Bolza group as a congruence subgroup}\label{sec:13}

Our goal is to compare the Fuchsian group $B$, corresponding to the
Bolza surface, to congruence subgroups of $\qb^1$ modulo $\set{\pm
1}$.  To simplify notation, we write
\[
\sqb[] = \qb^1/\set{\pm 1}
\]
and
\begin{equation}
\label{signdef}
\sqb[I] = \sg{{{-1}},\qb^1(I)}/\set{\pm 1}
%
%\sqb[I] = \qb^1(I)/\set{\pm 1}
%
\end{equation}
for any ideal $I \normali \Z[\sqrt{2}]$.

By \Lref{112}, the group $B \sub \sqb[]$ is generated, as a normal
subgroup, by the element $\delta = (\alpha\beta)^2(\alpha^2\beta^2)^2
= 1+\sqrt{2} (1 + (1+\sqrt{2})(\alpha - \beta))$.

\begin{prop}\label{si2}
The map $\qb^1/\qb^1(\sqrt{2}) \ra \qbb^1/\qbb^1(\epsilon)$, induced
by the projection $\qb \ra \qbb$, is an isomorphism.
\end{prop}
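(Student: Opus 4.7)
The plan is to check that the map is well-defined, injective, and surjective, in that order. The main technical input is the computation of $\qbb^1/\qbb^1(\epsilon)$ already carried out in \Pref{someiso}; the rest is essentially bookkeeping about the relation between the ideals $\sqrt{2}\qb$ and $2\qb$.

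First I would note that the ring map $\pi \co \qb \to \qb/2\qb = \qbb$ carries $\qb^1$ into $\qbb^1$ (since reduction modulo $2$ preserves the reduced norm), and carries $1+\sqrt{2}\qb$ into $1+\epsilon\qbb$ (since $\pi(\sqrt{2}) = \epsilon$). Consequently $\pi$ sends $\qb^1(\sqrt{2})$ into $\qbb^1(\epsilon)$, so it induces a well-defined homomorphism
\[
\bar{\pi} \co \qb^1/\qb^1(\sqrt{2}) \lra \qbb^1/\qbb^1(\epsilon).
\]

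For injectivity, I would identify the kernel of the composition $\qb^1 \ra \qbb^1 \ra \qbb^1/\qbb^1(\epsilon)$. If $x \in \qb^1$ maps into $\qbb^1(\epsilon) = 1+\epsilon\qbb$, then $x - 1 \in \sqrt{2}\qb + 2\qb$; but since $2\qb \subseteq \sqrt{2}\qb$, this simplifies to $x - 1 \in \sqrt{2}\qb$, so $x \in \qb^1(\sqrt{2})$. This shows the kernel is exactly $\qb^1(\sqrt{2})$, so $\bar{\pi}$ is injective.

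For surjectivity, I would use the fact, established in \Tref{intro3} (proven in \Sref{sec:7}), that $\qb^1/\set{\pm 1}$ is generated by $\alpha$ and $\beta$, so $\qb^1$ itself is generated by $-1, \alpha, \beta$. Since $-1 \equiv 1 \pmod{2\qb}$, the image of $\qb^1$ in $\qbb^1$ is the subgroup generated by the images of $\alpha$ and $\beta$. By the argument at the end of the proof of \Pref{someiso} (equation \eqref{si1}), these images already generate $\qbb^1/\qbb^1(\epsilon) \cong A_4$. Hence $\bar\pi$ is surjective and therefore an isomorphism. None of the steps is a true obstacle; the only thing worth double-checking is the inclusion $2\qb \subseteq \sqrt{2}\qb$ used in the injectivity step, which is immediate from $2 = \sqrt{2}\cdot\sqrt{2}$.
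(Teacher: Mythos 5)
Your proposal is correct and follows essentially the same route as the paper: injectivity comes from identifying the kernel of $\qb^1\ra\qbb^1/\qbb^1(\epsilon)$ with $\qb^1(\sqrt{2})$ (using $2\qb\subseteq\sqrt{2}\qb$), and surjectivity comes from the observation in \Pref{someiso} that the images of $\alpha,\beta\in\qb^1$ generate $\qbb^1/\qbb^1(\epsilon)\cong A_4$. Your extra appeal to $\qb^1=\sg{\alpha,\beta}$ is harmless but not needed, since it suffices that $\alpha,\beta$ lie in $\qb^1$.
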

\begin{proof}
The projection modulo $\sqrt{2}$ provides an injection
$$\qb^1/\qb^1(\sqrt{2}) \ra \qbb^\times/\qbb^1(\epsilon),$$ which a priori need not be onto $\qbb^1/\qbb^1(\epsilon)$, even taking into account that every element of $(\qb/2\qb)^{\times}$ has norm $1$. But in \Pref{someiso} we observed that the images of $\alpha,\beta \in \qb^1$ generate $\qbb^1/\qbb^1(\epsilon)$.
%  In \Sref{sec:7} we observed that $\alpha$ and $\beta =1+\alpha+\beta'$ (equality modulo $\epsilon$) are in $\qb^1$. But in $\qbbb$ we have that $(1+\beta')\alpha^2(1+\beta')^{{{-1}}} = 1+\alpha+\beta' = \beta$, and we have just seen that the images of $\alpha$ and $\beta$, namely $(123)$ and $(12)(34)(123)^2(12)(34) = (124)$, generate the whole group.
\end{proof}

\begin{thm}\label{main1}
The Bolza group $B$ satisfies $\sqb[2] \,\subset\, B \,\subset\, \sqb[\sqrt{2}]$, and
$$\sqb/B \cong \qbb^1/\qbb^1(\epsilon\beta').$$
\end{thm}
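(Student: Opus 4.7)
The plan is to construct a surjective homomorphism $\pi \colon \sqb \to \qbb^1/\qbb^1(\epsilon\beta')$ whose kernel is exactly $B$; the claimed inclusions and the isomorphism will then follow from the tower $\qbb^1(\epsilon\beta') \subset \qbb^1(\epsilon) \subset \qbb^1$ of kernels.

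First I would define $\pi$ by composing the reduction map $\qb^1 \to \qbb^1$ with the projection $\qbb^1 \to \qbb^1/\qbb^1(\epsilon\beta')$. Since $-1 \equiv 1 \pmod{2\qb}$, this descends to $\sqb = \qb^1/\set{\pm 1}$. Surjectivity is already implicit in the proof of \Pref{someiso}, where the images of $\alpha, \beta \in \qb^1$ were shown to generate $\qbb^1/\qbb^1(\epsilon\beta') \cong \SL[2](\F_3)$.

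Next I would verify $B \subset \ker(\pi)$. By \Cref{114}, $B$ is the normal closure in $\sqb$ of
\[
\delta = (\alpha\beta)^2(\alpha^2\beta^2)^2 = 1+\sqrt{2}\bigl(1+(1+\sqrt{2})(\alpha-\beta)\bigr),
\]
so it suffices to check that the image of $\delta$ in $\qbb$ lies in $1+\epsilon\beta'\qbb$. Reducing modulo $2\qb$ (so that $\sqrt{2}\mapsto \epsilon$, $-1\mapsto 1$, and $\epsilon^2=0$) yields $\delta \equiv 1+\epsilon+\epsilon(\alpha+\beta)$. Substituting $\beta = \beta'+\alpha+1+\epsilon$ from \Sref{sec:12} gives $\alpha+\beta \equiv \beta'+1+\epsilon$, and therefore $\delta \equiv 1 + \epsilon + \epsilon(\beta'+1+\epsilon) = 1+\epsilon\beta' \pmod{2\qb}$, placing $\delta$ in $\qbb^1(\epsilon\beta')$ as required.

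The conclusion is an index count. By \Cref{114}, $[\sqb\co B] = 24$, and by \Pref{someiso}, $|\qbb^1/\qbb^1(\epsilon\beta')| = |\SL[2](\F_3)| = 24$. Surjectivity of $\pi$ combined with $B \subset \ker(\pi)$ forces $\ker(\pi) = B$, which gives the isomorphism $\sqb/B \cong \qbb^1/\qbb^1(\epsilon\beta')$. The containment $\sqb[2] \subset B$ then holds because $\sqb[2]$ is the kernel of $\sqb \to \qbb^1$ (noting that $-1 \in \qb^1(2)$), and this coarser map factors through $\pi$. The containment $B \subset \sqb[\sqrt{2}]$ holds because $\pi$ further factors through $\sqb \to \qbb^1/\qbb^1(\epsilon)$, which by \Pref{si2} is identified with the quotient $\sqb/\sqb[\sqrt{2}]$, so $B = \ker(\pi)$ is contained in the kernel $\sqb[\sqrt{2}]$ of this coarser map. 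The main obstacle is the modular computation of $\delta$ in the second step: without pinpointing the exact level in the tower $\qbb^1 \supset \qbb^1(\epsilon) \supset \qbb^1(\epsilon\beta')$ at which $\delta$ becomes trivial, the index argument alone cannot distinguish $B$ from a larger candidate such as $\sqb[\sqrt{2}]$.
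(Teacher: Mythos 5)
Correct, and essentially the paper's own argument: both proofs reduce modulo $2\qb$, verify that $\delta=(\alpha\beta)^2(\alpha^2\beta^2)^2$ maps to $1+\epsilon\beta'$, and then play the index $\dimcol{\sqb}{B}=24$ from \Pref{102} against the index $24$ of the level-$\epsilon\beta'$ congruence subgroup in the norm-one group of $\qb/2\qb$ from \Pref{someiso}; the paper organizes this via the chain $\sqb[2]\sub B\sqb[2]\sub\sqb[\sqrt{2}]\sub\sqb$, while you package the same computation as identifying $B$ with the kernel of a single surjection $\pi$. One wording slip only: $\pi$ factors through the reduction $\sqb\to(\qb/2\qb)^1$, and the map to the level-$\epsilon$ quotient factors through $\pi$ (not the other way around, as written), but the inclusions $\sqb[2]\sub B\sub\sqb[\sqrt{2}]$ you deduce from the resulting kernel comparisons are exactly right.
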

\begin{proof}
Noting that ${{-1}} \in \qb^1(2)$, we investigate the chain of groups
$$\sqb[2] \,\sub\, B\sqb[2] \,\sub\, \sqb[\sqrt{2}] \,\sub\, \sqb.$$

Let $\phi \co \qb^1 \ra \qbb^1$ be the map induced by the projection $\qb \ra \qbb = \qb/2\qb$. This homomorphism, whose kernel is $\qb^1(2)$, is well defined on $\sqb = \qb^1/\set{\pm 1}$, since ${{-1}} \in \qb^1(2)$. Furthermore, $\phi$ carries $\sqb$
onto $\qbb^1$, and the subgroup $\sqb[\sqrt{2}]$ onto $\qbb^1(\epsilon)$, by \Pref{si2}.

At the same time, because $\phi(\delta) = 1+\epsilon\beta' \in \qbb^{1}(\epsilon\beta')$, the normal subgroup it generates is mapped into $\qbb^1(\epsilon\beta')$. This proves that
$$\dimcol{\sqb}{B \cdot \sqb[2]} = \dimcol{\qbb^1}{\qbb^1(\epsilon\beta')} = 24.$$
But since $\sqb$ is isomorphic to $\Delta_{(3,3,4)}$, we have by \Pref{102}
that
\[
\dimcol{\sqb}{B} = 24
\]
as well. This proves that $B = B \sqb[2]$, so that $\sqb[2] \,\sub\, B$. It follows that the injection of $\qb^1/\qb^1(2)$  into $\qbb^1$ sends $\delta$ to $1+\epsilon\beta'$, and the normal subgroup $B$ generated by the former, to the normal subgroup $\qbb^1(\epsilon\beta')$ generated by the latter.
\end{proof}

\def\Sym{{\operatorname{Sym}}}

Let $\Sym_{(3,3,4)}(B)$ denote the quotient $\sqb/B$, which is the group of orientation preserving symmetries of the Bolza surface stemming from the $(3,3,4)$ tiling. % Where is $\Sym_{(2,3,8)(B)$ in all this?
\begin{cor}
% \footnote{We have to be a bit careful here because literally the automorphism group is somewhat larger than $Q^1/B$ because the Bolza surface is actually in the (2,3,8) family, so there is supposed to be an extra automorphism that's invisible from the viewpoint of the quaternionic structure.}
%
The symmetry group $\Sym_{(3,3,4)}(B)$ is isomorphic to $\SL[2](\F_3)$.
\end{cor}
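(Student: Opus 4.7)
The plan is to simply chain together two results already established in the excerpt. Theorem~\ref{main1} identifies the quotient $\sqb/B$ with $\qbb^1/\qbb^1(\epsilon\beta')$, while the first isomorphism of \Pref{someiso} identifies the latter with $\SL[2](\F_3)$. Composing these two isomorphisms gives the desired conclusion.

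More concretely, I would write: by \Tref{main1} we have $\Sym_{(3,3,4)}(B) = \sqb/B \cong \qbb^1/\qbb^1(\epsilon\beta')$, and by the isomorphism \eqref{si0} of \Pref{someiso} the latter quotient is isomorphic to $\SL[2](\F_3)$. Combining these completes the proof.

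There is essentially no obstacle here, since all the substantive work has already been done: the computation of the quotient ring $\qbb = \qb/2\qb$ and its subgroup lattice in Section~\ref{sec:12}, the identification of $\delta = (\alpha\beta)^2(\alpha^2\beta^2)^2$ as representing the normal generator of $B$ (from \Lref{112} and \Cref{114}), and the presentation-theoretic argument in \Pref{someiso} identifying $\qbb^1/\qbb^1(\epsilon\beta')$ via the relations $\alpha^3=\beta^3=(\alpha\beta)^4 = [\alpha,(\alpha\beta)^2]=1$ with $\SL[2](\F_3)$ (using \Rref{smallgp}). So the corollary is purely a formal consequence, and the proof is a one-line composition of named isomorphisms.
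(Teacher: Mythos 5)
Your proposal matches the paper's own proof: the corollary is obtained exactly by combining \Tref{main1} (which gives $\sqb/B \cong \qbb^1/\qbb^1(\epsilon\beta')$) with the isomorphism \eqref{si0} of \Pref{someiso}. Nothing further is needed.
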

\begin{proof}
Indeed, the automorphism group $\sqb/B \cong \qbb^1/\qbb^1(\epsilon\beta')$ was computed in \Pref{someiso}.\eq{si0}.
\end{proof}

Let us add this result to the observations made in \Sref{sec:238}, where we embedded
$$\Delta_{(3,3,4)} = \sg{\alpha, \beta \,|\, \alpha^3 = \beta^3 =
(\alpha\beta)^4 = 1}$$ as a subgroup of index $2$ in
$$\Delta_{(2,3,8)} = \sg{x, y \,|\, x^2 = y^3 = (xy)^8 = 1}$$
via the map $\alpha \mapsto y$ and $\beta \mapsto xyx$.
Since $B \sub \Delta_{(3,3,4)}$ is the normal subgroup generated by $(\alpha \beta)^2(\alpha^2\beta^2)^{-2}$ by \Lref{112}, its image in $\Delta_{(2,3,8)}$ is $\sg{(y x)^4(y^{{{-1}}}x)^4}^{\sg{y,xyx}}$, which happens to be normal in $\Delta_{(2,3,8)}$, and the quotient group is
$$\sg{x, y \,|\, x^2 = y^3 = (xy)^8 =(y x)^4(y^{{{-1}}}x)^4 = 1}.$$
This quotient is isomorphic to $\GL[2](\F_3)$ by taking $x \mapsto \smat{0}{1}{1}{0}$ and $y \mapsto \smat{1}{1}{0}{1}$.
\begin{cor}\label{133}
The symmetry group $\Sym_{(2,3,8)}(B) = \Delta_{(2,3,8)}/B$ is isomorphic to $\GL[2](\F_3)$.
\end{cor}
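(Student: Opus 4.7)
My plan has three steps: show that $B$, already known to be normal in $\Delta_{(3,3,4)}$, is actually normal in $\Delta_{(2,3,8)}$; read off a presentation of the quotient; identify it with $\GL[2](\F_3)$ via the matrix map indicated in the excerpt. For normality, conjugation by $x$ permutes the generators $y = \alpha$ and $xyx = \beta$ of $\Delta_{(3,3,4)}$, so $\Delta_{(3,3,4)}$ is normal in $\Delta_{(2,3,8)}$ and it suffices to check that $x\delta x^{-1}\in B$ for the normal generator $\delta = (yx)^4(y^{-1}x)^4$ of $B$. Using $x^2 = 1$ one gets $x(yx)^4 x = (xy)^4$ and $x(y^{-1}x)^4 x = (xy^{-1})^4$, and both $(xy)^8 = 1$ and its consequence $(yx)^8 = 1$ (since $yx$ is conjugate to $xy$) yield the identities $(xy)^4 = (y^{-1}x)^4$ and $(xy^{-1})^4 = (yx)^4$. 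These give
\[
x\delta x^{-1} \,=\, (y^{-1}x)^4(yx)^4 \,=\, a\,\delta\,a^{-1},\qquad a=(yx)^4\in\Delta_{(3,3,4)},
\]
which lies in $B$. Applying this to an arbitrary conjugate $h\delta^{\pm 1}h^{-1}$ with $h\in\Delta_{(3,3,4)}$ yields $xBx^{-1}\subseteq B$, and hence normality.

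Once normality is established, the quotient inherits the presentation
\[
\Delta_{(2,3,8)}/B \,=\, \sg{x,y \,|\, x^2 = y^3 = (xy)^8 = (yx)^4(y^{-1}x)^4 = 1},
\]
whose order is $[\Delta_{(2,3,8)}:\Delta_{(3,3,4)}]\cdot[\Delta_{(3,3,4)}:B] = 2\cdot 24 = 48$ by \Lref{kerh} and \Pref{102}. The matrix substitution $\phi \co x\mapsto \smat{0}{1}{1}{0}$, $y\mapsto \smat{1}{1}{0}{1}$ respects each relator by a brief calculation modulo $3$: in particular $\phi(xy)^4 = \phi(yx)^4 = \phi(y^{-1}x)^4 = -I$, whence $\phi(xy)^8 = I$ and $\phi(\delta) = I$. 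Surjectivity follows because $\phi(y)$ together with $\phi(xyx) = \smat{1}{0}{1}{1}$ are the Cayley generators of $\SL[2](\F_3)$, while $\det\phi(x) = -1$ supplies a coset of determinant $-1$. Since domain and target both have order $48$, $\phi$ is an isomorphism.

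The main obstacle is the normality step, where the four coincidences relating the length-$4$ words $(xy)^{\pm 4}$, $(yx)^{\pm 4}$, $(xy^{-1})^{\pm 4}$, $(y^{-1}x)^{\pm 4}$ forced by $(xy)^8 = 1$ must be tracked carefully. Once those are in hand, both the induced presentation of the quotient and the verification that $\phi$ realises it as $\GL[2](\F_3)$ reduce to short direct computations.
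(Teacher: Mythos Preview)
Your proof is correct and follows the same route as the paper: it establishes the presentation of $\Delta_{(2,3,8)}/B$ and identifies it with $\GL[2](\F_3)$ via the same matrix assignment $x\mapsto\smat{0}{1}{1}{0}$, $y\mapsto\smat{1}{1}{0}{1}$. The only difference is that you supply an explicit verification of normality where the paper merely asserts that the image of $B$ ``happens to be normal'' in $\Delta_{(2,3,8)}$.
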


We can also compute the quotient of $B$ modulo the principal congruence subgroup it contains:

\begin{remark}
We have that $B/\sqb[2] \,\cong\, \Z/2\Z \times \Z/2\Z$.
\end{remark}
\begin{proof}
Indeed, $\qbb^1(\epsilon\beta')$ has order $4$, and as a subgroup of $\qbb^{\times}(\epsilon)$, which is of exponent $2$ by \Pref{exp2}, we obtain
$$B/\sqb[2] \,\cong\, \qbb^1(\epsilon\beta') \,\cong\, \Z/2\Z\times \Z/2\Z,$$
as claimed.
\end{proof}

\begin{cor}
$\sqb[\sqrt{2}]$ is generated by $B$ and the torsion element $\varpi$
of~\eqref{varpidef}.
\end{cor}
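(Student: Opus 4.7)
The plan is to compute the index $[\sqb[\sqrt{2}]:B]$ and then exhibit $\varpi$ as a non-trivial coset representative.

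First I would show that $[\sqb[\sqrt{2}]:B] = 2$. The index $[\sqb:B] = 24$ is already in hand from \Cref{114}. For the complementary index, note that since $\sqrt{2}$ divides $2$ in $O_K$ we have $-1 \in \qb^1(\sqrt{2})$, so the definition \eqref{signdef} collapses to $\sqb[\sqrt{2}] = \qb^1(\sqrt{2})/\set{\pm 1}$ and consequently $[\sqb:\sqb[\sqrt{2}]] = [\qb^1:\qb^1(\sqrt{2})]$. By \Pref{si2} this quotient is isomorphic to $\qbb^1/\qbb^1(\epsilon)$, which by the isomorphism \eqref{si1} of \Pref{someiso} is isomorphic to $A_4$, of order $12$. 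Dividing yields the claimed index $2$.

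Next I would argue that $\varpi \in \sqb[\sqrt{2}] \setminus B$. Membership in $\sqb[\sqrt{2}]$ is immediate from the defining formula $\varpi = 1+\sqrt{2}\alpha\beta$ in \eqref{varpidef}. For the non-membership, the cleanest route is to invoke the fact that $B$ is the fundamental group of a smooth compact Riemann surface (the Bolza surface), hence acts freely on $\HH^2$ and is torsion-free; by the computation of \Sref{nine} the element $\varpi$ has order $2$ in $\sqb$, so it cannot lie in $B$. As an alternative verification, one can apply the homomorphism $\phi \co \qb^1 \ra \qbb^1$ of the proof of \Tref{main1} and check by direct computation modulo $2$ that $\phi(\varpi) = 1+\epsilon+\epsilon\alpha\beta'$ does not lie in $\qbb^1(\epsilon\beta')$, which by that theorem is the image of $B$.

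Combining, the subgroup $\sg{B,\varpi} \sub \sqb[\sqrt{2}]$ properly contains $B$, while $B$ has index $2$ in $\sqb[\sqrt{2}]$; the only possibility is $\sg{B,\varpi} = \sqb[\sqrt{2}]$. There is essentially no obstacle here since all the heavy lifting (the index computations and the identification of $B$ with a specific normal subgroup of $\qb^1$ modulo $\set{\pm 1}$) has been done in Sections \ref{sec:12}--\ref{sec:13}; the mildly delicate point is the torsion-freeness of $B$, which is standard but worth stating explicitly.
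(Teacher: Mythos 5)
Your proof is correct and follows essentially the same route as the paper: the paper also rules out $\varpi \in B$ by the torsion-freeness of $B$ and then concludes from an index count forcing $\dimcol{\sqb[\sqrt{2}]}{B}=2$. The only cosmetic difference is how that index is obtained — the paper quotes $\dimcol{\qb^1(\sqrt{2})}{\qb^1(2)}=8$ together with $\dimcol{B}{\sqb[2]}=4$ from the preceding remark, whereas you divide $\dimcol{\sqb}{B}=24$ by $\dimcol{\sqb}{\sqb[\sqrt{2}]}=12$ via Propositions~\ref{si2} and~\ref{someiso}; both computations rest on the same results of Sections~\ref{sec:12}--\ref{sec:13}.
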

\begin{proof}
As we have seen before, $B$ is torsion free, so $\varpi \not \in B$,
and $\sg{B,\varpi}$ strictly contains $B$, so the result follows from
$\dimcol{\qb^1(\sqrt{2})}{\qb^1(2)} = 8$.
\end{proof}

%The congruence subgroups of quaternion algebras play an important role
%in the local Langlands correspondence; see e.g., Harris \&
%Taylor~\cite{HT}.

\section{Computations in the Bolza twins}
\label{twins}

In this section and the ones that follow, we will present some
explicit computations with the ``twin'' surfaces corresponding to the
algebraic primes factoring rational primes in $K = \Q(\sqrt{2})$.  Recall that $O_K = \Z [\sqrt{2}]$.  We
first state a result on quotients co-prime to $6$, which follows from
the definition of ramification (and splitting) in a quaternion
algebra.

\begin{lemma} \label{lem:matrixlemma}
Let $I \normali O_K$ be a prime ideal. If $2$ and $3$ are invertible
modulo~$I$, then $\qb/I\qb \cong \M_2(O_K/I)$.
\end{lemma}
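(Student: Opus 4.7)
The plan is to reduce modulo $I$ from the Bolza order $\qb$ to the standard order $\O = O_K[i,j]$ of \Sref{sec:5}, and then to appeal to Wedderburn's little theorem on finite division rings.

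First, a direct check using \eq{a_re}, \eq{b_re}, and \eq{45b} shows that $2\alpha$, $6\beta$, and $6\alpha\beta$ all lie in $\O$, so the finitely generated $O_K$-module $\qb/\O$ is annihilated by $6$. Because $2$ and $3$ are both invertible modulo $I$, so is $6$, and therefore $\qb/\O$ vanishes after localisation at the prime $\frak p = I$. This gives $\O_\frak p = \qb_\frak p$ inside $D_B$, and reducing modulo the maximal ideal yields a natural isomorphism $\O/I\O \cong \qb/I\qb$.

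Next, since $\O$ is free over $O_K$ with basis $\{1,i,j,ij\}$, the quotient $\O/I\O$ is the $4$-dimensional $(O_K/I)$-algebra generated by the images of $i,j$ subject to the relations $i^2 = -3$, $j^2 = \sqrt{2}$, and $ji = -ij$. The residue field $\F := O_K/I$ is finite because $I$ is a nonzero prime of the Dedekind domain $O_K$, and the hypotheses ensure that $-3$ and $\sqrt{2}$ are units in $\F$ (the latter because $2 \not\in I$ forces $\sqrt{2} \not\in I$). Consequently $\O/I\O$ is a central simple quaternion algebra of dimension $4$ over the finite field $\F$.

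Finally, by Wedderburn's little theorem there are no noncommutative finite division rings, and hence no $4$-dimensional central division algebras over $\F$ at all. A central simple $\F$-algebra of dimension $4$ must therefore be isomorphic to $\M_2(\F)$, so $\O/I\O \cong \M_2(\F)$. Combining with the first step gives $\qb/I\qb \cong \M_2(O_K/I)$. The only non-trivial computation is the containment $6\qb \subseteq \O$, which follows at once from the explicit formulas already recorded; no substantial obstacle is anticipated.
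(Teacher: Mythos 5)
Your argument is correct, and it ends exactly where the paper's proof does: $\qb/I\qb$ is identified with the symbol algebra $(-3,\sqrt{2})$ over the finite residue field $O_K/I$, and Wedderburn's little theorem then forces it to be $\M_2(O_K/I)$ rather than a division algebra. The route to that identification is organized differently. The paper works entirely inside the quotient $\qb/I\qb$, substituting $\alpha'=\alpha-\tfrac12$ and $\beta'=\beta-\tfrac{1+2\sqrt{2}}{3}\alpha'-\tfrac12$ (legitimate because $2$ and $3$ are invertible mod $I$); these are precisely the images of $\tfrac{i}{2}$ and $-\tfrac{ij}{3}$, and one checks they anticommute with invertible squares $-\tfrac34$ and $\tfrac{\sqrt{2}}{3}$, giving the standard presentation directly. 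You instead compare $\qb$ with the standard order $\O=O_K[i,j]$ of \Sref{sec:5}: since $2\alpha$, $6\beta$, $6\alpha\beta\in\O$ by \eq{a_re}, \eq{b_re}, \eq{45b}, the module $\qb/\O$ is killed by $6$, so localizing at $I$ (where $6$ is a unit) gives $\O_{\frak p}=\qb_{\frak p}$ and hence $\O/I\O\cong\qb/I\qb$, after which $\O/I\O$ is visibly the quaternion algebra $(-3,\sqrt{2})$ over $O_K/I$. Your version buys a cleaner conceptual statement (the maximal order and the standard order have the same reduction at primes away from $6$) at the cost of a small localization/order-comparison step; the paper's substitution is a self-contained computation in the quotient, and in substance the two identifications coincide. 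You also correctly flag the points that make the symbol presentation central simple: $-3$ and $\sqrt{2}$ are units mod $I$ (the latter since $2\notin I$ and $I$ is prime) and the residue characteristic is not $2$.
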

\begin{proof}
It is convenient to make the substitution
\[
\alpha = \alpha'+\frac{1}{2},\quad \beta =
\beta'+\frac{1+2\sqrt{2}}{3}\alpha'+\frac{1}{2}.
\]
Then $\alpha'$ and $\beta'$ anticommute, and we obtain a standard
presentation
\[
\qb/I\qb = (O_K/I)\left[\alpha',\beta' \,\left|\,
\alpha'^2=-\tfrac{3}{4},\ \beta'^2=\tfrac{\sqrt{2}}{3},\
\beta'\alpha' = -\alpha'\beta'\right.\right].
\]
Since $\alpha'^2$ and $\beta'^2$ are invertible, the
quotient~$\qb/I\qb$ is a quaternion algebra over~$O_K/I$.  But as a
finite integral domain, $O_K/I$ is a field, so by Wedderburn's little
theorem (that the only finite division algebras are fields),
$\qb/I\qb$ is necessarily isomorphic to $\M_2(O_K/I)$.
\end{proof}

%\subsection{Groups}

% We are interested in subgroups of $\qb^1/\set{\pm 1}$. Since the ideals in this section are odd, we must amend \eq{signdef}, and define \begin{equation}\label{signdef2} \sqb[I] = \sg{{{-1}}, \qb^1(I)}/\set{\pm 1}. \end{equation}

%We computed above that $\qb/I\qb \cong \M_2(O_K/I)$ for every prime ideal $I \normali \Z[\sqrt{2}]$ which is prime to $2,3$.  Hence
%\[
%\qb^1/\qb^1(I) \cong \SL(O_K/I)
%\]
\begin{corollary} \label{cor:primefactorcor}
Let $I \normali O_K$ be a prime ideal such that $6$ is invertible modulo $I$.  Then
\[
\sqb / \sqb(I) \cong \PSL(O_K / I).
\]
\end{corollary}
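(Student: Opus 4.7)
The plan is to combine \Lref{lem:matrixlemma} with a strong approximation argument to identify the image of the norm-$1$ group, and then pass to the projective quotient.

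First I would use \Lref{lem:matrixlemma} to fix an explicit isomorphism $\varphi\co \qb/I\qb \;\xrightarrow{\sim}\; \M_2(O_K/I)$. Under any such isomorphism the reduced norm on $\qb/I\qb$ matches the determinant on $\M_2(O_K/I)$, since both are the unique degree-$2$ multiplicative form of the appropriate discriminant, and both agree with the reduced norm coming from $D_B$ after localisation. Consequently $\varphi$ restricts to a homomorphism
\[
\pi_I\co \qb^1 \;\lra\; \SL[2](O_K/I)
\]
whose kernel is, by definition, $\qb^1(I)$.

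The heart of the argument is to show $\pi_I$ is surjective. This is where I expect the main technical work to sit. Because $D_B$ ramifies only at the archimedean place $\sigma$ and at the prime $(\sqrt{2})$ (by \Pref{42a}), it satisfies Eichler's condition, so strong approximation for the algebraic group $\mathrm{SL}_1(D_B)$ applies. Since $I$ is a prime of $O_K$ coprime to $6$ (hence unramified in $D_B$ and corresponding to a place where $\qb$ is a maximal order with a matrix algebra localisation), strong approximation guarantees that $\qb^1$ is dense in $\mathrm{SL}_1(\qb_I) \cong \SL[2](O_{K,I})$, and in particular surjects onto its reduction modulo $I$, namely $\SL[2](O_K/I)$. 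This gives the isomorphism
\[
\qb^1/\qb^1(I) \;\cong\; \SL[2](O_K/I).
\]

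Finally I would pass to the projective quotient. By definition $\sqb = \qb^1/\set{\pm 1}$ and $\sqb(I) = \sg{{{-1}},\qb^1(I)}/\set{\pm 1}$, so
\[
\sqb/\sqb(I) \;=\; \qb^1/\sg{{{-1}},\qb^1(I)} \;\cong\; \SL[2](O_K/I)/\set{\pm 1} \;=\; \PSL[2](O_K/I),
\]
where the middle isomorphism is induced by $\pi_I$, using that $\pi_I({{-1}}) = {{-1}}$ in $\SL[2](O_K/I)$ (which is nontrivial since $2$ is invertible mod $I$). The only subtlety to check is that adjoining ${{-1}}$ on the source side matches exactly adjoining $\set{\pm 1}$ on the target side; this is immediate because $\pi_I$ is surjective and ${{-1}} \in \qb^1$ maps to ${{-1}} \in \SL[2](O_K/I)$.
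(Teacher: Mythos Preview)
Your proof is correct and follows exactly the same route as the paper: invoke \Lref{lem:matrixlemma} to identify $\qb/I\qb$ with a matrix ring, use strong approximation (available because $D_B$ satisfies Eichler's condition) to conclude $\qb^1/\qb^1(I)\cong\SL[2](O_K/I)$, and then pass to the projective quotient. The paper compresses all of this into a single sentence, whereas you have spelled out the details carefully, including the identification of reduced norm with determinant and the verification that adjoining $-1$ on each side matches up.
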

\begin{proof}
By Lemma~\ref{lem:matrixlemma} and strong approximation we have $\qb^1/\qb^1(I) \cong \SL(O_K/I)$.
\end{proof}

%We therefore obtain the following corollary.
%\begin{corollary}
%We have
%\[
%\qb^1/\sg{{{-1}},\qb^1(I)}\cong \PSL(O_K/I)
%%
%% By $\qb^{\pm}(I)$ I mean the subgroup generated by $\qb^{1}(I)$ and $\pm 1$.
%% \qb^{\pm}/\sqb[I] \cong \PSL(O_K/I) % By $\qb^{\pm}(I)$ I mean the subgroup generated by $\qb^{1}(I)$ and $\pm 1$.
%\]
%for every ideal $I$ such that $2$ and $3$ are invertible modulo $I$.
%\end{corollary}

\begin{lemma} \label{lem:normalsubgroups}
Let $p$ be a rational prime that splits in $K$, so that $p O_K = I_1 I_2$ for distinct prime ideals $I_1, I_2 \normali O_K$.  There are exactly two normal subgroups $H \normali \sqb$ such that $\sqb / H \cong \PSL(\mathbb{F}_p)$, namely $\sqb(I_1)$ and $\sqb(I_2)$.
\end{lemma}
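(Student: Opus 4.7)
The existence and distinctness of the two claimed subgroups is immediate from \Cref{cor:primefactorcor} applied separately to each $I_k$, together with the canonical identification $O_K/I_k\cong\F_p$ coming from $p$ being split. That $\sqb(I_1)\neq\sqb(I_2)$ follows by exhibiting, via \Lref{lem:matrixlemma} and the Chinese Remainder Theorem, an element of $\qb^1$ that reduces to the identity modulo $I_1$ but to a non-central element of $\SL(\F_p)$ modulo $I_2$.

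For uniqueness, my plan is to identify the finite quotient $\sqb/\sqb(p)$ explicitly and then classify its normal subgroups with $\PSL(\F_p)$-quotient. Applying \Lref{lem:matrixlemma} to each $I_k$ and using the CRT decomposition $O_K/pO_K\cong\F_p\times\F_p$ yields $\qb/p\qb\cong\M_2(\F_p)\times\M_2(\F_p)$. Since $p$ splits in $K$, necessarily $p\neq 2,3$, so $p$ does not divide the discriminant $(\sqrt{2})$ of $D_B$; strong approximation for $\qb^1$ then applies (as $D_B$ is split at both primes above $p$) to give
\[
\sqb/\sqb(p)\;\cong\;\bigl(\SL(\F_p)\times\SL(\F_p)\bigr)\big/\sg{(-I,-I)},
\]
in which $\sqb(I_1)/\sqb(p)$ and $\sqb(I_2)/\sqb(p)$ are the images of $\SL(\F_p)\times\set{\pm I}$ and $\set{\pm I}\times\SL(\F_p)$, respectively.

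Since $\PSL(\F_p)$ is simple for $p\ge 5$ and $\SL(\F_p)$ is a perfect central extension with centre $\set{\pm I}$, a short Goursat-type analysis shows that every normal subgroup of $\SL(\F_p)\times\SL(\F_p)$ takes product form $N_1\times N_2$ with $N_i\in\set{\set{1},\set{\pm I},\SL(\F_p)}$, together with the central diagonal $\sg{(-I,-I)}$. Among those descending to $\sqb/\sqb(p)$, a direct index count isolates exactly two with quotient isomorphic to $\PSL(\F_p)$, namely the images of $\SL(\F_p)\times\set{\pm I}$ and $\set{\pm I}\times\SL(\F_p)$; these pull back to $\sqb(I_1)$ and $\sqb(I_2)$.

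The main obstacle is to show that an arbitrary normal subgroup $H\normali\sqb$ with $\sqb/H\cong\PSL(\F_p)$ contains $\sqb(p)$, so that the classification above applies. My plan is to consider the joint map $\sqb\to\PSL(\F_p)^3$ built from the three quotient maps through $H$, $\sqb(I_1)$, and $\sqb(I_2)$, and apply Goursat's lemma to its image $\Lambda$. Using simplicity of $\PSL(\F_p)$ together with the existence part (which already ensures the projection of $\Lambda$ onto the last two factors is all of $\PSL(\F_p)^2$), either $\Lambda$ is a ``graph'' forcing $H=\sqb(I_k)$ for some $k\in\set{1,2}$, or $\Lambda=\PSL(\F_p)^3$. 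The latter case would produce a surjection $\sqb\to\PSL(\F_p)$ independent of both reductions; ruling it out is the delicate step, and uses the arithmetic nature of $\sqb$: every absolutely irreducible two-dimensional representation of $D_B$ over $\F_p$ factors through a quotient $D_B\otimes O_K/\mathfrak{p}\cong\M_2(\F_p)$ for a prime $\mathfrak{p}\mid p$ of $O_K$, of which only $I_1$ and $I_2$ are available.
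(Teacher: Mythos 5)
The existence half of your argument (via \Cref{cor:primefactorcor}) is fine, but the uniqueness half has a genuine gap, and it sits exactly at the step you yourself flag as delicate. Your Goursat reduction is correct as far as it goes: either $H=\sqb(I_1)$, or $H=\sqb(I_2)$, or the joint map $\sqb\to\PSL(\F_p)^3$ is surjective. But the fact you propose for excluding the last case --- that every absolutely irreducible two-dimensional representation of $D_B$ in characteristic $p$ factors through $\qb/I_1\qb$ or $\qb/I_2\qb$ --- does not apply as stated, because the surjection $\varphi:\sqb\to\PSL(\F_p)$ with kernel $H$ is merely an abstract group homomorphism from the $(3,3,4)$ triangle group. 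Nothing in your sketch relates $\varphi$ to a representation of $D_B$, or even of the order $\qb$; establishing such a relation is equivalent to showing $H\supseteq\sqb(p)$, which is precisely the assertion to be proved. So the Goursat step only reformulates the problem, and the decisive input is missing.

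What is needed is a bridge from the group surjection to the arithmetic of $\qb$. The paper supplies it as follows: one first shows that $\varphi(\alpha)$, $\varphi(\beta)$, $\varphi(\alpha\beta)$ have orders exactly $3,3,4$ (trivial images force an abelian quotient, and $\varphi(\alpha\beta)$ of order $2$ would force a dihedral image), so the triple $(\varphi(\alpha),\varphi(\beta),\varphi(\alpha\beta)^{-1})$ is non-exceptional and its trace triple is non-commutative, hence projective by Macbeath \cite[Theorem 4]{Ma67}; then \cite[Proposition 8.10]{CV15} bounds the number of normal subgroups with quotient $\PSL(\F_p)$ by two, and the two congruence kernels already account for them. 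Alternatively, your outline could be repaired by hand along the lines you gesture at: after the same order analysis, choose lifts $A,B\in\SL(\F_p)$ of $\varphi(\alpha),\varphi(\beta)$ with $\tr A=\tr B=1$; since $AB$ lifts an element of order $4$ it has order $8$, so $\tr(AB)^2=2$, and the identity $BA=A+B+(\tr(AB)-1)-AB$ in $\M_2(\F_p)$ matches the presentation of $\qb$ in \Tref{43b} with $\sqrt{2}\mapsto-\tr(AB)$; this produces an $O_K$-algebra map $\qb\to\M_2(\F_p)$ inducing $\varphi$ and killing $I_k\qb$ for one of $k=1,2$, whence $H=\sqb(I_k)$. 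Either way, this material must be supplied; note also that the paper's proof is structured differently from your plan --- it never passes through $\sqb/\sqb(p)$ or Goursat, but works directly with the triple and the Clark--Voight result.
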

\begin{proof}
Recall that the rational primes $p$ splitting in $K$ are precisely those satisfying $p \equiv \pm 1 \mbox{  } \mathrm{mod}\mbox{ } 8$.  Let $H \normali \sqb$ be a normal subgroup such that $\sqb / H \cong \PSL(\mathbb{F}_p)$ and choose a surjection $\varphi: \sqb \to \PSL(\mathbb{F}_p)$ such that $\ker (\varphi) = H$.  Clearly $\varphi$ is determined by the triple $(\varphi(\alpha), \varphi(\beta), \varphi(\alpha \beta)^{{{-1}}}) \in (\PSL(\mathbb{F}_p))^3$, where we use $\alpha, \beta$ to denote the images of these elements in $\sqb$.  Note that $\varphi(\alpha)$ must have order $3$; otherwise it would be trivial and $\varphi(\sqb)$ would be abelian, contradicting surjectivity.  Similarly, $\varphi(\beta)$ has order $3$.  Since $\alpha \beta$ has order $4$ in $\sqb$, the order of $\varphi(\alpha \beta)$ must divide $4$.  If $\varphi(\alpha \beta)$ is trivial, then again we get that $\varphi(\sqb)$ is abelian.  If $\varphi(\alpha \beta)^2$ is trivial, then it is easy to show that $\langle\varphi(\alpha \beta), \varphi(\beta \alpha) \rangle \subseteq \PSL(\mathbb{F}_p)$ is a normal subgroup and hence all of $\PSL(\mathbb{F}_p)$ since the latter is a simple group.  However, this is again absurd because a finite group generated by two involutions must be dihedral.  Thus the order of $\varphi(\alpha \beta)$ is $4$.

We have thus shown that $(\varphi(\alpha), \varphi(\beta), \varphi(\alpha \beta)^{{{-1}}})$ is a non-exceptional group triple in the sense of~\cite[Section 8]{CV15}.  Moreover, since none of these three elements of $\PSL(\mathbb{F}_p)$ can be a scalar matrix, it follows that if $(g_1, g_2, g_3) \in (\PSL(\mathbb{F}_p))^3$ is any triple such that $g_1 g_2 g_3 = 1$ and $(\tr (g_1), \tr(g_2), \tr(g_3)) = (\tr \varphi(\alpha), \tr \varphi(\beta), \tr \varphi(\beta^{{{-1}}} \alpha^{{{-1}}}))$, then the orders of $g_1, g_2, g_3$ are $3, 3, 4$, respectively.  In particular, the subgroup $\langle g_1, g_2, g_3 \rangle$ is never abelian.  Hence the trace triple
\[
(\tr \varphi(\alpha), \tr \varphi(\beta), \tr \varphi(\beta^{{{-1}}} \alpha^{{{-1}}}))
\]
is not commutative, and so it must be projective by~\cite[Theorem 4]{Ma67}.
Since $p \not\in \{ 2, 3 \}$, all the hypotheses of~\cite[Proposition 8.10]{CV15} hold.  By that proposition, there are at most two normal subgroups $H$ such that $\sqb / H \cong \PSL(\mathbb{F}_p)$.  On the other hand, $\sqb(I_1)$ and $\sqb(I_2)$ are clearly distinct and satisfy this condition by Corollary~\ref{cor:primefactorcor}.  We are grateful to J.~Voight for directing us to the reference~\cite{CV15}.
\end{proof}

\begin{remark}
For every $p$ splitting in $K$, we obtain a pair of Bolza twin surfaces $M$ of
genus~$g(M)=\frac{p(p^2 - 1)}{48} + 1$, i.e., Euler characteristic
$\chi(M)=-\frac{p(p^2 - 1)}{24}$, and area $\frac{\pi p(p^2 {{-1}})}{12}$.
Since the area of the (3,3,4) triangle is $\frac{\pi}{12}$ and its
double is $\frac{\pi}{6}$, the automorphism group generated by
orientation-preserving elements of the triangle group has order
$\frac{p(p^2 {{-1}})}{2}$, namely that of $\PSL(\F_p)$.
\end{remark}

\section{Bolza twins of genus $8$}

We now apply the results of Section~\ref{twins} to the pair
$(1+2\sqrt{2})O_K$ and $(1-2\sqrt{2})O_K$.  Both of these have
norm~$7$, and therefore their principal congruence quotients are
isomorphic to the group $\PSL(\F_7)$ of order 168.  These ideals give
rise to twin surfaces analogous to the Hurwitz triplets (see
\cite{KSV2}), namely non-isometric surfaces with the same automorphism
group.  Note that by estimate~\eqref{81c}, these Fuchsian groups
contain no elliptic elements.

The normal subgroup of the triangle group generated by each of these
in \magma{} produces a presentation with 16 generators and a single
relation of length 32, corresponding to Fuchsian groups of a Riemann
surface of genus~$8$.  Therefore it coincides with the corresponding
congruence subgroup, since it gives the correct order of the symmetry
group (i.e., index in the (3,3,4) triangle group), namely order 168.

To find these groups, we searched for subgroups of index 168 using
\magma, and looked for the simplest generator whose normal closure is
the entire group.  The results are summarized in Lemmas~\ref{l151} and
\ref{l152} below.

The numerical values reproduced below suggest that the systole of the
surface corresponding to the ideal $(1+2\sqrt{2})O_K$ should be
smaller than the systole of the surface corresponding to the
ideal~$(1-2\sqrt{2})O_K$.

\begin{lemma}
\label{l151}
The element $-(\alpha\beta^{{{-1}}})^4$ is in $\qb^1(1+2\sqrt{2})$. %
Its normal closure is the full congruence subgroup corresponding to
the ideal generated by $1+2\sqrt{2}$.
\end{lemma}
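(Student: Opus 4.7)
The plan is to handle the two assertions separately. For the first, I would check that $-(\alpha\beta^{-1})^4 \equiv 1 \pmod{(1+2\sqrt{2})\qb}$ by matrix reduction. Since $\Norm(1+2\sqrt{2}) = 7$ is coprime to $6$, \Lref{lem:matrixlemma} gives an explicit isomorphism $\qb/(1+2\sqrt{2})\qb \cong \M_2(\F_7)$ under which $\alpha,\beta \in \qb^1$ become elements of $\SL(\F_7)$ of order $6$ (since $\alpha^3 = \beta^3 = {-1}$ in $\qb^1$ and $-I \neq I$ in $\SL(\F_7)$). A short matrix computation then shows that the image of $(\alpha\beta^{-1})^4$ is $-I$, so that $-(\alpha\beta^{-1})^4$ reduces to the identity and lies in $\qb^1(1+2\sqrt{2})$.

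For the normal-closure assertion, let $N$ be the normal closure of $-(\alpha\beta^{-1})^4$ in $\qb^1$. By the first part, $N \subseteq \qb^1(1+2\sqrt{2})$. Since $-1$ maps to $-I \neq I$ in $\SL(\F_7)$, we have $\qb^1(1+2\sqrt{2}) \cap \set{\pm 1} = \set{1}$, so the natural projection $\qb^1 \ra \sqb$ restricts to an isomorphism of $\qb^1(1+2\sqrt{2})$ onto $\sqb(1+2\sqrt{2})$, and by \Cref{cor:primefactorcor} this image has index $|\PSL(\F_7)| = 168$ in $\sqb \cong \Delta_{(3,3,4)}$. It therefore suffices to show that the normal closure $\bar N$ of $(\alpha\beta^{-1})^4$ in $\sqb$ has index exactly $168$. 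Using the triangle-group presentation of $\sqb$, the quotient $\sqb/\bar N$ is the finitely presented group
$$\sg{\alpha,\beta \,|\, \alpha^3 = \beta^3 = (\alpha\beta)^4 = (\alpha\beta^{-1})^4 = 1},$$
and \Cref{cor:primefactorcor} supplies a surjection of this group onto $\PSL(\F_7)$, so its order is at least $168$.

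The remaining, and main, obstacle is to prove that this surjection is an isomorphism, i.e.\ that the group displayed above has order \emph{at most} $168$. This is not amenable to a short hand argument; the natural approach is Todd--Coxeter coset enumeration applied to the presentation, which terminates with exactly $168$ cosets. This is the computational ingredient the authors allude to when they describe searching via \magma{} for the simplest generator whose normal closure is the full congruence subgroup. A more structural substitute would be to exhibit an explicit action of the abstract quotient on the projective line over $\F_7$ and verify directly that the resulting homomorphism to $\PSL(\F_7)$ is faithful, but in practice the coset enumeration is both cleaner and already carried out in \magma.
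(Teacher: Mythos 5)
Your proposal is correct and follows essentially the same route as the paper: check by computation that $(\alpha\beta^{-1})^4\equiv -1$ modulo $(1+2\sqrt{2})$, then combine the index-$168$ of the congruence subgroup (via $\PSL(\F_7)$, Corollary~\ref{cor:primefactorcor}) with the coset-enumeration fact that $\sg{\alpha,\beta \mid \alpha^3=\beta^3=(\alpha\beta)^4=(\alpha\beta^{-1})^4=1}$ has order $168$. The only real difference is in the first step: the paper expands $(\alpha\beta^{-1})^4=(5+3\sqrt{2})(\alpha-\alpha\beta)-(2+2\sqrt{2})$ in the module basis $\{1,\alpha,\beta,\alpha\beta\}$ and reduces the coefficients modulo the ideal (using $\sqrt{2}\equiv 3 \pmod 7$), which is more direct than your route, since Lemma~\ref{lem:matrixlemma} only asserts the existence of an isomorphism $\qb/(1+2\sqrt{2})\qb\cong\M_2(\F_7)$ and your ``short matrix computation'' would first require constructing that splitting explicitly.
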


\begin{proof}
With respect to the module basis we have
\[
(\alpha\beta^{{{-1}}})^4=(5+3\sqrt{2})(\alpha-\alpha\beta)-(2+2\sqrt{2}),
\]
which is congruent to ${{-1}}$ modulo the ideal $(1+2\sqrt{2})$. On the other hand,
$$\sg{\alpha, \beta \,|\, \alpha^3 = \beta^3 = (\alpha\beta)^4 = (\alpha\beta^{{{-1}}})^4 = 1}$$
has order $168$, showing that the normal closure of $(\alpha\beta^{{{-1}}})^4$ is the full congruence subgroup.
%
%% Note: -(\alpha\beta^{{{-1}}})^4 =  (-7/2-2\sqrt{2})  + (1/2+\sqrt{2})/3*i + (5/2+3\sqrt{2}/2) * j + (-5/6-\sqrt{2}/2) * ij.
%%  which explains why the trace is $2x_0=-(7+4\sqrt{2})$.
%%  also $2(x_0{{-1}}) = -(9+4\sqrt{2}) = -(1+\sqrt(2))^2$!
\end{proof}

\begin{remark}
\label{t1}
The element $(\alpha\beta^{{{-1}}})^4$ has trace
$7+4\sqrt{2}=12.656\ldots{}$ Of the 16 generators produced by
\magma{}, 14 have this trace (up to sign), and the remaining two
generators have trace $19 + 13\sqrt{2}=37.384\ldots$ (up to sign).
The smaller value $7+4\sqrt{2}= 12.656\ldots$ is a good candidate for
the least trace of a nontrivial element for this Fuchsian group.
\end{remark}

\begin{lemma}
\label{l152}
The element $-(\beta^{{{-1}}}\alpha^{{{-1}}}\beta^{{{-1}}}\alpha\beta\alpha)^2$ is
in $\qb^1(1-2\sqrt{2})$.  Its normal closure is the full congruence
subgroup corresponding to the ideal generated by $1-2\sqrt{2}$.
\end{lemma}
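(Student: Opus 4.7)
The approach parallels \Lref{l151} exactly. First, I would expand the word $(\beta^{-1}\alpha^{-1}\beta^{-1}\alpha\beta\alpha)^2$ in the $O_K$-basis $\{1,\alpha,\beta,\alpha\beta\}$ of $\qb$ furnished by \Tref{43b}. Using $\alpha^3 = \beta^3 = -1$ to rewrite $\alpha^{-1} = 1-\alpha$ and $\beta^{-1} = 1-\beta$, together with the commutation rule $\beta\alpha = (-1-\sqrt{2}) + \alpha+\beta-\alpha\beta$, one obtains an explicit expression of the form $a_0 + a_1\alpha + a_2\beta + a_3\alpha\beta$ with $a_0 \equiv -1 \pmod{(1-2\sqrt{2})}$ and $a_1,a_2,a_3 \in (1-2\sqrt{2})O_K$. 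This shows that $-(\beta^{-1}\alpha^{-1}\beta^{-1}\alpha\beta\alpha)^2 \in \qb^1(1-2\sqrt{2})$.

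For the second assertion, the key input is \Cref{cor:primefactorcor}: since $\Norm((1-2\sqrt{2})O_K) = 7$, with both $2$ and $3$ invertible modulo this prime, we have $\sqb / \sqb[1-2\sqrt{2}] \cong \PSL(\F_7)$, a group of order $168$. I would then verify, exactly as in \Lref{l151}, that the finitely presented group
\[
G = \sg{\alpha, \beta \,|\, \alpha^3 = \beta^3 = (\alpha\beta)^4 = (\beta^{-1}\alpha^{-1}\beta^{-1}\alpha\beta\alpha)^2 = 1}
\]
has order exactly $168$; this can be checked by coset enumeration in \magma. Since $G$ surjects onto $\sqb/\sqb[1-2\sqrt{2}]$ and both groups have order $168$, the surjection is an isomorphism, so the normal closure in $\sqb$ of $(\beta^{-1}\alpha^{-1}\beta^{-1}\alpha\beta\alpha)^2$ coincides with $\sqb[1-2\sqrt{2}]$, i.e.\ with the full congruence subgroup corresponding to $(1-2\sqrt{2})$.

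The only genuinely delicate step is the \magma\ verification that $|G|=168$: \emph{a priori} $G$ could be larger, in which case the normal closure would sit properly inside the congruence subgroup. One might hope to derive this lemma from \Lref{l151} via a Galois symmetry exchanging the two primes $(1\pm 2\sqrt{2})$ above $7$, but the Galois twist of $D_B$ is ramified at the opposite archimedean place of $K$ and is not isomorphic to $D_B$, so the two primes must be handled independently. In practice the presentation is compact enough that the coset enumeration terminates essentially immediately.
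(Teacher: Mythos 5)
Your proposal is correct and follows essentially the same route as the paper: the paper likewise expands $(\beta^{-1}\alpha^{-1}\beta^{-1}\alpha\beta\alpha)^2$ in the basis $\{1,\alpha,\beta,\alpha\beta\}$, checks that it is congruent to $-1$ modulo $(1-2\sqrt{2})$, and then concludes by verifying (in \magma) that the group $\sg{\alpha,\beta \mid \alpha^3=\beta^3=(\alpha\beta)^4=(\beta^{-1}\alpha^{-1}\beta^{-1}\alpha\beta\alpha)^2=1}$ has order $168$, matching $\abs{\PSL[2](\F_7)}$. The only difference is presentational: you spell out the comparison with $\sqb/\sqb[1-2\sqrt{2}]\cong\PSL[2](\F_7)$ via \Cref{cor:primefactorcor}, which the paper leaves implicit.
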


\begin{proof}
A calculation shows that
\[
(\beta^{{{-1}}}\alpha^{{{-1}}}\beta^{{{-1}}}\alpha\beta\alpha)^2=
(7+5\sqrt{2})+(5+ 4\sqrt{2}) \alpha - (8 + 5\sqrt{2}) \beta
+(3+\sqrt{2})\alpha\beta.
\]
Adding $1$, the coefficients $8+5\sqrt{2}$, $5+4\sqrt{2}$ and $3+\sqrt{2}$ are divisible by $1-2\sqrt{2}$,
% The quotients are $4+3\sqrt{2}$, $3+2\sqrt{2}$ and $1+\sqrt{2}$.
so $(\beta^{{{-1}}}\alpha^{{{-1}}}\beta^{{{-1}}}\alpha\beta\alpha)^2$ is congruent to ${{-1}}$ modulo $1-2\sqrt{2}$ in the Bolza order.
Again, the normal closure is the full congruence subgroup because the group
$$\sg{\alpha, \beta \,|\, \alpha^3 = \beta^3 = (\alpha\beta)^4 = (\beta^{{{-1}}}\alpha^{{{-1}}}\beta^{{{-1}}}\alpha\beta\alpha)^2 = 1}$$
has order $168$ as well.
\end{proof}

\begin{remark}\label{t2}
The trace of $(\beta^{{{-1}}}\alpha^{{{-1}}}\beta^{{{-1}}}\alpha\beta\alpha)^2$ is
$9+6\sqrt{2}=17.485\ldots{}$ Of the 16 generators of the Fuchsian
group produced by \magma, 13 have this trace (up to sign), and the
remaining three have trace $14+11\sqrt{2}=29.556\ldots{}$ The smaller
value $9+6\sqrt{2}=17.485\ldots$ is a good candidate for the least
trace of a nontrivial element for this Fuchsian group.
\end{remark}

The traces in Remarks~\ref{t1} and~\ref{t2} can be compared to the
trace bound of \cite[Theorem~2.3]{KSV06a}, cf. \eq{81c}, which, since
$\qb \sub \frac{1}{6}O_K[i,j]$, gives for any ideal $I \normali
\Z[\sqrt{2}]$ and any $\pm 1 \neq x \in \qb^1(I)$
that~$\abs{\Tr_D(x)}> \frac{1}{4}\Norm(I)^2 - 2$.  In particular since
$\Norm(1+2\sqrt{2}) = \Norm(1-2\sqrt{2}) = 7$, we have for both
congruence subgroups mentioned in this section the trace lower bound
$\frac{41}{4} =10.25$.  Note that the trace appearing in
Remark~\ref{t1} exceeds the theoretical bound by less than 25\%.

\begin{remark}
It would be interesting to explore possible algorithms for the
computation of the systole of an explicitly given Fuchsian group,
possibly exploiting its fundamental domain using Voight \cite{Vo}.
\end{remark}

\section{Bolza twins of higher genus}
\label{s16}

In this section we collect explicit computations, performed in
\texttt{magma}, of Bolza twins for some primes $p > 7$ that split in
$K = \Q(\sqrt{2})$.  We briefly sketch the method.  Let $I_1$ and
$I_2$ be the two places of $K$ dividing~$p$.  We first obtain
presentations of the congruence subgroups $\sqb(I_1)$ and $\sqb(I_2)$.
By Lemma~\ref{lem:normalsubgroups}, these are the only two normal
subgroups of $\sqb$ such that the corresponding quotients are
isomorphic to $\PSL(\F_p)$.

The most efficient way to find such subgroups in practice is randomly
to generate a homomorphism from the triangle group onto $\PSL(\F_p)$.
Thus, we generate pairs $(A_1, A_2)$ of random elements of $\SL(\F_p)$
by means of the Product Replacement Algorithm and search for pairs
that generate $\SL(\F_p)$ and such that the projective orders of $A_1,
A_2, A_1 A_2$ are $3, 3, 4$, respectively.  Each such
pair corresponds to a surjection $\varphi: \sqb \to \PSL(\F_p)$ determined by $\varphi(\alpha) = \overline{A_1}$ and $\varphi(\beta) = \overline{A_2}$; here the bars denote images in $\PSL(\F_p)$.  We
search for two pairs such that the kernels of the corresponding
surjections are distinct; by Lemma~\ref{lem:normalsubgroups}, these
kernels are our two congruence subgroups.

This random search is far faster than any known deterministic algorithm.  Finding suitable pairs $(A_1, A_2)$ is very quick: for $p = 71$, for instance, a search through one million random pairs produced twenty suitable ones and took only a few seconds.

We then rewrite the presentations of these kernels by means of the
Reidemeister-Schreier algorithm, as implemented in \texttt{magma}; this is time-consuming, taking a few hours to run on a MacBook for $p = 71$.  It may be necessary to treat more than two surjections $\varphi$ before two different kernels are found.

In all cases that we have investigated, the Reidemeister-Schreier algortihm produces presentations
with $2 g_p$ generators and a single relation of length $4 g_p$; here
$g_p = p(p^2 - 1)/48 + 1$.  We search through this list for elements of minimal trace and for generators whose normal closure in $\sqb$ is the full
congruence subgroup and present our results below.  In some cases, none of the elements of minimal trace normally generate the entire congruence subgroup, and for one of the primes dividing $71$ we were unable to find any single element that normally generates the associated congruence subgroup.

\subsection{Bolza twins of genus $103$}

Factoring the rational prime $p=17$ as $-(1-3\sqrt{2})(1+3\sqrt{2})$,
we obtain a pair of Bolza twins of genus $103$, with automorphism
group $\PSL(\F_{17})$.  The order of $\PSL(\F_p)$ is $(p^2 - 1) p /2$.
This is 2448 for $p = 17$.  The element
\[
\alpha \beta\alpha^{{{-1}}} \beta^{{{-1}}}\alpha^{{{-1}}} \beta\alpha
\beta^{{{-1}}}\alpha \beta^{{{-1}}}\alpha^{{{-1}}} \beta^{{{-1}}}\alpha^{{{-1}}}
\beta\alpha \beta\alpha^{{{-1}}} \beta^{{{-1}}}\alpha \beta^{{{-1}}}\alpha \beta
\]
is congruent to 1 modulo $(1 - 3\sqrt{2})$ and normally generates the
corresponding congruence subgroup.  Its trace
is~$75+53\sqrt{2}\approx149.953\ldots$, which is the least trace (in
absolute value) among the $206$ generators (with a single relation of
length 412).  For the ``twin'' normal subgroup, we find a generator of
the form
\[
(\alpha \beta^{{{-1}}} \alpha^{{{-1}}} \beta^{{{-1}}} \alpha \beta \alpha^{{{-1}}} \beta^{{{-1}}} \alpha \beta)^2,
\]
equal to ${{-1}}$ mod $(1 + 3 \sqrt{2})$.  It generates the full
congruence subgroup, and gives the least trace,
namely~$79+56\sqrt{2}\approx158.195\ldots$, among all the generators.

\subsection{Bolza twins of genus $254$}

For $p=23$, there are two normal subgroups of the triangle group whose
quotient is $\PSL(\F_{23})$.  The order of $\PSL(\F_p)$
is~$(p^2{{-1}})p/2$.  This is 6072 for $p = 23$.  One obtains a generator
\[
\beta^{{{-1}}} \alpha \beta^{{{-1}}} \alpha^{{{-1}}} \beta
\alpha^{{{-1}}} \beta \alpha^{{{-1}}} \beta^{{{-1}}} \alpha
\beta^{{{-1}}} \alpha \beta \alpha ^{{{-1}}} \beta \alpha^{{{-1}}}
\beta \alpha
\]
with minimal trace of~$91+65\sqrt{2}$, whose normal closure is a group
with 508 generators and a single relation of length 1016.  This
generator is congruent to $+1$ modulo $5-\sqrt{2}$.

For its Bolza twin, the lowest trace appears to be~$119+84\sqrt{2}$.
An element that normally generates the congruence subgroup of
$5+\sqrt{2}$ is
\[
\alpha \beta \alpha \beta^{{{-1}}} \alpha \beta^{{{-1}}} \alpha
\beta^{{{-1}}} \alpha \beta \alpha^2 \beta^{{{-1}}} \alpha^{{{-1}}}
\beta \alpha \beta^{{{-1}}} \alpha \beta^{{{-1}}} \alpha
\beta^{{{-1}}}
\]
This generator is congruent to ${{-1}}$ modulo $5+\sqrt{2}$.

By Lemma~\ref{lem:normalsubgroups}, for each prime $p$ satisfying
$p\equiv\pm1 (\text{mod}\, 8)$, there are precisely two normal
subgroups of our triangle group with quotient isomorphic
to~$\PSL(\F_p)$, which are congruence subgroups corresponding to the
two algebraic primes factoring $p$.

\subsection{Bolza twins of genus 621}
Consider the decomposition $31=(9-5\sqrt{2})(9+5\sqrt{2})$.  The generator
\[
\beta  \alpha  \beta  \alpha^{{{-1}}}  \beta  \alpha  \beta^{{{-1}}}
\alpha  \beta^{{{-1}}}  \alpha  \beta  \alpha^{{{-1}}}  \beta^{{{-1}}}
\alpha^{{{-1}}}  \beta  \alpha^{{{-1}}}  \beta^{{{-1}}}  \alpha  \beta^{{{-1}}}
\alpha  \beta ^{{{-1}}}  \alpha ^{{{-1}}}
\]
is equivalent to $1$ modulo $9-5\sqrt{2}$ and normally generates the corresponding
principal congruence subgroup, producing a surface of genus $g=621 = 31(31^2-1)/48+1$.
This element has trace $153 + 109 \sqrt{2}$, which is the smallest
among the $2g$ generators.

For the Bolza twin, the element
\[
(\beta^{{{-1}}} \alpha \beta \alpha^{{{-1}}} \beta^{{{-1}}} \alpha \beta^{{{-1}}}
\alpha^{{{-1}}} \beta \alpha)^2
\]
equals ${{{-1}}}$ mod $(9 + 5 \sqrt{2})$, with normal closure with the
same properties, the least trace being $129 + 90 \sqrt{2}$.

\subsection{Bolza twins of genus 1436}
\label{s19}
Let $41=(7 - 2 \sqrt{2})(7 + 2 \sqrt{2})$.  For both $\sqb(7 - 2 \sqrt{2})$ and $\sqb(7 + 2 \sqrt{2})$, \texttt{magma} found presentations with $2g$ generators and a single relation of length $4g$, where $g = 1436 = 41 (41^2 - 1)/48 + 1$.  The generator
\[
\beta \alpha^{{{-1}}} \beta \alpha^{{{-1}}} \beta \alpha^{{{-1}}} \beta^{{{-1}}}
\alpha \beta^{{{-1}}} \alpha^{{{-1}}} \beta^{{{-1}}} \alpha \beta \alpha^{{{-1}}}
\beta \alpha^{{{-1}}} \beta^{{{-1}}} \alpha \beta \alpha \beta^{{{-1}}} \alpha
\beta \alpha^{{{-1}}}
\]
is congruent to ${{{-1}}}$ mod $(7 - 2 \sqrt{2})$ and has trace $208 \sqrt{2} + 295$.  Its normal closure is the full congruence subgroup $\sqb(7 - 2 \sqrt{2})$.

The pair of generators
\[
(\beta^{-1} \alpha)^{10}
\]
and
\[
\beta  \alpha  \beta  \alpha^{-1}  \beta  \alpha^{-1}  \beta^{-1}  \alpha  \beta^{-1}  \alpha^{-1}  \beta  \alpha  \beta^{-1}  \alpha^{-1}  \beta^{-1}  \alpha  \beta  \alpha^{-1}  \beta^{-1}  \alpha  \beta^{-1}  \alpha^{-1}  \beta  \alpha^{-1}
\]
are congruent to $-1$ and $1$ mod $(7 + 2 \sqrt{2})$, respectively, and they have traces $281 + 198 \sqrt{2}$ and $-(281 + 198 \sqrt{2})$, respectively.  The normal closure of this pair is the congruence subgroup $\sqb(7 + 2 \sqrt{2})$.  While $281 + 198 \sqrt{2}$ is the minimal trace among the $2g = 2872$ generators of $\sqb(7 + 2 \sqrt{2})$ found by \texttt{magma}, no single generator of this trace normally generates $\sqb(7 + 2 \
\sqrt{2})$.  It is, however, normally generated by the element
\[
\beta \alpha \beta \alpha^{{{-1}}} \beta \alpha^{{{-1}}} \beta \alpha^{{{-1}}}
\beta \alpha^{{{-1}}} \beta^{{{-1}}} \alpha \beta \alpha^{{{-1}}} \beta^{{{-1}}}
\alpha^{{{-1}}} \beta \alpha^{{{-1}}} \beta \alpha^{{{-1}}} \beta \alpha^{{{-1}}}
\beta \alpha \beta^{{{-1}}} \alpha^{{{-1}}},
\]
which is congruent to $1$ mod $(7 + 2 \sqrt{2})$ and has trace $681 + 481 \sqrt{2}$.

\subsection{Bolza twins of genus 2163}

Let $p = 47= (7 - \sqrt{2}) (7 + \sqrt{2})$.  The generator
\[
(\beta^{{{-1}}} \alpha \beta \alpha^{{{-1}}} \beta^{{{-1}}} \alpha \beta
\alpha^{{{-1}}} \beta^{{{-1}}} \alpha \beta^{{{-1}}} \alpha)^2
\]
has trace $529 + 374 \sqrt{2}$ and equals ${{{-1}}}$ mod $(7 - \sqrt{2})$,
while the generator
\[
\beta \alpha \beta \alpha^{{{-1}}} \beta \alpha \beta^{{{-1}}} \alpha^{{{-1}}}
\beta \alpha^{{{-1}}} \beta \alpha^{{{-1}}} \beta \alpha^{{{-1}}} \beta^{{{-1}}}
\alpha^{{{-1}}} \beta \alpha^{{{-1}}} \beta^{{{-1}}} \alpha \beta \alpha^{{{-1}}}
\beta \alpha^{{{-1}}} \beta \alpha^{{{-1}}}
\]
has trace $499 + 353 \sqrt{2}$ and equals $1$ mod $(7 + \sqrt{2})$.
In both cases, normal closures are subgroups with $2g$ generators and
a single relation of length $4g$, for $g = 2163$.

\subsection{Bolza twins of genus 7456}

Consider $p= 71=(11 + 5 \sqrt{2})(11 - 5 \sqrt{2})$.  The two elements
\[
 \beta \alpha \beta \alpha^{{{-1}}} \beta \alpha \beta^{{-1}} \alpha^{{-1}} \beta \alpha^{{-1}} \beta \alpha^{{-1}} \beta    \alpha \beta^{{-1}} \alpha^{{-1}} \beta^{{-1}} \alpha \beta \alpha \beta^{{-1}} \alpha^{{-1}}\beta \alpha^{{-1}} \beta\alpha^{{-1}}\beta\alpha^{{-1}}
\]
and
\[
\beta \alpha \beta \alpha^{{-1}} \beta \alpha^{{-1}} \beta \alpha
\beta^{{-1}} \alpha^{{-1}} \beta \alpha \beta^{{-1}} \alpha
\beta^{{-1}} \alpha^{{-1}} \beta^{{-1}} \alpha \beta \alpha^{{-1}} 
\beta \alpha \beta^{{-1}} \alpha^{{-1}} \beta \alpha \beta^{{-1}}
\alpha^{{-1}}
\]
are each congruent to $1$ modulo $(11 + 5 \sqrt{2})$, and they each have trace $-(951 + 672 \sqrt{2})$.  The normal closure of this pair of elements is the corresponding congruence subgroup $\sqb(11 + 5 \sqrt{2})$.  This congruence subgroup can also be generated by a single element; however, the minimal trace that we have found of such a generator is $2299 + 1625 \sqrt{2}$, for instance for the generator
\[
\beta \alpha \beta \alpha^{{{-1}}} \beta^{{{-1}}} \alpha \beta^{{{-1}}} \alpha
\beta \alpha^{{{-1}}} \beta \alpha^{{{-1}}} \beta \alpha^{{{-1}}} \beta
\alpha^{{{-1}}} \beta^{{{-1}}} \alpha^{{{-1}}} \beta \alpha \beta^{{{-1}}} \alpha
\beta^{{{-1}}} \alpha^{{{-1}}} \beta \alpha^{{{-1}}} \beta \alpha^{{{-1}}} \beta
\alpha^{{{-1}}}.
\]

The congruence subgroup has a presentation with $2g$ generators and a single relation of length $4g$, for
the expected $g = 7456 = 47(47^2 - 1)/48 + 1$.

For its Bolza twin, we were unable to find any element whose normal
closure is the entire congruence subgroup~$\sqb(11-5\sqrt{2})$.  This
congruence subgroup again has a presentation with $2g$ generators and
a single relation of length $4g$; however, the normal closure of each
of these generators have index at least $3$ in $\sqb(11 - 5\sqrt{2})$.
The smallest trace among the $2g = 14912$ generators is $\pm (633 +
449 \sqrt{2})$, which is obtained for eighteen of them.  We note that
\magma{} was unable the determine the index in $\sqb (11 - 5\sqrt{2})$
of the normal closure of all eighteen of these generators; this index
is likely to be very large or infinite.  However, the congruence
subgroup can be normally generated by the two elements
\[
\beta \alpha \beta \alpha^{-1} \beta \alpha \beta^{-1} \alpha^{-1}
\beta\alpha^{-1} \beta^{-1} \alpha^{-1} \beta \alpha^{-1} \beta
\alpha^{-1} \beta^{-1} \alpha^{-1} \beta \alpha^{-1} \beta^{-1} \alpha
\beta \alpha^{-1} \beta \alpha \beta \alpha^{-1} \beta \alpha^{-1}
\]
and
\[
\beta \alpha \beta \alpha^{-1} \beta \alpha^{-1} \beta \alpha^{-1}
\beta^{-1} \alpha \beta^{-1} \alpha^{-1} \beta^{-1} \alpha \beta
\alpha \beta^{-1} \alpha^{-1} \beta^{-1} \alpha \beta \alpha^{-1}
\beta^{-1} \alpha \beta \alpha \beta^{-1} \alpha \beta  \alpha^{-1}
\beta \alpha^{-1},
\]
which are congruent to $1$ modulo $(11 - 5 \sqrt{2})$ and have traces
$633 + 449 \sqrt{2}$ and the next smallest $-(1527 + 1080 \sqrt{2})$,
respectively.

\subsection{Summary of results}

To summarize, we collect some of the results presented above in a
table.  Each line of the table corresponds to a prime ideal $I
\normali O_K = \Z [ \sqrt{2}]$ dividing a rational prime $p$ that
splits in $K = \Q(\sqrt{2})$.  We present the lowest trace discovered
by our \texttt{magma} computations of a non-trivial element in the
congruence subgroup $\sqb(I)$, as well as the decimal expansion of
this candidate for the lowest trace, rounded to the nearest
thousandth.  For comparison, the rightmost column displays the
theoretical lower bound $N(I)^2 / 4 - 2$ for the trace.

For some ideals, such as $I = (11 - 5 \sqrt{2})$, we find elements
whose traces are remarkably close to the theoretical bound.  For other
ideals, our experimental results are not as close to the theoretical
bound; we ask whether elements of lower trace exist that could be
discovered by other methods.

\begin{center}
\noindent \begin{minipage}{\textwidth}
\begin{longtable}{| r | r || r | r || r |} \hline
$I$ & $N(I)$ & lowest trace & & $N(I)^2 / 4 - 2$ \\ \hline \hline
$(1 + 2\sqrt{2})$ & $7$ & $ 7 + 4\sqrt{2}$ & $12.657$ & $10.25$ \\ \hline
$(1 - 2 \sqrt{2})$ & $7$ & $9 + 6 \sqrt{2}$ & $17.485$ & $10.25$ \\ \hline
$(1 - 3 \sqrt{2})$ & $17$ & $75 + 53 \sqrt{2}$ & $149.953$ & $70.25$ \\ \hline
$(1 + 3 \sqrt{2})$ & $17$ & $79 + 56 \sqrt{2}$ & $158.196$ & $70.25$ \\ \hline
$(5 - \sqrt{2})$ & $23$ & $91 + 65 \sqrt{2}$ & $182.924$ & $130.25$ \\ \hline
$(5 + \sqrt{2})$ & $23$ & $119 + 84 \sqrt{2}$ & $237.794$ & $130.25$ \\ \hline
$(9 + 5 \sqrt{2})$ & $31$ & $129 + 90 \sqrt{2}$ & $256.279$ & $238.25$ \\ \hline
$(9 - 5 \sqrt{2})$ & $31$ & $153 + 109 \sqrt{2}$ & $307.149$ & $238.25$ \\ \hline
$(7 + 2 \sqrt{2})$ & $41$ & $281 + 198 \sqrt{2}$ & $561.014$ & $418.25$ \\ \hline
$(7 - 2 \sqrt{2})$ & $41$ & $295 + 208 \sqrt{2}$ & $589.156$ & $418.25$ \\ \hline
$(7 + \sqrt{2})$ & $47$ & $499 + 353 \sqrt{2}$ & $998.217$ & $550.25$ \\ \hline
$(7 - \sqrt{2})$ & $47$ & $529 + 374 \sqrt{2}$ & $1057.916$ & $550.25$ \\ \hline
$(11 - 5 \sqrt{2})$ & $71$ & $633 + 449 \sqrt{2}$ & $1267.982$ & $1258.25$ \\ \hline
$(11 + 5 \sqrt{2})$ & $71$ & $951 + 672 \sqrt{2}$ & $1901.352$ & $1258.25$ \\ \hline
\end{longtable}
\end{minipage}

\end{center}

\section*{Acknowledgments}

We are grateful to M.~Belolipetsky and J.~Voight for helpful comments.
M. Katz is partially supported by ISF grant 1517/12.  M.~M. Schein is
partially supported by GIF grant 1246/2014.  U. Vishne is partially
supported by a BSF grant 206940 and an ISF grant 1207/12.

\end{document}